\theoremstyle{plain}
\newtheorem*{theorem-non}{Theorem}
\newtheorem*{thm}{Main Theorem}
\newtheorem*{lem}{Lemma \ref{2fusion}}
\newtheorem{theorem}{Theorem}[section]
\newtheorem{conj}{Conjecture}
\newtheorem{corollary}[theorem]{Corollary}
\newtheorem{lemma}[theorem]{Lemma}
\newtheorem{proposition}[theorem]{Proposition}
\newtheorem{remark}[theorem]{Remark}
\theoremstyle{definition}
\newtheorem{definition}{Definition}[section]
\newtheorem{example}[theorem]{Example}
\newcommand{\be}[1]{\begin{equation}\label{#1}}
\newcommand{\ee}{\end{equation}}
\numberwithin{equation}{section}
\renewenvironment{proof}[1][\relax]
  {\paragraph{\textbf{Proof}\ifx#1\relax\else~of #1\fi}}%
  {~\hfill$\square$\par\bigskip}
\newcommand{\calF}{\mathcal{F}}
\newcommand{\calG}{\mathcal{G}}
\newcommand{\bbZ}{\mathbb{Z}}
\renewcommand{\be}{\beta}
\newcommand{\khpqd}[2]{\widetilde{Kh}^{\ast}_{\ast}(T_{#1,#2})}
\newcommand{\khpq}[2]{\widetilde{Kh}^{\ast,\ast}(T_{#1,#2})}
\newcommand{\khr}{\widetilde{Kh}}
\renewcommand{\Im}{\textrm{Im}}
\title{An algebra structure for the stable Khovanov homology of torus links}
\begin{document}

\author{Mounir Benheddi}
\address{Universit\'e de Gen\`eve, Section de math\'ematiques, 2-4 rue du Li\`evre, 1211 Gen\`eve 4, Switzerland}
\email{Mounir.Benheddi@unige.ch}
\subjclass[2000]{57M25}  
\keywords{Khovanov homology, torus links, algebra structure, stability}

\begin{abstract}
The family of negative torus links $T_{p,q}$ over a fixed number of strands $p$ admits a stable limit in reduced Khovanov homology as $q$ grows to infinity. In this paper, we endow this stable space with a bi-graded commutative algebra structure. We describe these algebras explicitly for $p=2,3,4$. As an application, we compute the homology of two families of links, and produce a lower bound for the width of the homology of any $4$-stranded torus link.
\end{abstract}

\maketitle

\section*{Introduction} 

Introduced by Khovanov \cite{khovanov2000}, Khovanov homology is a bi-graded homology theory that generalizes the Jones polynomial. More precisely, its graded Euler characteristic gives back the Jones polynomial. It is thus natural to ask oneself the following question:
\medskip

\emph{If there is an explicit formula for the Jones polynomial of a given family of links, is there also an explicit description of the Khovanov homology of that family?}

\medskip
One such example, which will be our focus in this paper, is the family of negative torus links $T_{p,q}$. For $p=2, 3$, their homologies $\khr^{\ast,\ast}(T_{p,q})$ have been computed for all $q$ respectively by Khovanov \cite{khovanov2000} and Turner \cite{turner2008spectral} or Stosic \cite{stovsic2009khovanov}. However such computations are notoriously hard: even for computers, there is a threshold beyond which computations are not possible.
One approach to study these families is to look at another theory, the triply graded homology, which generalizes the HOMFLY-PT polynomial. In this realm, our earlier question has been investigated with much success by Hogancamp et al. (\cite{elias2016computation},\cite{hogancamp2017khovanov} amongst others) or more recently by Mellit \cite{mellit2017homology}. 

Another approach is to regularize the homologies of torus links over a fixed number of strands by increasing the number of twists. The question of the existence for this \emph{stable} Khovanov homology of torus links $\khr^{\ast,\ast}(T_{p,\infty})$ was asked by Dunfield, Gukov and Rasmussen \cite{dunfield2006superpolynomial} and answered positively by Sto\v{s}i\'c \cite{stovsic2007homological}. These stable spaces exhibit very nice properties and are known to generalize the Jones-Wenzl projectors (Rozansky \cite{rozansky2010infinite}). As surprising as it may sound, they are actually easier to compute than their finite counterparts. Moreover there is a beautiful conjecture, due to Gorsky, Oblomkov and Rasmussen \cite{gorsky2013stable}, proved by Hogancamp \cite{hogancamp2015stable} for the triply graded homology. It not only predicts what these spaces should be, but also that they should carry an \emph{extra structure}, that of an algebra. We will follow that approach and restrict our attention to the reduced Khovanov homology with $\bbZ_2$ coefficients. The main feature we use in our work is the functoriality of Khovanov homology. For each $p\geq 2$, we endow $\khr^{\ast,\ast}(T_{p,\infty})$ with an algebra structure that originates from well-chosen sequences of diagrams, or \emph{movies} for short. More precisely we have:

\begin{thm}
	There exists a bi-degree $(0,0)$ map (to be defined)
	\[
	 \widetilde{\Sigma}^p_{\infty}: \khr^{\ast,\ast}(T_{p,\infty}) \otimes \khr^{\ast,\ast}(T_{p,\infty}) \longrightarrow \khr^{\ast,\ast}(T_{p,\infty})
	\]
	that endows $\khr^{\ast,\ast}(T_{p,\infty})$ with a bi-graded commutative unitary algebra structure.
\end{thm}

We will be mainly interested in the $2,3$ and $4$-stranded cases. We have been able to determine the stable algebra for $p=2$, $p=3$. The $p=4$ case is also treated, up to a small indeterminacy. We will show the following, where $\widetilde{Kh}^{\ast}_{\ast}(D)$ denotes the $\delta$-graded version of Khovanov homology:

\begin{theorem-non}[Theorems \ref{Th2inf}, \ref{Algebra3}, \ref{Alg4}]
	There are bi-graded algebras isomorphisms:
	\begin{itemize}
		\item {$\widetilde{Kh}^{\ast}_{\ast}(T_{2,\infty}) \cong \mathbb{Z}_2[x,y]/(x^3=y^2)$.}
		\item {$\widetilde{Kh}^{\ast}_{\ast}(T_{3,\infty}) \cong \mathbb{Z}_2[x,y,z]/(x^2=y^2=0)$.}
		\item {$\widetilde{Kh}^{\ast}_{\ast}(T_{4,\infty}) \cong \dfrac{\mathbb{Z}_2[x,y,z,v,w]}{(x^2=y^2=0, w^2 = \alpha vz^2 + \beta xv^2 )}$,}
		where $\alpha,\beta \in \{0,1\}$.
	\end{itemize}	
	The degrees of the generators are given by
	\[
	|x|=(-2,0), \hspace{0.1cm} |y|=(-3,0), \hspace{0.1cm} |z|=(-4,2), \hspace{0.1cm} |v|=(-6,4), \hspace{0.1cm} |w|=(-7,4).
	\]
\end{theorem-non}

If the first two cases rely heavily on explicit knowledge of $\khr^{\ast,\ast}(T_{2,q})$ and $\khr^{\ast,\ast}(T_{3,q})$ for all $q \geq 1$, we were able to work out the $p=4$ case using only descriptions of $\khr^{\ast,\ast}(T_{4,4})$ and $\khr^{\ast,\ast}(T_{4,5})$. Along the way, we use these structures to compute various homologies and estimate the width of any $4$-stranded torus links. Finally, the reader familiar with the G-O-R conjecture will already notice a difference between our result and the predicted outcome for the $2$ stranded case. This discrepancy shall be addressed.

\subsection*{Outline of the paper}
In Section 1, we give a definition of the chain complex for \emph{reduced} Khovanov homology and explore the naturality of some well-know properties such as the independence from a choice of basepoint, the monoidality with respect to connected sums and the long exact sequence. Once these tools are in place, we present the construction of the stable spaces $\khr^{\ast,\ast}(T_{p,\infty})$. 

In Section 2, we introduce an algebra structure on $\khr^{\ast,\ast}(T_{p,\infty})$ for fixed $p$, as a limit of maps induced by a succession $1$-handle moves on diagrams and compute it explicitly for $p=2$. In a second time, we use tangles to define modules over these algebras.

In Section $3$, we use modules over $\khr^{\ast,\ast}(T_{2,\infty})$ as a shortcut to compute the homology of two families of braid closures as well as $\khr^{\ast,\ast}(T_{3,\infty})$ as an algebra. 

In Section $4$, we focus on the $4$-stranded case, and derive a lower bound on the width of $\khr^{\ast,\ast}(T_{4,q})$ for all $q$ from the algebra structure. Finally we discuss the relationship between our result and the Gorsky-Oblomkov-Rasmussen conjecture. 

In Section $5$, we wrap up loose ends, i.e. we introduce a technique to understand maps induced by $1$-handle moves. That technique will prove very useful for discussing surjectivity of the maps that yield the algebra structures.

\subsection*{Acknowledgements}
The author is grateful to his PhD advisors David Cimasoni and Paul Turner, for their teachings and advices, as well as precious comments on an earlier version. This work is supported by the Swiss FNS.

\section{Reduced Khovanov homology}

In this introductory section, we briefly describe a chain complex for reduced Khovanov homology over $\bbZ_2$ and present selected properties. In a second part, we explore the naturality of these properties. Finally, we define the stable limit $\khpq{p}{\infty}$.

\subsection{Background}
To fix notations, we dedicate this subsection to a construction of the (reduced) Khovanov chain complex over $\bbZ_2$. We follow Turner's Hitchhiker's guide \cite{turner2014hitchhiker}.

Let $D$ be an oriented link diagram. Denote by $\chi_D$ the set of 
crossings of $D$. Any crossing $c \in \chi_D$ can be smoothed in two different ways, described below.

\medskip

\begin{center}
	\includegraphics{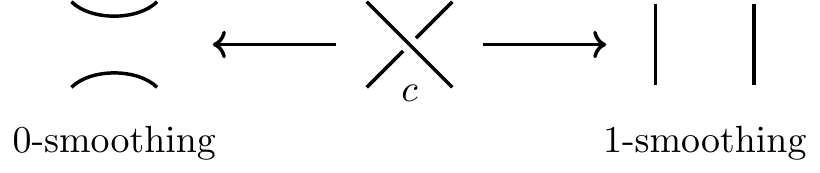}
\end{center}

\begin{definition}
	A \emph{smoothing} $s$ of a diagram $D$ is a map $s: \chi_D \rightarrow \{0,1\}$. By extension, the crossing-less diagram (i.e a collection of circles) obtained from $D$ by replacing each crossing $c$ with the $s(c)$-smoothing will also be called \emph{smoothing}. For a given smoothing $s$, let $| s |$ denote the number of circles in that crossing-less diagram.
\end{definition}

To any subset $A \subset \chi_D$, one can associate a smoothing $s_A$ defined by:
\[
\begin{array}{cccc}
s_A: &  \chi_D & \rightarrow & \{0,1\} \\
& c & \mapsto & 
\begin{dcases*}

0 & if $c \notin A$ \\
1 & if $c \in A$. \\
\end{dcases*}
\end{array}
\]

Thus to any such subset, one can associate a vector space 
\[
V_A := \bbZ_2 \{ x_{\gamma} \hspace{0.1cm}|\hspace{0.1cm} \gamma \mbox{ circle in } s_A\}
\]
and the exterior algebra $\Lambda V_A$ over $V_A$. This algebra is generated by symbols $x_{\gamma_1} \wedge \ldots \wedge x_{\gamma_k}$, such that
$x \wedge x=0$ for any $x \in \Lambda V_A$. Since we work with $\bbZ_2$ coefficients, the equality $x \wedge y = y \wedge x$ holds. Hence we shall supress the wedge notation and adopt a polynomial notation $xy := x \wedge y$. In particular, we now have $x_{\gamma}^2=0$ for any circle $\gamma$.

Before we construct the Khovanov chain complex, let us define one more object.

\begin{definition}
	Let $W= \bigoplus\limits_{i_1, i_2} W^{i_1, i_2}$ be a $\bbZ^{2}$-graded vector space. For $(k_1, k_2) \in \bbZ^2$, the \emph{shifted vector space} $W[k_1, k_l]$ is the $\bbZ^2$-graded vector space defined by
	\[
	W[k_1, k_2] = \bigoplus\limits_{i_1, i_2 \in \bbZ} W^{i_1,i_2}[k_1, k_2], \mbox{ where } W^{i_1, i_2}[k_1,  k_2]:= W^{i_1-k_1, i_2-k_2}.
	\]
\end{definition}

Given an oriented link diagram $D$, let $n_{-}(D)$ (resp. $n_{+}(D)$) be the number of negative (resp. positive) crossings of $D$. The algebra $\Lambda V_A$ can be $\bbZ$-graded: consider the \emph{quantum grading}
	\[
	q: \Lambda V_A \longrightarrow \bbZ
	\]
	defined on monomials as
	\[
	q(x_{\gamma_1}\cdots x_{\gamma_k}) = |s_A| - 2k + |A| + n_{+}(D) - 2n_{-}(D).
	\]
	A monomial $v \in \Lambda V_A$ is said to have \emph{quantum degree} $j$ if $q(v)=j$. We will denote by $\Lambda^q V_A \subset \Lambda V_A$ the subspace generated by monomials with quantum degree $q$, so that 
	\[
	\Lambda V_A =  \bigoplus_{q} \Lambda^q V_A.
	\]
The reader should note that this quantum grading is not the usual grading on the exterior algebra.

We can now turn to the main object of this section, the Khovanov chain complex.
For $i \in \bbZ$, define the vector space
\[
C^{i}(D)= \bigoplus_{\substack{A\subset \chi_D \\ |A| = i + n_{-}(D)}} \Lambda V_A.
\]
Any element $v \in C^{i}(D)$ is said to have \emph{homological degree} $i$. Each $C^i(D)$ inherits a quantum grading from the $\Lambda V_A$'s. An element $v \in C^{i,j}(D)$ is said to have bi-degree $|v|=(i,j)$ if it has homological degree $i$ and quantum degree $j$.

\medskip

The family $\{C^i(D)\}_{i=-n_{-}(D), \ldots, n_{+}(D)} $ can be endowed with a structure of chain complex. Suppose we have $A \subset B \subset \chi_D$, such that $|B| = |A| + 1$. Then the two smoothings $s_A$ and $s_B$ are identical except in a small disk centered around the crossing $c \in B \backslash A$.
Diagrammatically, we have:

\begin{center}
	\includegraphics{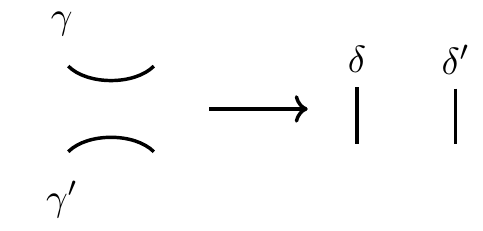}
\end{center}

There are exactly two possible configuration of circles, which allow us to define two maps.
\begin{enumerate}[(i)]
	\item{ If $\gamma \neq \gamma'$, then $\delta = \delta'$. Define the product
		\[
		\begin{array}{cccc}
		m_{A,B}: & \Lambda V_A  & \longrightarrow & \Lambda V_B  \\
		& v & \longmapsto & v, \\
		& x_{\gamma}v,\mbox{ } x_{\gamma'}v & \longmapsto & x_{\delta}v, \\
		& x_{\gamma}x_{\gamma'}v & \longmapsto & 0. \\		         		         
		\end{array}
		\]
	}
	\item{If $\gamma = \gamma'$, then $\delta \neq \delta'$. Define the co-product
		\[
		\begin{array}{cccc}
		\Delta_{A,B}: & \Lambda V_A & \longrightarrow & \Lambda V_B\\
		&  v  & \longmapsto & (x_{\delta} + x_{\delta'})v, \\
		& x_{\gamma}v & \longmapsto & x_{\delta}x_{\delta'}v. \\
		\end{array}            
		\]
	}	
\end{enumerate}

Here we assume $v$ to be an element of $\Lambda V_A$ which contains neither $x_{\gamma}$ nor $x_{\gamma'}$, and extend the maps linearly to $\Lambda V_A$.
Finally we define $d^i:C^i(D) \longrightarrow C^{i+1}(D)$ by extending linearly:
\[
d^i(v) = \sum_{\substack{A\subset B \subset \chi_D \\ |B| =1+ i + n_{-}(D)}} d_{A,B}(v),
\]
where $v \in \Lambda V_A \subset C^i(D)$ and $d_{A,B}$ is either $m_{A,B}$ (if $|s_A|>|s_B|)$ or $\Delta_{A,B}$ (if $|s_A|<|s_B|)$.

\begin{theorem}\cite{khovanov2000}
	The maps $d^i$ have bi-degree $(1,0)$ and for all $i \in \bbZ$ we have $d^{i+1} \circ d^i =0$. Moreover, the isomorphism class of the bi-graded homology $Kh^{\ast, \ast}(D) = H^{\ast}(C^{\ast, \ast}(D), d)$ is an invariant of oriented link diagrams.
\end{theorem}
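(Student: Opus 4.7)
The statement packages three assertions: (a) $d^i$ has bi-degree $(1,0)$, (b) $d^{i+1}\circ d^i = 0$, and (c) the bi-graded homology is independent of the chosen diagram representing the link. I would tackle them in this order, appealing to \cite{khovanov2000} for the invariance step since it is classical.

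For (a), the homological degree is immediate from the definition: each summand $d_{A,B}$ sends $\Lambda V_A \subseteq C^i(D)$ into $\Lambda V_B \subseteq C^{i+1}(D)$ because $|B|=|A|+1$. For the quantum degree, I would take a monomial $v = x_{\gamma_1}\cdots x_{\gamma_k} \in \Lambda V_A$ and plug each of the five defining cases of $m_{A,B}$ and $\Delta_{A,B}$ into the formula $q(x_{\gamma_1}\cdots x_{\gamma_k}) = |s_A| - 2k + |A| + n_{+}(D) - 2n_{-}(D)$. In the multiplication case $|s_B|=|s_A|-1$ and $|B|=|A|+1$, while the number of letters either stays fixed (sending $v,x_\gamma v,x_{\gamma'}v$ to $v,x_\delta v,x_\delta v$) or drops by one ($x_\gamma x_{\gamma'}v\mapsto 0$ trivially preserves $q$); in the co-multiplication case $|s_B|=|s_A|+1$ and the letter count goes up by exactly one. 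A direct check shows $q(d_{A,B}(v))=q(v)$ in all cases.

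For (b), I would group terms of $d^{i+1}\circ d^i$ by triples $A\subsetneq C$ with $|C|=|A|+2$. For each such pair there are exactly two intermediate sets $B_1,B_2$ (obtained by adding the two elements of $C\setminus A$ in either order), and since we work over $\bbZ_2$ the identity $d^2=0$ reduces to
\[
d_{B_1,C}\circ d_{A,B_1} \;=\; d_{B_2,C}\circ d_{A,B_2}.
\]
This is the standard ``commuting cube'' verification: one enumerates the local smoothing pictures near the two resolved crossings, giving a finite list of cases $(m,m)$, $(m,\Delta)$, $(\Delta,m)$, $(\Delta,\Delta)$, each further split according to whether the circles affected at the two crossings coincide or overlap. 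Each configuration is then a direct computation on exterior-algebra generators, using only $x_\gamma^2=0$ and the Frobenius-type relations defining $m$ and $\Delta$.

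For (c), the invariance under Reidemeister moves $R1$, $R2$, $R3$ would be handled by the classical strategy of Khovanov \cite{khovanov2000}: for each move one exhibits explicit chain maps between $C^{\ast,\ast}(D)$ and $C^{\ast,\ast}(D')$ that are mutually inverse up to chain homotopy, by isolating an acyclic subcomplex supported on the smoothings local to the move and identifying the quotient with the complex of the simpler diagram. The bulk of the work (and the real obstacle) lies here, but since the conventions match those of \cite{turner2014hitchhiker} verbatim, I would simply quote the argument there rather than reproduce it, and focus the write-up on points (a) and (b), which are the calculations genuinely requiring the notation just introduced.
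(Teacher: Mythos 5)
The paper does not prove this theorem at all---it simply cites Khovanov's foundational paper and moves on. Your outline is the standard argument from that source (and from Turner's guide, which the paper follows): the bi-degree bookkeeping with $|s_A|\!+\!|A|$ and the monomial length, the face-commutativity check $d_{B_1,C}\circ d_{A,B_1}=d_{B_2,C}\circ d_{A,B_2}$ over $\bbZ_2$, and the Reidemeister invariance via explicit chain homotopy retractions onto acyclic subcomplexes; so your approach is essentially the same as the one the paper defers to.
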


In fact, Khovanov homology is more than an invariant, it is a functor. Let us give the definition of the maps associated with $1$-handle moves. Such a move does not involve any crossing, so $\chi_{D} = \chi_{D'}$. Thus any $A \subset \chi_{D}$ gives rise to two smoothings: $s_A$ for $D$ and $s_A'$ for $D'$, as well as two corresponding vector spaces $\Lambda V_A \subset C^{i}(D)$ and $\Lambda V_A' \subset C^{i}(D')$. Define 
		\[
		\phi_A:\Lambda V_A \rightarrow \Lambda V_A'
		\]
		by the same formula as $m_{A,B}$ or $\Delta_{A,B}$, depending on whether $|s_A| > |s_A'|$ or $|s_A| < |s_A'|$.	Let 
		\[
		\phi^i =  \bigoplus_{\substack{A\subset \chi_D  \\ |A| = i + n_{-}(D)}} \phi_A.
		\]
		The map $\phi^i$ has bi-degree $(0,-1)$ and the family $\{\phi^i\}$ is a chain map. Finally, set
		\[\phi: Kh^{i,j}(D) \rightarrow Kh^{i,j-1}(D')
		\] to be the map induced on homology.

\medskip

Let $(D,p)$ be a pair of an oriented diagram $D$ and a point $p \in D$, which is not a double point. In this context, any smoothing of $D$ is a collection of circles containing exactly one pointed circle. For every $A \subset \chi_D$, let $x_{\bullet}$ be the variable of $\Lambda V_A$ associated to the pointed circle and, in a slight abuse of notations, $x_{\bullet} : C^{\ast,\ast}(D) \rightarrow C^{\ast,\ast - 2}(D)$ be the chain map defined by $x_{\bullet}(v)= x_{\bullet}v$.

It is well know that $x_{\bullet}C(D)=\Im(x_{\bullet})$ is a sub-complex of $(C(D), d)$. The \emph{reduced Khovanov chain complex}, denoted by $\widetilde{C}^{\ast, \ast}(D,p)$, is defined as a shifted version of this sub-complex:
\[
\widetilde{C}^{\ast, \ast}(D,p) =  x_{\bullet}C^{\ast,\ast}(D)[0,1].
\]
The \emph{reduced Khovanov homology} of the pair $(D,p)$ is the vector space defined by
\[
\khr^{\ast, \ast}(D,p) := H_{\ast}(\widetilde{C}^{\ast, \ast}(D,p)).
\] 
The maps induced by $1$-handle in the reduced theory are just given by restriction to the shifted sub-complex $\widetilde{C}^{\ast, \ast}(D,p)$.

\medskip

There is a clear relationship between the unreduced and reduced Khovanov homologies, made explicit by Shumakovitch \cite{shumakovitch2004torsion}, as a split short exact sequence
\begin{center}
	\includegraphics{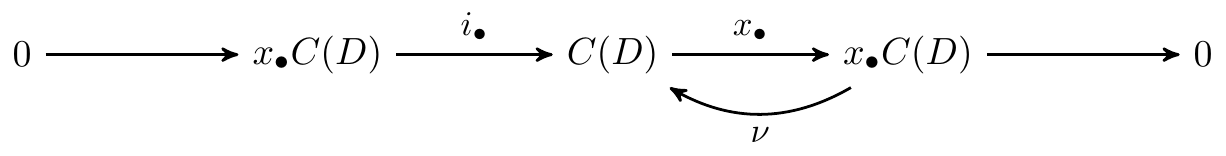}
\end{center}
where $i_{\bullet}$ is the inclusion of subcomplex and $\nu$ is defined below. This exact sequence induces an isomorphism in homology
\[
Kh^{\ast,\ast}(D) \cong \widetilde{Kh}^{\ast,\ast}(D,p)[0,1] \bigoplus \widetilde{Kh}^{\ast,\ast}(D,p)[0,-1].
\] 
This shows independence from the choice of basepoint. Let us recall the definition of Shumakovitch's map $\nu: C^{\ast,\ast}(D) \rightarrow C^{\ast, \ast + 2}(D)$. For $x_{\gamma_1}\cdots x_{\gamma_k} \in \Lambda V_A$, define 
\[
\nu (v) = \sum\limits_{i=1}^{k} x_{\gamma_1}\cdots \hat{x}_{\gamma_i} \cdots x_{\gamma_k},
\] and extend this map linearly to the whole complex. The map $\nu$ is a chain map, and has the property that $\nu \circ \nu = 0$.

\medskip

Let us say a final word about the construction. From a computational point of view, it is often wise to consider another grading, the so-called \emph{$\delta$-grading}. It is defined by the formula
\[
\delta =j-2i.
\]
The associated homology will be denoted by $\khr^{\ast}_{\ast}(D,p)$. An element $v \in \khr^{i}_{\delta}(D,p)$ will have \emph{bi-degree} $(i,\delta)$. In this work we will use the quantum graded version for constructions, and the $\delta$-graded version for computations, so there should be no confusion as to which bi-degree we refer to. This grading allows to introduce a particular kind of diagrams.

\begin{definition}
	Let $V= \bigoplus V^{i}_{\delta}$ be a bi-graded vector space and
	\[
	\delta_{\mbox{ max }} = \mbox{ max } \{ \delta \hspace{0.25cm} | \hspace{0.25cm} V^{\ast}_{\delta} \neq \{0\}\}, \hspace{0.25cm}
	\delta_{\mbox{ min }} = \mbox{ min } \{ \delta \hspace{0.25cm} | \hspace{0.25cm} V^{\ast}_{\delta} \neq \{0\}\}.
	\]
	We define the width of $V$ as the integer
	\[
	w(V) = \frac{1}{2} ( \delta_{\mbox{ max }} - \delta_{\mbox{ min }}) + 1.
	\]
	We say that a diagram $D$ is \emph{thin} if $w(\khr^{\ast}_{\ast}(D))=1$, and \emph{thick} otherwise.
\end{definition}

\subsection{Naturality properties}
Since the functoriality property of Khovanov homology is essential to our work, we need to study various naturality properties. We first how maps induced by $1$-handle moves change with respect to different choices of basepoint.

\begin{lemma}
	Let $D$ be an oriented diagram. For any two choices of basepoints $p$ and $p'$, there is an isomorphism
	\[
	f:\widetilde{C}^{i,j}(D,p) \longrightarrow \widetilde{C}^{i,j}(D,p'),
	\]
	which is natural with respect to maps induced by $1$-handle moves.
\end{lemma}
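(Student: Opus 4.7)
The plan is to exhibit $f$ as the restriction of an explicit involution of the ambient complex $C(D)$ that exchanges the two subcomplexes $x_p C(D)$ and $x_{p'} C(D)$. Concretely, I would define the chain-level operator
\[
 g := \operatorname{id}_{C(D)} + (x_p + x_{p'}) \, \nu,
\]
where $x_p, x_{p'} : C(D) \to C(D)[0,-2]$ are the multiplication-by-pointed-variable chain maps (well defined because the point determines the circle containing it in every smoothing), and $\nu : C(D) \to C(D)[0,2]$ is Shumakovitch's map recalled in Section 1.1. Each of the three constituents of $g$ is a chain map, so $g$ is a chain endomorphism of $C(D)$ of bi-degree $(0,0)$.

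The first step is to verify that $g$ carries $x_p C(D)$ into $x_{p'} C(D)$. Using the derivation identity $\nu(x_{\gamma} w) = w + x_{\gamma}\nu(w)$, which is immediate from the defining formula of $\nu$, one computes for $v = x_p w$ that
\[
 g(v) = x_p w + (x_p + x_{p'}) \bigl( w + x_p \nu(w) \bigr) = x_{p'} w + x_p x_{p'} \nu(w),
\]
and both summands are divisible by $x_{p'}$. The symmetric calculation handles the other direction. Next I would show $g^2 = \operatorname{id}$: summing the char-$2$ relation $\nu x_{\gamma} + x_{\gamma} \nu = \operatorname{id}$ over $\gamma \in \{p,p'\}$ gives $\nu (x_p + x_{p'}) = (x_p + x_{p'}) \nu$, while $(x_p + x_{p'})^2 = 0$ and $\nu^2 = 0$ force $\bigl((x_p + x_{p'})\nu\bigr)^2 = 0$. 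Thus $g$ restricts to mutually inverse chain isomorphisms $x_p C(D) \leftrightarrow x_{p'} C(D)$, and the shift $[0,1]$ delivers the announced bi-degree $(0,0)$ isomorphism $f : \widetilde{C}^{i,j}(D,p) \to \widetilde{C}^{i,j}(D,p')$.

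For the naturality statement, my plan is to check that each of the three constituents of $g$ commutes with the chain map $\phi$ induced by a $1$-handle move. Multiplication by $x_p$ commutes with $\phi$ by a smoothing-by-smoothing check tracking which circle carries the basepoint through the merge or split; the same style of local verification shows $\phi \nu = \nu \phi$, since in every local configuration both sides reduce to the same Frobenius-algebra identity on $\bbZ_2[x]/(x^2)$. Combining these two commutativities with the trivial one for $\operatorname{id}$, I obtain
\[
 \phi \circ g_D = \phi + (x_p^{D'} + x_{p'}^{D'}) \, \nu_{D'} \circ \phi = g_{D'} \circ \phi,
\]
and this descends to the asserted compatibility for $f$.

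The main obstacle I expect is the local case analysis for the commutation of $x_p$ and of $\nu$ with a merge or split: one must separately treat the subcases where both, exactly one, or neither of $p, p'$ lies on a circle affected by the handle, for each of the two handle types. Each individual case reduces to a short computation using $x_{\gamma}^2 = 0$ and the derivation identity above, but the bookkeeping is what makes the proof longer than the formal manipulations suggest.
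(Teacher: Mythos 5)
Your proof is correct and takes essentially the same approach as the paper: the paper defines $f := x_{p'}\circ\nu$ on $x_pC(D)$ with inverse $g := x_p\circ\nu$ on $x_{p'}C(D)$, verifies $f\circ g = g\circ f = \operatorname{id}$ by direct computation, and argues naturality from the fact that $x_\bullet$ and $\nu$ are chain maps commuting with $m$ and $\Delta$. Your globally defined involution $\operatorname{id}+(x_p+x_{p'})\nu$ restricts on $x_pC(D)$ to exactly $x_{p'}w + x_px_{p'}\nu(w) = x_{p'}\nu(x_pw)$, which is the paper's $f$, so the two arguments differ only in packaging.
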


\begin{proof}
	Let $x_{\bullet}$ (resp. $x_{\bullet}'$) be the variable associated to the circles containing $p$ (resp. $p'$). Consider the composite chain maps
	\[
	\begin{array}{c}
	f := x_{\bullet}' \circ \nu : x_{\bullet}C^{\ast, \ast}(D,p) \longrightarrow C^{\ast,j+2}(D) \longrightarrow x_{\bullet}'C^{\ast, \ast}(D,p'), \\
	g := x_{\bullet} \circ \nu : x_{\bullet}'C^{\ast, \ast}(D,p') \longrightarrow C^{\ast,j+2}(D) \longrightarrow x_{\bullet}C^{\ast, \ast}(D,p).
	\end{array}
	\]
	The proof of the equalities $f \circ g = id$ and $g \circ f = id$ is a direct computation. The naturality is a by-product of the fact that both $x_{\bullet}$ and $\nu$ are chain maps: they both commute with the product $m$ and co-product $\Delta$. Since maps induced by $1$-handle moves are made only of such pieces, it follows that they commute with both chain maps.
\end{proof}
	
Apart from the (natural) independence of basepoint, the reduced Khovanov homology has a very interesting property with respect to connected sums of links, which we explore next. Of course, the notion of connected sum of links is usually not well defined, however, with the additional data of basepoints, there is a canonical choice of which components to connect. The original result, due to Khovanov \cite{khovanov2003patterns}, is stated for the non-reduced version as an isomorphism of $A$-modules, where $A=\bbZ[x_{\bullet}]/(x_{\bullet}^2)$:
\[
C(D \sharp D') \cong C(D) \otimes_{A} C(D'),
\]
In our context of reduced homology, the result is slightly different.

\hspace*{-2in}
\begin{figure}
	\hspace*{-1.25in}
	\begin{subfigure}{.5\textwidth}
		\centering
		\includegraphics[scale=0.75]{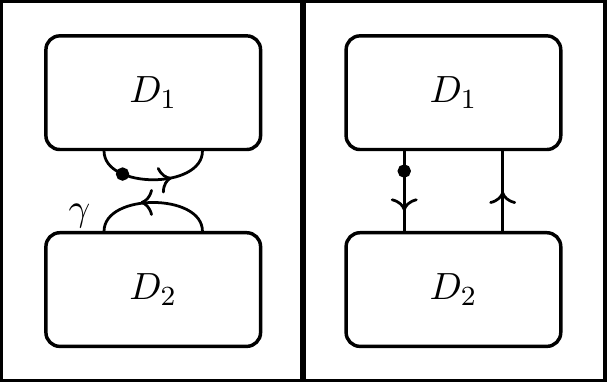}
		\caption{A $1$-handle realizing a connected sum}
		\label{fig:sub1}
	\end{subfigure}
	\begin{subfigure}{.5\textwidth}
		\includegraphics[scale=0.75]{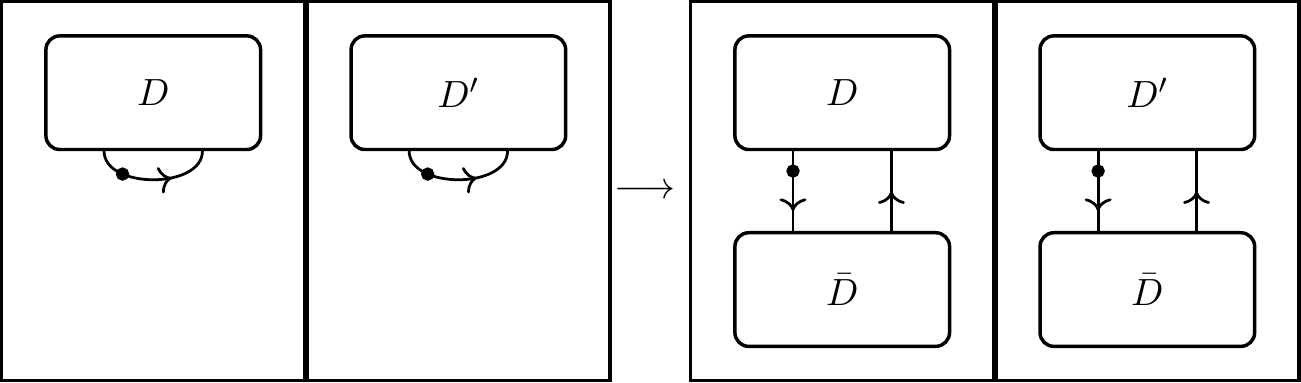}
		\caption{Movies vs connected sum}
		\label{fig:sub2}
	\end{subfigure}
	\caption{Connected sums of pointed links and naturality.}
	\label{fig:test}
\end{figure}

\begin{lemma}\label{CSiso}
	There is an isomorphism with bi-degree $(0,0)$
	\[
	S^{\ast}: \bigoplus_{\substack{i_1+i_2=i \\ j_1+j_2=j}} \khr^{i_1,j_1}(D)\otimes \khr^{i_2,j_2}(D') \longrightarrow \khr^{i,j}(D\sharp D').
	\]
	Moreover we have the following commutative diagram

	\begin{center}
		\includegraphics{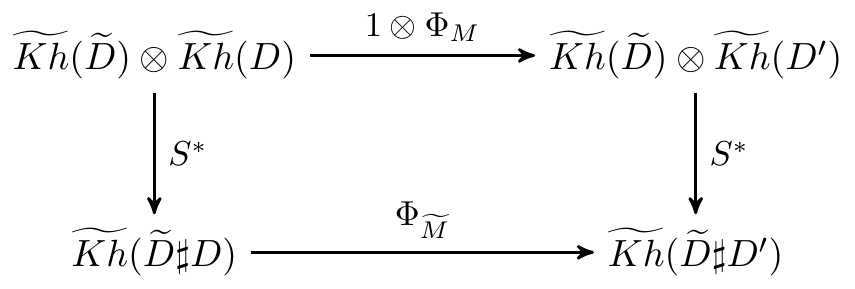}
	\end{center}
	where $\Phi_M$ and $\Phi_{\widetilde{M}}$ are induced by the movies in Figure \ref{fig:sub2}.
\end{lemma}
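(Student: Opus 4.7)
The plan is to construct an explicit chain-level map $S$, verify it is a bi-degree $(0,0)$ isomorphism of complexes, and then invoke the Künneth theorem over $\bbZ_2$ to pass to homology. For the naturality square, I will argue that the movies relating the two sides differ by $1$-handle moves supported in disjoint regions, so the local merge/split formulas commute with $S$ on the nose.

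First, I observe that forming $D \sharp D'$ introduces no new crossings, so $\chi_{D \sharp D'} = \chi_D \sqcup \chi_{D'}$. For any pair $A \subset \chi_D$, $A' \subset \chi_{D'}$, the pointed circles of $s_A$ and $s_{A'}$ are fused into a single pointed circle of $s_{A \sqcup A'}$, giving $|s_{A \sqcup A'}| = |s_A| + |s_{A'}| - 1$. This yields a canonical bijection of reduced summands
\[
S_{A,A'} : x_\bullet \Lambda V_A \otimes_{\bbZ_2} x_\bullet \Lambda V_{A'} \longrightarrow x_\bullet \Lambda V_{A \sqcup A'}, \qquad (x_\bullet v_A) \otimes (x_\bullet v_{A'}) \longmapsto x_\bullet v_A v_{A'},
\]
where $v_A$ and $v_{A'}$ are monomials in non-pointed variables. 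A direct bookkeeping of the quantum formula $q(v) = |s| - 2k + |A| + n_+(D) - 2n_-(D)$, together with the reduced shift $[0,1]$, the additivity of $n_\pm$ under $\sharp$, and the circle-count relation above, shows that $S_{A,A'}$ has bi-degree $(0,0)$. Assembling over all $(A,A')$ defines the candidate chain map $S$.

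To see $S$ commutes with the differentials, I exploit the disjointness $\chi_{D \sharp D'} = \chi_D \sqcup \chi_{D'}$: a smoothing change at a crossing $c \in \chi_D$ affects only circles of the $D$-portion of $s_{A \sqcup A'}$, possibly including the shared pointed circle. A short case analysis, walking through the merging/splitting cases that define $m_{A,B}$ and $\Delta_{A,B}$ (notably the case where the pointed circle itself is split or merged), shows that the corresponding component of $d_{D \sharp D'}$ reads exactly as $d_D \otimes 1$ under $S$; the symmetric statement holds for crossings of $D'$. Hence $d_{D \sharp D'} \circ S = S \circ (d_D \otimes 1 + 1 \otimes d_{D'})$, and $S$ is a chain isomorphism. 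Since $\bbZ_2$ is a field, the Künneth theorem yields the desired bi-graded isomorphism $S^{\ast}$.

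For the naturality diagram, the movies in Figure \ref{fig:sub2} differ only by where one performs a $1$-handle attached away from the connected-sum arc: $\Phi_M$ is the map induced by performing the $1$-handle on, say, the $D$-factor before connecting, while $\Phi_{\widetilde{M}}$ is the same local move performed inside $D \sharp D'$. Because such a move is defined by the same merge/split formulas as the differential, and because the relevant variables all sit on one side of the connected-sum arc, the chain-level square commutes on generators by inspection; passing to homology closes the argument. The main obstacle I expect is the sub-case where the $1$-handle interacts with the pointed circle itself, since there the identification of the merged variable $x_\bullet$ on each side of $S$ has to be tracked carefully; this reduces, however, to exactly the same bookkeeping used for the chain-map property and introduces no new ingredients.
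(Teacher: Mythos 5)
Your proposal is essentially the same chain-level construction as the paper's: your explicit map $(x_\bullet v_A)\otimes(x_\bullet v_{A'})\mapsto x_\bullet v_A v_{A'}$ coincides with the paper's $S=\partial_{\sharp}\circ(1\otimes\nu)$, and the Künneth/field step, degree bookkeeping, and the chain-level naturality argument all match. The one real difference is in how the chain-map property is obtained: the paper packages $S$ as a composite of the connected-sum $1$-handle map $\partial_{\sharp}$ with Shumakovitch's $\nu$ (both already known to be chain maps), which sidesteps the case analysis you flag for merges and splits hitting the pointed circle; your direct formula for $S$ is correct but would have to carry out that case analysis explicitly, so this step should not be left as an assertion.
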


Note that the result is true for any choice of basepoints $p$ and $p'$ for $D$ and $D'$ respectively.
\medskip
\begin{proof}	
	We start with an explicit description of the chain complex for $D\sharp D'$. First notice that $\chi_{D \sharp D'} = \chi_D \cup \chi_{D'}$. Let $A_{\sharp} \subset \chi_{D \sharp D'}$. Such a subset is in 1-1 correspondence with a pair $(A \subset \chi_D, A' \subset \chi_{D'})$ and one sees easily that 
	\[
	\Lambda V_{ A_ {\sharp}} = \Lambda V_A \wedge \Lambda V_{A'} \mbox{ with the relation } x_{\bullet} = x_{\gamma},
	\]
	where $\gamma$ is labelled in Figure \ref{fig:sub1}. Thus for $y \in \Lambda V_{ A_ {\sharp}} \subset (x_{\bullet}{C}(D \sharp D'),d_{\sharp})$ we write 
	\[
	y = vx_{\bullet}w,
	\]
	with $v \in \Lambda V_A, w \in \Lambda V_{A'}$ containing neither $x_{\bullet}$ nor $x_{\gamma}$. The differential $d_{\sharp}$ can also be made explicit. Denote by $d_D$ (resp. $d_{D'}$) be the differential of the complex $\widetilde{C}(D)$ (resp. $\widetilde{C}(D')$). If we add a crossing c to $A_{\sharp}$, we add a crossing to either $A$ or $A'$. The differential $d_{\sharp}$ splits into two pieces: in the first we sum over all crossing that can be added to $A$, in the second over all those that can be added to $A'$. Therefore 
	\[
	d_{\sharp}(y)= d_D(vx_{\bullet})w + vd_{D'}(x_{\gamma}w) = v'x_{\bullet}w + vx_{\bullet}w',
	\]
	where $d_D(vx_{\bullet}) = x_{\bullet}v'$ and $ d_{D'}(x_{\gamma}w)= x_{\gamma}w'$.
	Let $\partial_{\sharp}:x_{\bullet}C^{i_1,j_1}(D) \otimes C^{i_2,j_2}(D') \longrightarrow x_{\bullet}C^{i, j-1}(D \sharp D')$ be the map induced by the $1$-handle in Figure \ref{fig:sub1}. Since anywhere in chain complex, it fuses the pointed circle with the circle labelled $\gamma$, we have
	\[
	\partial_{\sharp}(x_{\bullet}v \otimes w)= vx_{\bullet}w.
	\] 
	Consider the composite chain map
	\[
	S:= \partial_{\sharp} \circ (1 \otimes \nu):  \bigoplus_{\substack{i_1+i_2=i \\ j_1+j_2=j}} x_{\bullet}C^{i_1, j_1}(D) \otimes  x_{\gamma}C^{i_2, j_2}(D') \longrightarrow x_{\bullet}C^{i, j+1}(D \sharp D').
	\]
	More explicitly we have $S(vx_{\bullet}\otimes x_{\gamma}w)= vx_{\bullet}w$. The chain map $S$ is an isomorphism with inverse $h:\widetilde{C}(D \sharp D') \rightarrow \widetilde{C}(D) \otimes \widetilde{C}(D')$ defined by $h(vx_{\bullet}w)=vx_{\bullet} \otimes x_{\gamma}w$. There is one remaining detail to check: the compatibility of $S$ with the shifts we added in the definition of the reduced chain complex, i.e that $S$ has bi-degree $(0,0)$. Recall that we had
	\[
	\widetilde{C}^{\ast, \ast}(D,p) =  x_{\bullet}C^{\ast,\ast}(D)[0,1].
	\]
	We shift each of $x_{\bullet}C^{i_1, j_1}(D)$ and $x_{\gamma}C^{i_2, j_2}(D')$ by $[0,1]$, i.e. a total shift of $[0,2]$ on $x_{\bullet}C^{i, j+1}(D \sharp D')$. By definition of the shifts, we have
	\[
	x_{\bullet}C^{i, j+1}(D \sharp D')[0,2] = x_{\bullet}C^{i, j}(D \sharp D')[0,1].
	\]
	It follows that $S$ induces a bi-degree $(0,0)$ isomorphism as claimed.
	
	\medskip
	
	For the naturality statement, recall that a map of reduced chain complexes $\Phi_M$ send an element $v = x_{\bullet}w$ to $x_{\bullet}\Phi_M(w)$ so the image of $v$ is completely determined by $\Phi_M(w)$. We show that the diagram commutes at the level of chain complexes. Indeed we have
	\[
	\begin{array}{l}
	S\circ (1 \otimes \Phi_M)(vx_{\gamma} \otimes x_{\bullet}w)=S^{\ast} (vx_{\gamma} \otimes x_{\bullet}\Phi_M(w) )= vx_{\bullet}\Phi_M(w). \\
	\Phi_{\widetilde{M}} \circ S^{\ast} (vx_{\gamma} \otimes x_{\bullet}w) = 	\Phi_{\widetilde{M}} (vx_{\bullet}w) =
	vx_{\bullet}\Phi_M(w), \\
	\end{array}
	\]
	where the last equality is due to the fact that the variables in $v$ arise from circles which are fixed by the movie $M$.
\end{proof}

From this point onwards, we won't mention any basepoints since everything we use is independent from their choice, courtesy of the previous lemmas.
We now turn to the main computational tool, namely the long exact sequence associated to a choice of crossing.

\begin{definition}
	Let $D$ be an oriented diagram, and $c$ a crossing in $D$. Denote by $D_0$ (resp. $D_1$) the diagram obtained by $0$-smoothing (resp. $1$-smoothing) $c$. The triple of diagrams ($D_1,D,D_0$) will be called \emph{the exact triple associated to $c$}.
\end{definition}
To any such exact triple, one can associate a short exact sequence of ungraded chain complexes:
\[
0 \longrightarrow \widetilde{C}(D_1) \overset{i}{\longrightarrow} \widetilde{C}(D) \overset{q}{\longrightarrow} \widetilde{C}(D_0) \longrightarrow 0.
\]

One can introduce gradings as follows. If $c$ is a negative crossing, $D_1$ inherits an orientation from $D$. Choose any orientation for $D_0$ and set $w_{-} = n_{-}(D_0) - n_{-}(D)$, $w_{+}=0$. If $c$ is positive, $D_0$ inherits an orientation from $D$. Choose any orientation for $D_1$ and set $w_{-}=0$, $w_{+}= 1 + n_{-}(D_1) - n_{-}(D)$.
Any exact triple gives rise to short exact sequences of chain complexes. More precisely, for each $j \in \bbZ$ we have:
\[
0 \longrightarrow \widetilde{C}^{\ast,j}(D_1)[w_{+},3w_{+}-1] \overset{i}{\longrightarrow} \widetilde{C}^{\ast,j}(D) \overset{q}{\longrightarrow} \widetilde{C}^{\ast,j}(D_0)[w_{-},3w_{-}+1] \longrightarrow 0
\]

Since $\khr$ is a functor, it is natural to ask oneself whether this short exact sequence is also functorial. Such property is standard when considering the \emph{ungraded} chain complexes. However, for the graded version, one still needs to check that all maps respect the various grading shifts. First, we remark that the two maps induced on $\widetilde{C}^{\ast,j}(D_1)$ and $\widetilde{C}^{\ast,j}(D_0)$ are obtained from the movie starting at $D$ ending at $D'$ by replacing the crossing $c$ by its $1$ and $0$ smoothing respectively. Note that one of these might not be an oriented $1$-handle move, as it depends on a choice of orientation.

\begin{lemma}
	Let $D,D'$ be two diagrams related by a $1$-handle move. For any choice of crossing $c$, the map induced by the move also induces a map of the associated short exact sequences.
	\[
	\begin{array}{ccccccc}
	0 \longrightarrow &\widetilde{C}^{\ast,j}(D_1)[w_{+},3w_{+}-1]& \overset{i}{\longrightarrow}& \widetilde{C}^{\ast,j}(D)& \overset{q}{\longrightarrow}& \widetilde{C}^{\ast,j}(D_0)[w_{-},3w_{-}+1]& \longrightarrow 0 \\
	& \Big\downarrow \Phi^1 && \Big\downarrow \Phi && \Big\downarrow \Phi^0& \\
	0 \longrightarrow &\widetilde{C}^{\ast,j-1}(D'_1)[w'_{+},3w'_{+}-1]& \overset{i}{\longrightarrow}& \widetilde{C}^{\ast,j-1}(D')& \overset{q}{\longrightarrow}& \widetilde{C}^{\ast,j-1}(D'_0)[w'_{-},3w'_{-}+1]& \longrightarrow 0. \\
	\end{array}
	\]
\end{lemma}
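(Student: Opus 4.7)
The plan is to verify three things in sequence: that the three vertical maps are honestly induced by the \emph{same} movie, that the induced maps commute with the inclusion $i$ and quotient $q$ at the chain level, and finally that all the grading shifts line up.

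First I would exploit locality. The $1$-handle move defining $\Phi$ is supported in a disk disjoint from every crossing of $D$, so it identifies $\chi_D$ with $\chi_{D'}$ and in particular tracks the distinguished crossing $c$ through. Consequently, applying the $1$-handle to $D_0$ (resp.\ $D_1$) produces $D'_0$ (resp.\ $D'_1$), and these are the maps I call $\Phi^0$ and $\Phi^1$. Recall that $\Phi$ is defined as a direct sum $\bigoplus_{A\subset \chi_D}\phi_A$, and each summand $\phi_A$ acts only on the two circles affected by the handle, which lie outside the local picture at $c$. In particular, $\phi_A$ preserves whether $c \in A$ or not, so the subcomplex generated by subsets containing $c$ (which is exactly $\widetilde{C}(D_1)$, embedded via $i$) is mapped by $\Phi$ into the analogous subcomplex of $\widetilde{C}(D')$, and the quotient behaves symmetrically. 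This is precisely the statement that the two squares commute at the chain level.

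Next I would check that the bi-degree $(0,-1)$ of $\Phi$ is compatible with the shifts $[w_\pm, 3w_\pm\mp 1]$ appearing in the definition of $i$ and $q$. Since the $1$-handle is crossing-free, we have $\chi_D = \chi_{D'}$ together with a bijection preserving signs, whence $n_-(D)=n_-(D')$ and the sign of $c$ is the same in both diagrams. The only remaining freedom lies in the auxiliary choice of orientation for the smoothing ($D_0$ when $c$ is negative, $D_1$ when $c$ is positive). I would choose this orientation on $D'_0$ (resp.\ $D'_1$) to agree with that of $D_0$ (resp.\ $D_1$) outside the support of the handle; since the handle is disjoint from all crossings, this yields $n_-(D_0)=n_-(D'_0)$ and $n_-(D_1)=n_-(D'_1)$, hence $w_\pm = w'_\pm$. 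The shifts therefore cancel between source and target, and the restricted and quotient maps $\Phi^1, \Phi^0$ inherit the bi-degree $(0,-1)$ of $\Phi$.

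Putting the two observations together gives the commutative diagram of short exact sequences in the statement. The mildly delicate point --- and essentially the only one where something could go wrong --- is the compatibility of orientation choices on the smoothings. Once the orientations are chosen consistently across the handle move, everything else is a direct unravelling of the definition of $\Phi$ as a sum of local maps $\phi_A$.
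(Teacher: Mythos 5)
Your overall plan is sound in structure (locality of the handle gives chain-level commutativity; then check the grading shifts), and the first half of your argument is essentially the same as what the paper does. But the final step — "choose the orientation on $D'_0$ to agree with that of $D_0$ outside the support of the handle, which forces $w_\pm = w'_\pm$" — conceals the genuine difficulty rather than resolving it, and this is exactly where the paper does nontrivial work.

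The problem is that such a compatible choice of orientation need not exist. When you pass from $D$ to $D_0$ you change which arcs sit in which link components, so the orientation of the two arcs running through the handle's support is no longer a free local choice: it is dictated by the global orientation of each component of $D_0$. The same holds for $D'_0$, and the $1$-handle itself may merge or split components. It can therefore happen that \emph{no} orientation of $D'_0$ makes the induced handle move from $D_0$ to $D'_0$ an oriented $1$-handle, regardless of what orientation you gave $D_0$. In that case $n_-(D_0)$ and $n_-(D'_0)$ are not related as you claim, and your conclusion $w_- = w'_-$ simply fails. The paper explicitly isolates this subcase ("the induced movie for the $0$-smoothing cannot be oriented") and handles it by introducing an auxiliary diagram $\bar{D}$ obtained from $D_0$ by reinstating a positive crossing $\bar{c}$ at the site of $c$, reading off a new shift $\bar{w}_+$ from the exact triple $(\bar{D}_1 = D'_0, \bar{D}, D_0)$, and verifying through a chain of equalities ($\bar w_+ = w'_- - w_- + 1$, using $n_-(D)=n_-(D')$) that the resulting map $\Phi^0$ still carries the right bidegree once the shift bookkeeping is unwound. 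Your proposal should either reprove an analogue of that computation or explain why the bad case cannot occur in your setup — as written it assumes it away.
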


\begin{proof}
	We only need to check that $\Phi^1$ and $\Phi^0$ have the proper degrees. No matter the sign of the crossing, if both induced movies, i.e. where $c$ is replaced by its two smoothings, can be made into an oriented $1$-handle move by some choice of orientations then $n_{-}(D)=n_{-}(D')$, $n_{-}(D_1)=n_{-}(D_1')$, $n_{-}(D_0)=n_{-}(D_0')$ and it follows that $w_{+}=w_{+}'$ and $w_{-}=w_{-}'$. Thus the induced chain map has the form, for $m=0,1$:
	\[
	\Phi^m:\widetilde{C}^{\ast,j}(D_m) \longrightarrow \widetilde{C}^{\ast,j-1}(D_m').
	\]
	Both appear in the short exact sequence as shifted versions, with domain and co-domain consistently shifted. 
	
	Let us assume that $c$ is negative and that the induced movie for the $0$-smoothing cannot be oriented. First remark that $D_1$ and $D_1'$ inherit the orientations so the induced movie is always oriented and treated as above. The two strands of $D_0$ in the the $1$-smoothing of $c$ must point in the same direction. We consider an intermediate $\bar{D}$, identical to $D_0$ except we replace the $0$-smoothing of $c$ by a positive crossing $\bar{c}$. This produces an exact triple $(\bar{D}_1 = D_0',\bar{D},D_0)$ associated to $\bar{c}$. Let $\bar{w}_{+}$ the corresponding shift. We then have the following sequence of equalities:
	\[
	\begin{array}{ccl}
	\bar{w}_{+} &=& n_{-}(\bar{D}_1) - n_{-}(\bar{D}) + 1 \\
				&=& n_{-}(D_0') - n_{-}(D_0) + 1 \\
				&=& n_{-}(D_0') - n_{-}(D') + n_{-}(D) - n_{-}(D_0) + 1 \\
				&=& w_{-}' - w_{-}+1.
	\end{array}
	\]
	The second equality follows from $\bar{D}_1 = D_0'$ and $n_{-}(\bar{D})=n_{-}(D_0)$ - since $D_0$ and $\bar{D}$ differ only by a positive crossing $\bar{c}$. The third uses $n_{-}(D)=n_{-}(D')$ and the last one follows from the definition of $w_{-}'$ and  $w_{-}$. We now have a graded map 
	\[
	\Phi^0:\widetilde{C}^{\ast,j}(D_0)[0,1] \longrightarrow \widetilde{C}^{\ast+1,j}(D_0')[\bar{w}_{+},3\bar{w}_{+}-1].
	\]
	The previous computation of $\bar{w}_{+}$ yields 
	\[
	[\bar{w}_{+},3\bar{w}_{+}-1] = [ w_{-}' - w_{-}+1,3(w_{-}' - w_{-}+1)-1] = [ w_{-}' - w_{-}+1,3w_{-}' - 3w_{-} +2].
	\]
	There are two steps left. First, we shift both chain complexes by $[w_{-},3w_{-}]$:
	\[
	\Phi^0:\widetilde{C}^{\ast,j}(D_0)[w_{-},3w_{-}+1] \longrightarrow \widetilde{C}^{\ast+1,j}(D_0')[ w_{-}'+1,3w_{-}'+2].
	\]
	Finally, we apply the definition of the shift to change $\widetilde{C}^{\ast+1,j}(D_0')$ into $\widetilde{C}^{\ast,j-1}(D_0')$. This yields:
	\[
	\Phi^0:\widetilde{C}^{\ast,j}(D_0)[w_{-},3w_{-}+1] \longrightarrow \widetilde{C}^{\ast,j-1}(D_0')[ w_{-}',3w_{-}'+1].
	\]
	The map $\Phi^0$ has the proper degree, and we get a map relating the two graded short exact sequences as claimed. The positive case can be treated similarly.
	
\end{proof}
From this short exact sequence, we derive a long exact sequence for each quantum degree $j$. It provides us with a collection of boundary maps, one for each $i \in \bbZ$:
\[
\{ \partial_j: \khr^{i,j}(D_0)[w_{-},3w_{-}+1] \longrightarrow \khr^{i+1,j}(D_1)[w_{+},3w_{+}-1]\}_{i \in \bbZ}
\]
From this data, one can create a chain complex, whose homology is $\khr^{\ast,j}(D)$, and given by
\[
V(D,c,j) := \bigoplus_{\substack{i \in \bbZ}}\khr^{i,j}(D_0)[w_{-},3w_{-}+1]  \bigoplus  \khr^{i+1,j}(D_1)[w_{+},3w_{+}-1], \hspace{0.2cm} d_j= \begin{pmatrix}
0 & \oplus \partial^i_j \\
0 & 0
\end{pmatrix}.
\]
\begin{remark}
	This chain complex is just the $E_1$-page of Turner's spectral sequence \cite{turner2008spectral}. In accordance to this observation, we will call the boundary map $d_j$ a "differential".
\end{remark}

We can add one extra step which is the sum of these chain complexes, over all $j \in \bbZ$. This fact motivates the definition below.

\begin{definition}
	Let $D$ be an oriented diagram and $c$ a crossing of $D$. The \emph{total exact sequence} for $D$ with respect to $c$ is the chain complex whose underlying vector space is
	$V(D,c)= \bigoplus\limits_{j \in \bbZ} V(D,c,j)$ with differential $d=\bigoplus\limits_{j \in \bbZ} d_j$.
	We will say that the total exact sequence \emph{abuts to} or \emph{converges to} $\khr^{\ast,\ast}(D)$.
\end{definition}
Obviously, this total exact sequence fits in a computational framework, so we will mainly consider the $\delta$-graded version
\[
\khr^{\ast}_{\ast}(D_0)[w_{-},w_{-}+1] \bigoplus \khr^{\ast+1}_{\ast}(D_1)[w_{+},w_{+}-1]
\]
converging $\khr^{\ast}_{\ast}(D)$.
We will present it as a table, with homologically graded columns and $\delta$-graded rows. Each entry ($i,\delta$) of the table will be filled with integers indexed by either $0$ or $1$. These integers are the dimensions of $\khr^{i}_{\delta}(D_0)[w_{-},w_{-}+1]$ if the index is $0$, and of $\khr^{i}_{\delta}(D_1)[w_{+},w_{+}+1]$ if the index is $1$. The differential of the total sequence has bi-degree $(1,-2)$ and we will indicate the positions in which it is possibly non-trivial by an arrow from an entry $(i,\delta)$ to an entry $(i+1, \delta-2)$. Before we move on to the main object of this work, let us give an example of use of the total exact sequence. Note that in this example we won't compute a homology but rather reverse-engineer a total exact sequence.

\begin{example}\label{T332q}
	 Let us first introduce a family of links. We call $T_{(3,3),(2,q)}$ the closure of the middle braid below (with the circled crossing), where $q$ denotes the number of negative half-twist on top. These braids will always be equipped with the upwards orientation. In this example, we consider the torus link $T_{3,3}=T_{(3,3),(2,0)}$. Clearly $n_{-}(D)= 6$. Let $c$ be the circled crossing. We obtain an exact triple of diagrams.
	 \begin{center}
	 \includegraphics[scale=0.65]{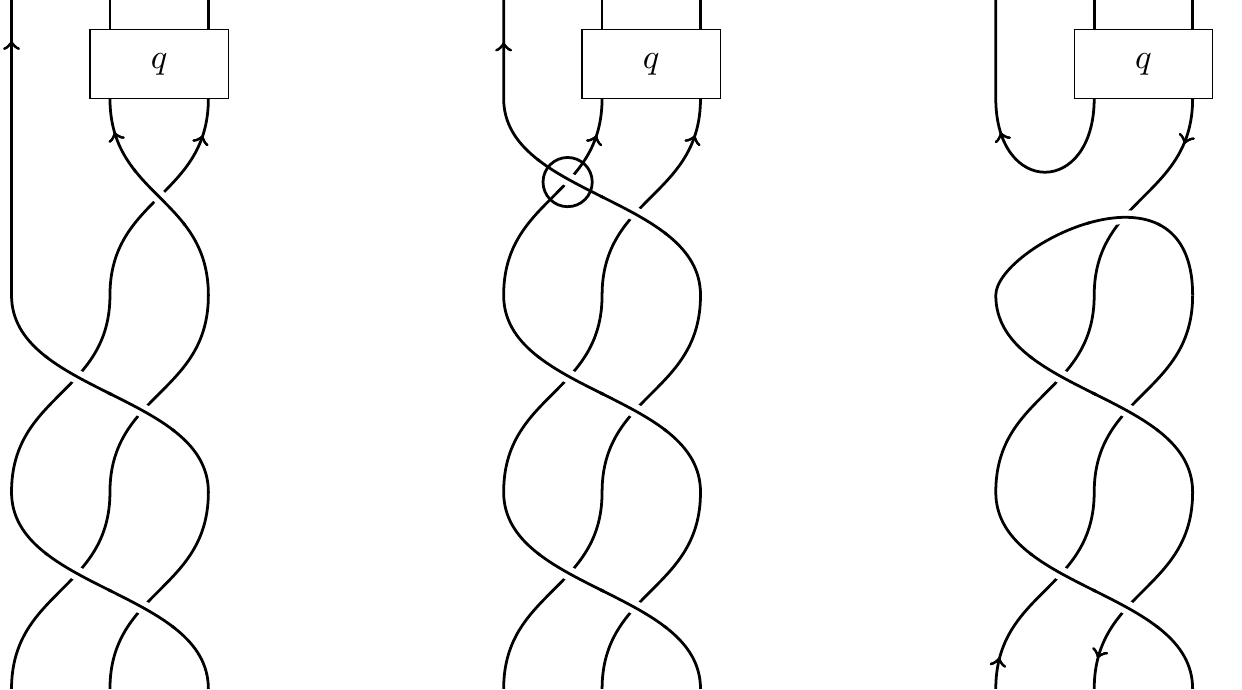}
	\end{center} 
	 One checks easily that for $q=0$, we have an exact triple:
	 \[
	 (D_1, D, D_0) = (T_{2,4}, T_{3,3}, U \coprod U).
	 \]
	 With the orientations prescribed above, $n_{-}(D_0)=2$ thus we have $w_{-}=2-6=-4$ and $w_{+}=0$. Hence our $\delta$-graded total sequence corresponds to the grid below (on the left), while the grid on the right gives $\khr^{\ast}_{\ast}(T_{3,3})$.
	 \begin{center}
	 \includegraphics[scale=0.75]{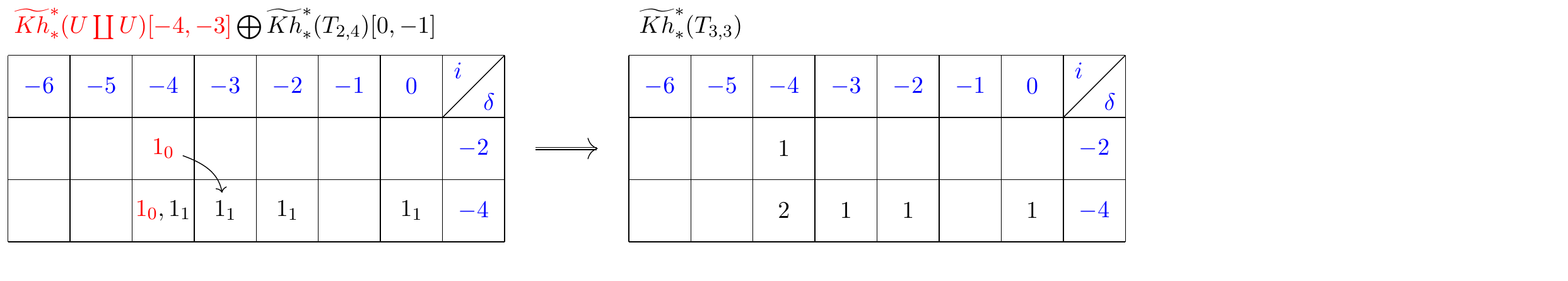}
	 \end{center}
	 It follows that the only possibly non-trivial differential (the arrow in the left hand grid) is in fact trivial.
\end{example}

\subsection{Direct limits in Khovanov homology}
In order to construct the stable homology of torus links, we need to recall some facts about directed systems of chain complexes.

\begin{definition}
	A \emph{directed system of chain complexes} $ \langle V_i,f_{ji} \rangle$ is a family $\{ V_i \}$ of chain complexes, indexed by a partially ordered set $(I, \leq)$, together with chain maps $f_{ji}:V_i \rightarrow V_j $ for each $i \leq j$ such that
	\begin{itemize}
		\item {$f_{ii}$ is the identity of $V_i$.}
		\item {$f_{ki} = f_{kj}  \circ f_{ji}$ for all $i \leq j \leq k$.}
	\end{itemize} 
	To any such system, one can associate another chain complex $V$ defined by 
	\[
	V = \bigoplus V_i \textfractionsolidus \sim.
	\]
	The relation $\sim$ is defined as follows: if $x_i \in V_i, x_j \in V_i$, we have $x_i \sim x_j$ if there exists $k$ such that $f_{ki}(x_i)=f_{kj}(x_j)$. The complex $V$ is called \emph{direct limit}, denoted by $\underset{\longrightarrow}{\lim} \hspace{0.1cm} V_i$. Note that each $V_i$ is equipped with a map $\varphi_i:V_i \rightarrow V$, the \emph{canonical projection}, sending an element to its equivalence class.	
\end{definition}

The direct limit is a functor from the (monoidal) category of directed systems of chain complexes to the (also monoidal) category of chain complexes. Indeed, let  $ \langle V_i,f_{ji} \rangle $ and  $ \langle V'_i,f'_{ji} \rangle $ be two directed systems, and $u_i : V_i \rightarrow V'_i$ a family of chain maps verifying
$f'_{ji} \circ u_i = u_j \circ f_{ji}$, then there is an induced chain map 
\[
u: \underset{\longrightarrow}{\lim} \hspace{0.1cm} V_i \rightarrow \underset{\longrightarrow}{\lim} \hspace{0.1cm} V'_i, \mbox{ such that } \varphi_i'\circ u_i = u \circ \varphi_i
\]
Let us mention some selected properties of that functor.

\begin{proposition}\label{limit}\cite{bourbaki1970algebre}
	
	The $\underset{\longrightarrow}{\lim}$ functor has the following properties
	\begin{enumerate}[(i)]
		\item { The functor $\underset{\longrightarrow}{\lim}$ is exact.
		}
		\item{ The functors $\underset{\longrightarrow}{\lim}$ and $\otimes_{\bbZ_2}$ commute.
		}
		\item{ The functors $\underset{\longrightarrow}{\lim}$ and $H_{\ast}$ commute.
		}		
	\end{enumerate}	
\end{proposition}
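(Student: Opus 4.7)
The three statements are standard properties of filtered colimits in the category of $\bbZ_2$-vector spaces (or chain complexes thereof), and the cited reference to Bourbaki treats them in detail. My plan is to address them in the natural order: first (i), which is the technical heart, then deduce (iii) from (i), with (ii) following by an explicit construction of mutually inverse chain maps. Throughout, I would work with the concrete presentation $\underset{\longrightarrow}{\lim}\, V_i = \bigoplus V_i / \sim$ given in the text rather than invoking the universal property, since this makes the bookkeeping more transparent.

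For (i), given a short exact sequence of directed systems
\[
0 \longrightarrow \langle U_i, g_{ji}\rangle \overset{\{\alpha_i\}}{\longrightarrow} \langle V_i, f_{ji}\rangle \overset{\{\beta_i\}}{\longrightarrow} \langle W_i, h_{ji}\rangle \longrightarrow 0,
\]
the induced diagram on limits is a complex by functoriality. Surjectivity on limits is immediate: a class $[w_i]$ admits a preimage thanks to surjectivity of $\beta_i$. For injectivity at the left, if $\alpha_i(u_i)$ represents the zero class in $\underset{\longrightarrow}{\lim}\, V_i$, then by definition of $\sim$ there is some $j \geq i$ with $f_{ji}\alpha_i(u_i) = 0$; naturality rewrites this as $\alpha_j g_{ji}(u_i) = 0$, injectivity of $\alpha_j$ forces $g_{ji}(u_i)=0$, and hence $[u_i]=0$. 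Exactness in the middle is a parallel chase that uses the cofinality of the indexing set to pass to a common stage where a chosen representative is genuinely a cycle/boundary.

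For (ii), I would exhibit mutually inverse chain maps between $(\underset{\longrightarrow}{\lim}\, V_i) \otimes (\underset{\longrightarrow}{\lim}\, W_i)$ and $\underset{\longrightarrow}{\lim}\,(V_i \otimes W_i)$: one sends $[v_i] \otimes [w_j]$ to $[f_{ki}(v_i) \otimes f_{kj}(w_j)]$ for any common $k \geq i,j$, and the other is the levelwise tensor of the canonical projections. Well-definedness on both sides amounts to a short check against the equivalence relation, and working over the field $\bbZ_2$ removes any flatness obstruction. For (iii), I would deduce the result from (i) by splitting each chain complex $V_i$ into the short exact sequences of cycles and boundaries
\[
0 \to Z_i \to V_i \to B_i[1] \to 0 \quad\text{and}\quad 0 \to B_i \to Z_i \to H_{\ast}(V_i) \to 0,
\]
applying the exact functor $\underset{\longrightarrow}{\lim}$ to each, and identifying $H_{\ast}(\underset{\longrightarrow}{\lim}\, V_i)$ with $\underset{\longrightarrow}{\lim}\, H_{\ast}(V_i)$ through the resulting diagram.

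None of the three claims is deep over a field, and in our applications the indexing set is cofinal in $\bbN$, which keeps everything concrete. The only spot that will require genuine care is the middle-exactness step of (i): one must correctly navigate the relation $\sim$ to produce a preimage at some further stage of the system, and this is the step I expect to spend the most time writing up carefully. Once (i) is in place, items (ii) and (iii) are essentially bookkeeping.
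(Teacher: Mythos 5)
The paper does not actually prove Proposition \ref{limit}; it simply cites Bourbaki and moves on. So there is no ``paper's proof'' to compare against, and your task was really to supply a proof where the authors chose to defer to the literature. That said, your sketch is correct and is the standard argument. The diagram chases for (i) are the right ones --- surjectivity by lifting a representative, left exactness by pushing forward to a stage $j$ where $f_{ji}\alpha_i(u_i)=0$ and invoking naturality, and middle exactness by pushing $v_i$ forward until $\beta_j f_{ji}(v_i)=0$ and then lifting through $\alpha_j$. Deducing (iii) from (i) via the two short exact sequences of graded vector spaces $0\to Z_i\to V_i\to B_i[1]\to 0$ and $0\to B_i\to Z_i\to H_\ast(V_i)\to 0$ is the cleanest route and you have identified it correctly; the only care needed is consistent degree-shift bookkeeping in the first sequence, which you have flagged. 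For (ii), your explicit pair of mutually inverse maps is fine; note though that the word ``flatness'' is a bit of a red herring here, since $-\otimes W$ is a left adjoint and hence commutes with \emph{all} colimits over any ring --- the field hypothesis is not what makes (ii) work, though it does make everything in sight flat and hence harmless. In short: a correct, standard proof supplied where the paper outsources to a reference.
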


Let $D_{p,q}$ be the closure of the standard braid diagram for the negative torus link $T_{p,q}$, with upwards orientation.
For each $p \geq 2$, one can consider the sequence of diagrams pictured below 

\begin{center}
	\includegraphics[scale=0.75]{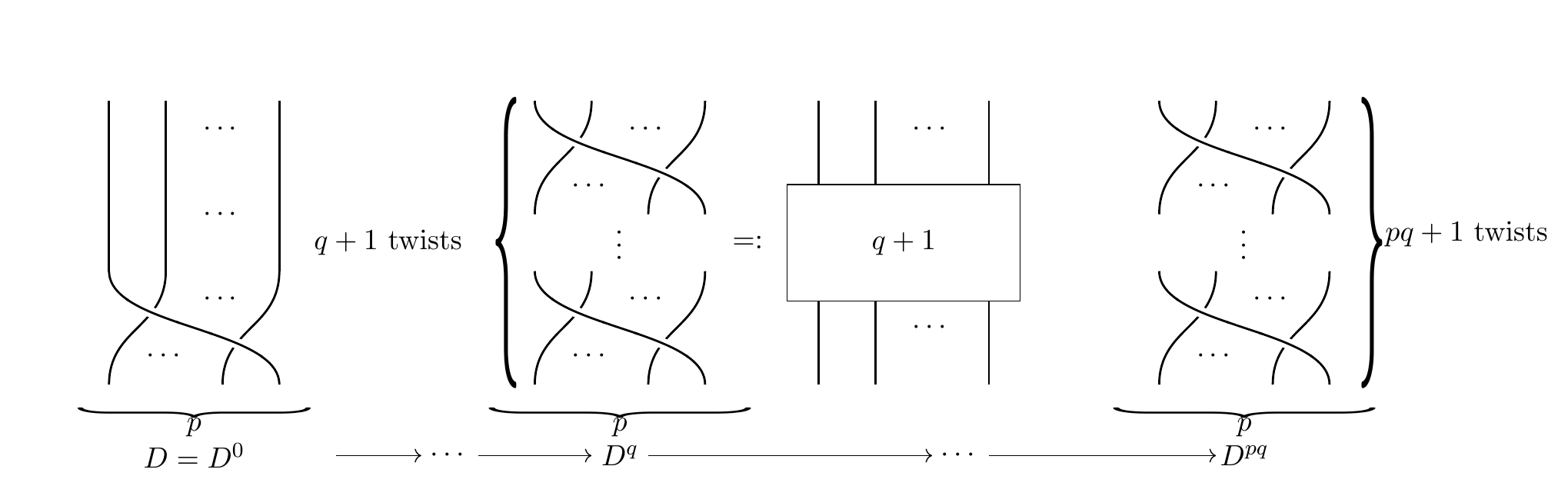}
\end{center}

There is a corresponding sequence of inclusions of (quantum graded) chain complexes:
\[
\cdots \overset{i_{q-1}}{\hookrightarrow} \widetilde{C}(D_{p,q})[0,(p-1)(q-1)] \overset{i_q}{\hookrightarrow} \widetilde{C}(D_{p,q+1})[0,(p-1)q] \overset{i_{q+1}}{\hookrightarrow} \widetilde{C}(D_{p,q+2})[0,(p-1)(q+1)] \overset{i_{q+2}}{\hookrightarrow} \cdots
\]
where $i_q$ is induced by $1$-smoothing top row of $D_{p,q+1}$, producing the diagram $D_{p,q}$. Note that the chain complexes have been shifted so that the inclusions all have (quantum) bi-degree (0,0). We can now form the obvious directed system of chain complexes:
\[
\langle \widetilde{C}(D_{p,q})[0,(p-1)(q-1)] , f_{jq} \rangle, \mbox{ where } f_{jq} = i_{j} \circ i_{j-1} \circ \cdots \circ i_{q} \mbox{ if } q < j, \mbox{ and } f_{qq} = 1.
\]
At the limit, this process yields a chain complex $(\widetilde{C}(D_{p,\infty}), d_{\infty})$ and its homology is denoted by $\widetilde{Kh}^{\ast,\ast}(T_{p,\infty})$. Though this point of view is useful to discuss properties of $\widetilde{Kh}^{\ast,\ast}(T_{p,\infty})$, for obvious reasons of dimension, it is unsuited to computations. More practical is the induced directed system of homologies:
\[
\cdots \overset{i^{\ast}_{q-1}}{\hookrightarrow} \widetilde{Kh}(T_{p,q})[0,(p-1)(q-1)] \overset{i_q}{\hookrightarrow} \widetilde{Kh}(T_{p,q+1})[0,(p-1)q] \overset{i^{\ast}_{q+1}}{\hookrightarrow} \widetilde{Kh}(T_{p,q+2})[0,(p-1)(q+1)] \overset{i^{\ast}_{q+2}}{\hookrightarrow} \cdots
\]
By property ($iii$) of Proposition \ref{limit}, one has the relation 
\[
\widetilde{Kh}^{\ast,\ast}(T_{p,\infty}) = \underset{\longrightarrow}{\lim}\hspace{0.1cm} \widetilde{Kh}^{\ast,\ast}(T_{p,q})[0,(p-1)(q-1)].
\]
There is a corresponding vector space for the $\delta$-graded version, $\widetilde{Kh}^{\ast}_{\ast}(T_{p,\infty})$.
Let us begin our study with the case $p=2$.

\begin{example}\label{2strandedlimit}
The vector space $\widetilde{Kh}^{\ast}_{\ast}(T_{2,\infty})$ can be explicitly described as
	\[
	\widetilde{Kh}^{i}_{\delta} (T_{2,\infty}) = 
	\begin{dcases*}
	\bbZ_{2} & if $\delta=0 $ and $i \in \{0,-2,-3,-4,\ldots \}$ \\
	0 & otherwise.
	\end{dcases*}
	\]
It is well known that for fixed $q \geq 2$, we have a description 
	\[
	\widetilde{Kh}^{i}_{\delta} (T_{2,q}) = 
	\begin{dcases*}
	\bbZ_{2} & if $\delta=-q+1 $ and $i \in \{0,-2,-3,\ldots,-q \}$ \\
	0 & otherwise.
	\end{dcases*}
	\]
Moreover, the shifted inclusions $i_q :\widetilde{Kh}^{i}_{\delta} (T_{2,q})[0,q-1] \longrightarrow \widetilde{Kh}^{i}_{\delta} (T_{2,q+1})[0,q]$ are all injective and have bi-degree $(0,0)$. Thus $\khpqd{2}{\infty}$ is as claimed. Finally the projections
\[
\varphi_q:\widetilde{Kh}^{i}_{\delta} (T_{2,q})[0,q-1] \longrightarrow \widetilde{Kh}^{i}_{\delta} (T_{2,\infty})
\]
are also all injective.
\end{example}

For $p=2$, the stable space is non-trivial. This observation has been generalized for all $p$ by Sto\v{s}i\'c. More precisely, he showed the following theorem, whose statement we adapt to our use of negative torus links.

\begin{theorem}\cite{stovsic2007homological}
	Let $p,q,i$ be integers such that $2 \leq p < q$ and $i > 3 - p - q$. Then for any $j \in \bbZ$, there is an isomorphism
	\[
	H^{i,j}(D_{p,q})  \cong 	H^{i,j}(D_{p,q-1}),
	\] where $H^{\ast,\ast}(D)$ is the unnormalized Khovanov homology of the diagram $D$, and $D_{p,q}$ is the standard diagram for $T_{p,q}$.
\end{theorem}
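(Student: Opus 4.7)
The plan is to apply the long exact sequence of Khovanov homology associated to a chosen crossing of the standard braid diagram $D_{p,q}$, combined with the elementary vanishing bound $H^{i,j}(D') = 0$ whenever $i \notin [-n_-(D'), n_+(D')]$. The strategy is to pick a crossing whose $1$-smoothing (iteratively, if necessary) relates $D_{p,q}$ to $D_{p,q-1}$, and whose $0$-smoothing yields a diagram whose homology vanishes in the range $i > 3-p-q$ after accounting for the bi-degree shifts.

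First I would pick a suitable crossing $c$ of $D_{p,q}$ and identify the smoothed diagrams $D_1$ and $D_0$ up to Reidemeister moves. For the braid word $(\sigma_1 \sigma_2 \cdots \sigma_{p-1})^q$, a single smoothing does not immediately produce $D_{p,q-1}$: as in Example \ref{T332q}, smoothing one crossing of $T_{3,3}$ produced $T_{2,4}$ rather than $T_{3,2}$. The proof would therefore proceed by an iterated application of the long exact sequence, smoothing in turn each of the $p-1$ crossings in, say, the top row of the braid, until the resulting diagram is identifiable with $D_{p,q-1}$ via Reidemeister moves. At each step one obtains an exact triple $(D_1^{(k)}, D^{(k)}, D_0^{(k)})$ together with the shifts $[w_\pm, 3w_\pm \pm 1]$ dictated by the convention of the excerpt.

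For each of these successive long exact sequences, the vanishing bound applied to the error terms $H^{\ast,\ast}(D_0^{(k)})$, combined with the accumulated shifts, should force all connecting maps to vanish whenever $i > 3-p-q$. In that range, each successive long exact sequence produces an isomorphism, and the composition gives the desired $H^{i,j}(D_{p,q}) \cong H^{i,j}(D_{p,q-1})$. The threshold $3-p-q$ will arise from matching $-n_-(D_0^{(k)})$ at the worst intermediate step (where roughly one negative crossing is lost per iteration) against the accumulated shift $w_-$ across at most $p-1$ skein triangles.

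The main obstacle will be the careful bookkeeping of grading shifts through the iterated skein resolution and the Reidemeister destabilizations that simplify each intermediate diagram. Verifying that the cumulative vanishing bound cuts off exactly at $i > 3-p-q$ requires tight control over $n_-(D_0^{(k)})$ after Reidemeister simplification, as well as over the orientation-dependent shifts $w_-$ at each step. A secondary difficulty lies in arranging the iteration scheme so that the terminal diagram really is $D_{p,q-1}$ (rather than some other torus braid related by further moves), and in verifying that the composite isomorphism is genuinely a bi-degree $(0,0)$ map on the original bi-graded homology.
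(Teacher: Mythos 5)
The paper cites this theorem directly from Sto\v{s}i\'c and gives no in-paper proof, so there is no internal argument to compare your sketch against. At the level of strategy your outline does echo Sto\v{s}i\'c's original proof (iterated skein long exact sequences with vanishing of the error terms), but the way you propose to extract the bound $i > 3-p-q$ contains a genuine gap.

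You intend to kill the connecting maps by applying the elementary estimate $H^{i,j}(D')=0$ for $i<-n_-(D')$ to the $0$-smoothings $D_0^{(k)}$. As drawn, before any Reidemeister simplification, each $D_0^{(k)}$ retains on the order of $(p-1)q$ crossings (with $q>p$, most of them still negative), so this estimate --- even after incorporating the shift $w_-$ --- only gives vanishing for $i$ roughly below $-(p-1)q$, which is far more negative than $3-p-q$. Concretely, at $p=3$, $q=4$ the theorem demands stability for $i>-4$, whereas the unreduced crossing count on $D_0$ would only yield vanishing near $i<-7$. The step you defer as ``careful bookkeeping'' is therefore the heart of the matter: one must simplify each $D_0^{(k)}$ via Reidemeister moves to the closure of a braid on fewer strands (or a connected sum of such), and then invoke a sharp degree bound for those simpler diagrams, typically by an induction that decreases $p$ as well as $q$. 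The paper's example with $T_{3,3}$ illustrates exactly this phenomenon: the $0$-smoothing there collapses to a two-component unlink, and the entire homological content is carried by the shift $w_-=-4$, not by the crossing number of the unreduced smoothing. Without that identification and the resulting double induction, the naive $n_-$ bound on $D_0^{(k)}$ cannot reach the stated threshold.
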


In order to understand these vector spaces, we will upgrade them to algebras, through the use of movies made of successive $1$-handles.

\section{The algebra structures}

This section's main focus is the definition of an algebra structure for $\khpq{p}{\infty}$ and its explicit description for the $2$-stranded case. In a second part, we associate to tangles some module structures over (some) $\khpq{p}{\infty}$.

\subsection{Definition of the algebra structures}

Consider two braids $\be, \be'$ over $p$ strands. There is an obvious movie starting at the connected sum of their closures $\hat{\be}, \hat{\be'}$ and ending at the closure of their composition (as braids). For example, if $p=3$, and $\hat{\be}=T_{3,q},\hat{\be'}=T_{3,q'}$, then $\widehat{\be\circ \be'}= T_{3,q+q'}$ and the movie is pictured in Figure \ref{FusionCob}.
\begin{center}
	\includegraphics{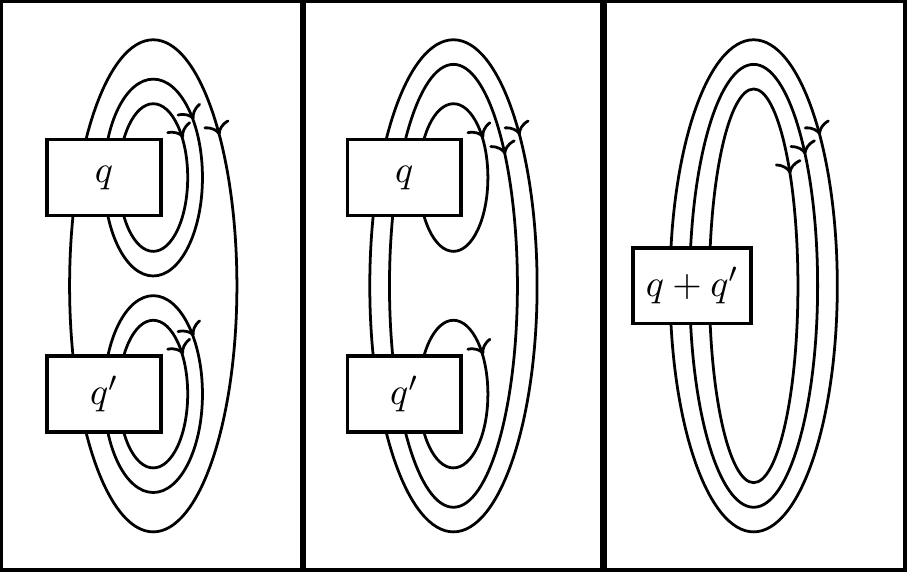}
	\captionof{figure}{The fusion movie for $p=3$}\label{FusionCob}
\end{center}

For any $p \geq 2$, the vector space $\widetilde{Kh}(T_{p,\infty})$ can be endowed with a structure of algebra, which we describe now. Recall that any movie made of successive $1$-handles produces a chain maps $\Phi^1, \Phi$ such that the following diagram commutes:
\[
\begin{array}{ccc}
\widetilde{C}(D_1) & \longrightarrow & \widetilde{C}(D) \\
\big\downarrow \Phi^1 & & \big\downarrow \Phi \\
\widetilde{C}(D'_1) & \longrightarrow & \widetilde{C}(D')
\end{array}
\]
where $D_1$ (resp. $D'_1$) is obtained from $D$ (resp. $D'$) by $1$-smoothing of a crossing $c$ and the horizontal arrows are the inclusions of the corresponding sub-complexes.
Define 
\[
\Sigma^p_{q,q'}: \widetilde{C}(D_{p,q} \sharp D_{p,q'}) \longrightarrow \widetilde{C}(D_{p,q+q'})
\]
as the map of chain complexes with bi-degree $(0,-p + 1)$ induced by the fusion movie over $p$ strands. Thus in homology we get a commutative ladder
\[
\begin{array}{ccccc}
\cdots \overset{i^{\ast}_{q-1}}{\longrightarrow} & \widetilde{Kh}(T_{p,q} \sharp T_{p,q'})& \overset{i_q}{\longrightarrow} & \widetilde{Kh}(T_{p,q+1}\sharp T_{p,q'})& \overset{i^{\ast}_{q+1}}{\longrightarrow} \cdots \\
& \big\downarrow \Sigma^p_{q,q'}& & \big\downarrow \Sigma^p_{q+1,q'} & \\
\cdots \overset{i^{\ast}_{q+q'-1}}{\longrightarrow} & \widetilde{Kh}(T_{p,q+q'})& \overset{i_{q+q'}}{\longrightarrow} & \widetilde{Kh}(T_{p,q+q'+1})& \overset{i^{\ast}_{q+q'+1}}{\longrightarrow} \cdots \\
\end{array}
\]
There is a similar ladder if $q'$ increases instead of $q$. Now, consider the composite
\[
\widetilde{\Sigma}^p_{q,q'}: \khr^{i,j}(T_{p,q}) \otimes \khr^{i',j'}(T_{p,q'}) \overset{S^{\ast}}{\longrightarrow} \khr^{i+i', j+j'}(T_{p,q} \sharp T_{p,q'}) \overset{\Sigma^p_{q,q'}}{\longrightarrow} \khr^{i+i',j+j'-p+1}(T_{p,q+q'})
\]

In order to match the directed system which produces $\widetilde{Kh}^{\ast,\ast}(T_{p,\infty})$, let us shift $\widetilde{Kh}^{\ast,\ast}(T_{p,q})$ by $[0,(p-1)(q-1)]$ and $\widetilde{Kh}^{\ast,\ast}(T_{p,q'})$ by $[0,(p-1)(q'-1)]$, thus performing a total shift of $[0,(p-1)(q+q'-2)]$. If we shift the co-domain accordingly, by definition of the shifts we have
\[
\khr^{i+i',j+j'-p+1}(T_{p,q+q'})[0,(p-1)(q+q'-2)]=\khr^{i+i',j+j'}(T_{p,q+q'})[0,(p-1)(q+q'-1)].
\]
Hence the shifted version of the map $\widetilde{\Sigma}^p_{q,q'}$ has degree $(0,0)$:

\begin{center}
	\includegraphics{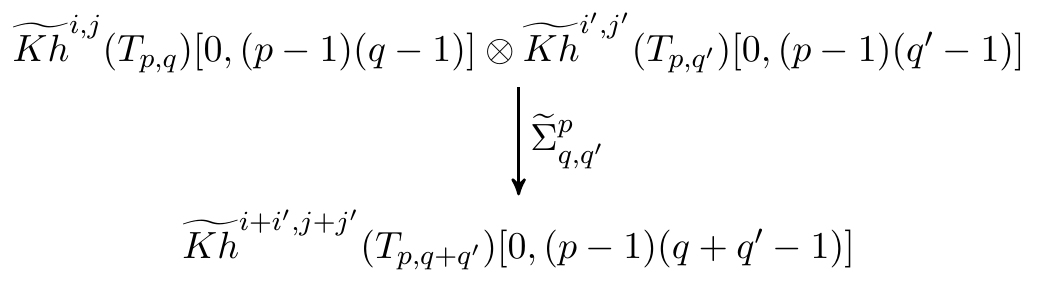}
\end{center}

Let us briefly use the notation $A_q := \khr^{\ast,\ast}(T_{p,q+1})[0,(p-1)(q-1)]$ and consider the following diagram, where all squares and triangles commute. For the horizontal faces, commutativity follows from the definition of the maps involved. For the vertical faces of the cubes where one parameter ($q$ or $q'$) is fixed, commutativity follows from that of the previous ladders. Hence the last face, where both parameters $q$ and $q'$ increase, also commutes.

\begin{center}
	\includegraphics{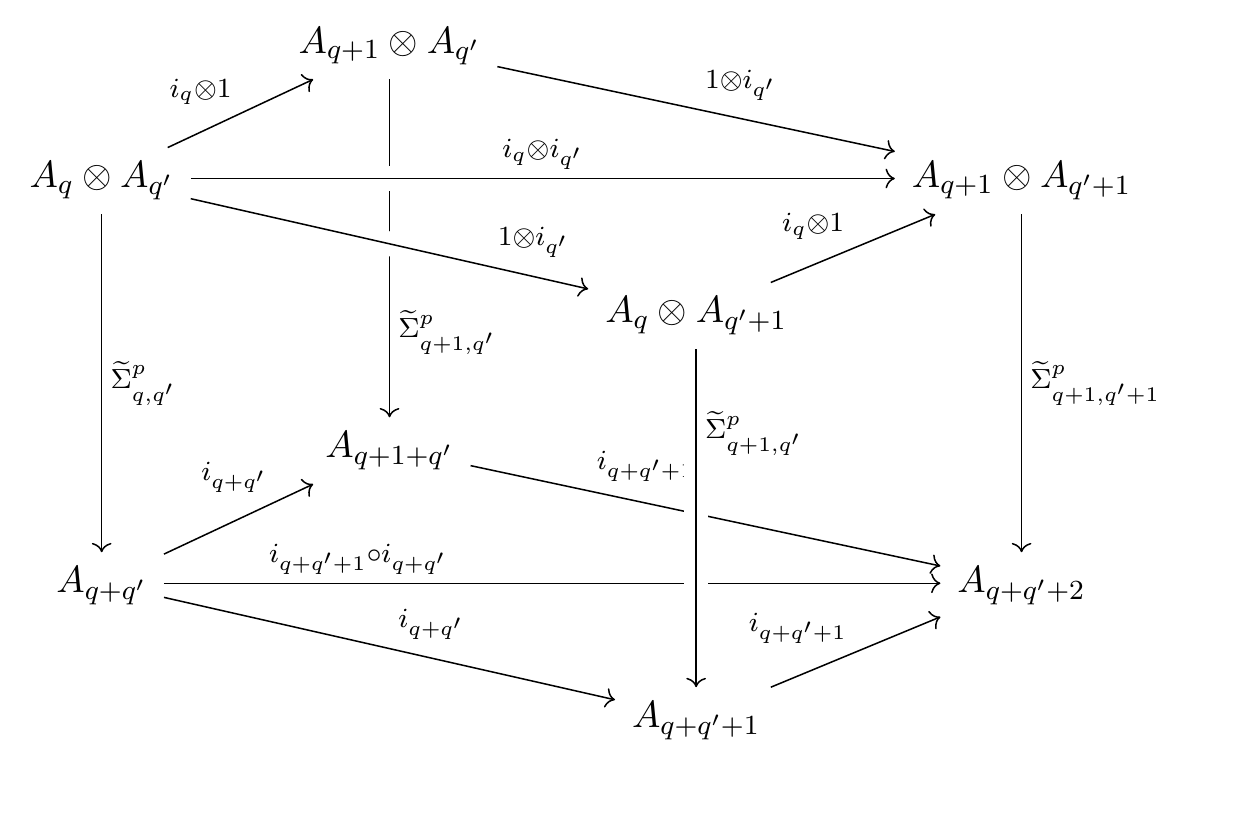}
\end{center}

The commutativity makes $\widetilde{\Sigma}^p_{q,q'}$ into a map of directed systems with degree (0,0):
\[
\widetilde{\Sigma}^p_{q,q'}: \langle A_{q} \otimes A_{q'}, i_{q} \otimes i_{q'} \rangle \longrightarrow \langle A_{q+q'}, i_{q+q'+1} \circ i_{q+q'}\rangle
\] 
The direct limit functor, together with property $(ii)$ of Proposition \ref{limit}, produces a map:
\[
\widetilde{\Sigma}^p_{\infty}: \khr^{i,j}(T_{p,\infty}) \otimes \khr^{i',j'}(T_{p,\infty}) \longrightarrow \khr^{i+i',j+j'}(T_{p,\infty}),
\]

which satisfies the relation $\widetilde{\Sigma}^p_{\infty} \circ (\varphi_q \otimes \varphi_{q'}) = \varphi_{q+q'} \circ \widetilde{\Sigma}^p_{q,q'}$. 

We will refer to the maps $\widetilde{\Sigma}^p_{\infty}$ and their finite counterparts $\widetilde{\Sigma}^p_{q,q'}$ as \emph{fusion maps}. We are now ready to state and prove the main theorem of this section, namely that the fusion maps give additional structure to the stable spaces. Given $a,b \in \khpq{p}{\infty}$, define their \emph{product} to be $a \cdot b := \widetilde{\Sigma}^p_{\infty}(a \otimes b)$.

\begin{theorem}\label{algstruc}
	For each $p\geq 2$, the fusion map $\widetilde{\Sigma}_{\infty}^p: \khpq{p}{\infty} \otimes \khpq{p}{\infty} \longrightarrow \khpq{p}{\infty}$ endows the stable space $\khpq{p}{\infty}$ with a structure of commutative bi-graded unitary algebra.
\end{theorem}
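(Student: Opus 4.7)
The strategy is to verify commutativity, associativity, and existence of a unit at the finite stage $\widetilde{\Sigma}^p_{q,q'}$, and then transport these properties through the direct-limit construction. The relation $\widetilde{\Sigma}^p_{\infty} \circ (\varphi_q \otimes \varphi_{q'}) = \varphi_{q+q'} \circ \widetilde{\Sigma}^p_{q,q'}$ recorded just above the statement, together with the fact that every element of $\khpq{p}{\infty}$ lies in the image of some $\varphi_q$, reduces each algebraic identity in the limit to a statement at finite level valid up to one additional inclusion $i^{\ast}$, which suffices in the colimit by the defining equivalence relation. Bi-gradedness is already built into the construction, since each $\widetilde{\Sigma}^p_{q,q'}$ was arranged to be of bi-degree $(0,0)$ after the shifts $[0,(p-1)(q-1)]$ and $[0,(p-1)(q'-1)]$ on the factors.

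For commutativity, fix $a\in\khr^{\ast,\ast}(T_{p,q})$ and $b\in\khr^{\ast,\ast}(T_{p,q'})$. The two movies realizing $\widetilde{\Sigma}^p_{q,q'}(a \otimes b)$ and $\widetilde{\Sigma}^p_{q',q}(b \otimes a)$ have the same target $T_{p,q+q'}$, while their sources $T_{p,q}\sharp T_{p,q'}$ and $T_{p,q'}\sharp T_{p,q}$ are related by an ambient isotopy of $S^2$ swapping the two summands across the connected-sum disk. The induced isomorphism on reduced homology matches the symmetry of the connected-sum isomorphism $S^{\ast}$ of Lemma \ref{CSiso}, which in the exterior-algebra description of the proof amounts to the equality $vx_{\bullet}w=wx_{\bullet}v$ over $\bbZ_2$. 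After this swap, the two fusion movies become isotopic as movies on $S^2\times[0,1]$, so by functoriality of Khovanov homology they induce the same map on homology.

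For associativity, the fusion movie from $T_{p,q}\sharp T_{p,q'}\sharp T_{p,q''}$ to $T_{p,q+q'+q''}$ decomposes in two ways, according to whether one first fuses the left two summands or the right two. The two decompositions differ only by the order in which two batches of $p$ parallel $1$-handles are attached; since these batches lie in disjoint horizontal bands of the braid diagram, reordering them produces an isotopic movie, hence the same chain map on the reduced complex. Combined with the associativity of $S^{\ast}$, this gives associativity of the shifted finite product, and passing to the limit concludes.

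For the unit, observe that $T_{p,1}$ is the unknot, so $\khr^{\ast,\ast}(T_{p,1})$ is one-dimensional and concentrated in bi-degree $(0,0)$; let $\mathbf{1}_1$ denote a generator and set $e:=\varphi_1(\mathbf{1}_1)\in\khpq{p}{\infty}$. To see that $e$ is a left unit I would show that $\widetilde{\Sigma}^p_{1,q}(\mathbf{1}_1\otimes a)=i_q(a)$ for every $a\in\khr^{\ast,\ast}(T_{p,q})$, after accounting for the $[0,p-1]$ shift on the target. Diagrammatically the fusion movie from $T_{p,1}\sharp T_{p,q}$ to $T_{p,q+1}$, applied to a class whose $T_{p,1}$-component is $\mathbf{1}_1$, collapses via the cancellation of the unknotted summand against the fusion $1$-handle to the sequence of $1$-handles realizing the inclusion $i_q$. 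Consequently $e\cdot\varphi_q(a)=\varphi_{q+1}(i_q(a))=\varphi_q(a)$, and commutativity upgrades this to a two-sided unit. The main obstacle will be precisely this last step: carefully identifying two a priori different movies on $S^2\times[0,1]$ while tracking the quantum shifts through the fusion cobordism and the inclusion; commutativity and associativity, by contrast, reduce to elementary diagrammatic observations.
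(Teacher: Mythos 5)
Your overall framework — verify at the finite level, push through the colimit via $\widetilde{\Sigma}^p_{\infty}\circ(\varphi_q\otimes\varphi_{q'})=\varphi_{q+q'}\circ\widetilde{\Sigma}^p_{q,q'}$ — is exactly the paper's, and your associativity and bi-gradedness arguments match (the paper cites Jacobsson's theorem on exchanging distant critical points to make the "disjoint bands" observation precise; over $\bbZ_2$ the sign ambiguity disappears). However, there are two places where you diverge and at least one genuine gap.

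\textbf{Unit.} You assert that for $\mathbf{1}_1\otimes a$ the fusion movie "collapses\ldots to the sequence of $1$-handles realizing the inclusion $i_q$." This is not correct: $i_q$ is \emph{not} a cobordism-induced map. It is the inclusion of a sub-complex coming from $1$-smoothing the top row of crossings of $D_{p,q+1}$, i.e.\ it is a skein map, not a map induced by $1$-handle moves, and it is not realizable as a sequence of $1$-handles. So your stated geometric cancellation does not produce $i_q$ and the identity $\widetilde{\Sigma}^p_{1,q}(\mathbf{1}_1\otimes a)=i_q(a)$ is left unproved. The paper's argument instead works at the chain level: it writes the fusion map restricted to the unknot slot as a composite of products $m_{A,B}$ whose $D_{p,1}$-side circles all carry $1\in\Lambda V_A$, so the map fixes the $T_{p,q}$-variables; this gives $\widetilde{\Sigma}^p_{1,q}(x_{\bullet}\otimes x_{\gamma}v)=x_{\bullet}v$ and hence that $e$ is a unit. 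You would need to replace your geometric claim with some version of this computation, or give a genuinely different argument identifying $\widetilde{\Sigma}^p_{1,q}(\mathbf{1}_1\otimes -)$ with $i_q$ in the colimit.

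\textbf{Commutativity.} You argue that the two fusion movies, after the ambient isotopy swapping the two connect-summands, are isotopic as cobordisms (you write $S^2\times[0,1]$, presumably meaning $S^3\times[0,1]$) and conclude by the functoriality of Khovanov homology with respect to isotopy classes of cobordisms. Over $\bbZ_2$ this functoriality is available, so the strategy is legitimate, but the key topological claim — that the two fusion cobordisms are isotopic rel the target boundary — is asserted without checking and is not entirely obvious (the attaching regions of the $p-1$ fusion $1$-handles sit at different spots of the diagram before and after the swap). The paper takes a more elementary route: it labels the $p$ closing strands of each summand from outside to inside, notes that each fusion $1$-handle either merges or splits circles determined purely by these labels, and observes this combinatorics is symmetric in the two summands; no cobordism isotopy is invoked. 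If you want to keep your geometric argument, you should either exhibit the isotopy or cite a precise movie-move reduction; otherwise the paper's chain-level count is the cleaner way to close this step.
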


\begin{proof}
	First remark that all maps involved are linear so any distributivity property will hold. The associativity follows from Jacobsson's result (\cite{jacobsson2004invariant}, Theorem 2). This theorem states that two movies which differ only by an exchange of distant critical points induce maps that agree up to sign (for integer coefficients). With $\bbZ_2$ coefficients, two such maps must agree. Therefore, for any choice of $q,q',q''$, we have an equality
	\[
	\widetilde{\Sigma}^p_{q,q'+q''} \circ (1 \otimes \widetilde{\Sigma}^p_{q',q''}) =	\widetilde{\Sigma}^p_{q+q',q''} \circ (\widetilde{\Sigma}^p_{q,q'} \otimes 1),
	\]
	that translates through the $\underset{\longrightarrow}{\lim}$ functor into:
	\[
	\widetilde{\Sigma}^p_{\infty} \circ (1 \otimes \widetilde{\Sigma}^p_{\infty}) = \widetilde{\Sigma}^p_{\infty} \circ (\widetilde{\Sigma}^p_{\infty} \otimes 1).
	\]
	Thus we have associativity of our product. Also, since the map $\widetilde{\Sigma}_{\infty}^p$ has bi-degree $(0,0)$, our structure will be bi-graded. We are left with proving the existence of a neutral element and the commutativity. Fix $p\geq 2$ and consider the diagram $D_{p,1}$, the unknot . We work at the chains level in homological degree $0$ since $\khr^{i,j}(D_{p,1})=0$ for $(i,j) \neq (0,0)$. The corresponding smoothing $s_A$ is a collection of $p$ circles. Since nothing depends on the choice of basepoint, we can choose it to lie on the outermost strand, so that it will be fixed by the movie. One checks easily that $\khr^{0,0}(D_{p,1})$ is generated by $x_{\bullet}$. We aim to show that, for any $q \geq 1$, the fusion map
	\[
	\widetilde{\Sigma}^p_{1,q}: \khr^{0,0}(T_{p,1}) \otimes \khr^{i',j'}(T_{p,q}) \longrightarrow\khr^{i',j'-p+1}(T_{p,q+1})
	\]
	sends $x_{\bullet} \otimes x_{\gamma}v$ to $x_{\bullet}v$. Since $S(x_{\bullet} \otimes x_{\gamma}v)=x_{\bullet}v$, we need only show that $x_{\bullet}v$ is fixed by $\Sigma^p_{1,q}$.
	The fusion map is given by a succession of products $m_{A,B}$. The circles involved in these products for the $D_{p,1}$ piece are all equipped with $1 \in \Lambda V_A$. Hence our map acts trivially on $v$. 
	
	Commutativity also follows from a detailed look at the chain complex. We start with the connected sum of two braid closures. Each one is equipped with $p$ closing strands, that we label $1$ to $p$ starting from the outside. Any smoothing inherits this numbering (note that one circle may have multiple labels). The fusion map acts on the strands while preserving the numbering. In order of labels, the map either fuses the circles labelled $i$ if they are different or splits the circle with two labels $i$. The fact that we have either 1 or 2 circles does not depend on the position of our braid closures, thus we have commutativity.
\end{proof} 

\subsection{$2$-stranded torus links}
We study the algebra structure associated to the family of $2$-stranded torus links. We will rely on our explicit knowledge of $\khpqd{2}{\infty}$ (described in Example \ref{2strandedlimit}). The main argument is contained in Lemma \ref{2fusion} whose proof we delay to Section 5. Note that since we are now in the realm of computations, we will use the $\delta$-graded version of Khovanov homology.

\begin{lemma}\label{2fusion}
	For any $q \geq 3$, the fusion maps
	\[
	\begin{aligned}
	\widetilde{\Sigma}_{2,q}^2: \widetilde{Kh}^{\ast}_{\ast}(T_{2,2}) \otimes \widetilde{Kh}^{\ast}_{\ast}(T_{2,q}) \longrightarrow \widetilde{Kh}^{\ast}_{\ast}(T_{2,2+q}), \\
	\widetilde{\Sigma}_{3,q}^2: \widetilde{Kh}^{\ast}_{\ast}(T_{2,3}) \otimes \widetilde{Kh}^{\ast}_{\ast}(T_{2,q}) \longrightarrow \widetilde{Kh}^{\ast}_{\ast}(T_{2,3+q})
	\end{aligned}
	\]
	are surjective.
\end{lemma}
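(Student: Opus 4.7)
The plan is to combine a chain-level description of the fusion map with an induction on $q$ driven by the naturality of the fusion with respect to the shifted inclusions $i^\ast_q$.

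First I would fix canonical cycle representatives for the $q$ generators of $\widetilde{Kh}^\ast_\ast(T_{2,q})$. Since this homology is free of rank $q$, is supported in homological degrees $\{0,-2,-3,\ldots,-q\}$ and is concentrated on the single $\delta$-diagonal $\delta = -q+1$, explicit representatives can be chosen as (sums of) monomials over carefully chosen subsets $A_k \subset \chi_{D_{2,q}}$ of size $q-k$, one for each $k \in \{0,2,3,\ldots,q\}$. Analogous descriptions are immediate for the two generators of $\widetilde{Kh}^\ast_\ast(T_{2,2})$ and the three generators of $\widetilde{Kh}^\ast_\ast(T_{2,3})$. Next I would make $\widetilde{\Sigma}^2_{q,q'}$ explicit at the chain level. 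For $p=2$ its total bi-degree is $(0,-1)$, which matches a single $1$-handle move performed after the connected-sum identification $S^\ast$ of Lemma~\ref{CSiso}: $S^\ast$ sends $x_\bullet v \otimes x_\gamma w$ to $x_\bullet v w$, and the subsequent handle acts as a product $m$ or coproduct $\Delta$ according to the local circle configuration around the fusion region.

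Surjectivity then follows by induction on $q$, using the naturality square
\[
\begin{array}{ccc}
\widetilde{Kh}^\ast_\ast(T_{2,2}) \otimes \widetilde{Kh}^\ast_\ast(T_{2,q}) & \overset{\widetilde{\Sigma}^2_{2,q}}{\longrightarrow} & \widetilde{Kh}^\ast_\ast(T_{2,q+2}) \\
\big\downarrow 1 \otimes i^\ast_q & & \big\downarrow i^\ast_{q+2} \\
\widetilde{Kh}^\ast_\ast(T_{2,2}) \otimes \widetilde{Kh}^\ast_\ast(T_{2,q+1}) & \overset{\widetilde{\Sigma}^2_{2,q+1}}{\longrightarrow} & \widetilde{Kh}^\ast_\ast(T_{2,q+3})
\end{array}
\]
from Section~2. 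Since $i^\ast_{q+2}$ is injective with one-dimensional cokernel supported in the freshly appeared homological degree $-(q+3)$, the inductive step reduces to exhibiting a single element of $\widetilde{Kh}^\ast_\ast(T_{2,2}) \otimes \widetilde{Kh}^\ast_\ast(T_{2,q+1})$ whose image has a non-zero component in that top degree. The natural candidate is the tensor of the generator of $\widetilde{Kh}^\ast_\ast(T_{2,2})$ in homological degree $-2$ with the newly appeared generator of $\widetilde{Kh}^\ast_\ast(T_{2,q+1})$ in homological degree $-(q+1)$: it has the correct total homological degree $-(q+3)$ and lies outside the image of $1 \otimes i^\ast_q$, so its image provides exactly what the induction needs.

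The main obstacle I expect lies in the chain-level calculation: the raw product and coproduct rules generate many monomial terms, most of which are boundaries, so a direct verification would be unwieldy. This is precisely the motivation for the technique of Section~5, which I would invoke to systematically track the effect of the fusion handle on the distinguished cycle and to isolate the non-trivial contribution landing on the top generator of $\widetilde{Kh}^\ast_\ast(T_{2,q+3})$. The argument for $\widetilde{\Sigma}^2_{3,q}$ is formally identical, with the three generators of $\widetilde{Kh}^\ast_\ast(T_{2,3})$ playing the role of the two generators of $\widetilde{Kh}^\ast_\ast(T_{2,2})$ and the analogous inductive cokernel now sitting in homological degree $-(q+4)$.
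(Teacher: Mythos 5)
Your overall architecture is plausible but the proposal as written has a genuine gap, and it also mischaracterizes the tool you invoke to close it. Let me explain both points.

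The naturality square you set up is correct: the ladder of Section~2 makes $\widetilde{\Sigma}^2_{2,q}$ compatible with the shifted inclusions, and since $i^\ast_{q+2}$ is injective with one-dimensional cokernel concentrated in the new homological degree $-(q+3)$, the inductive step does reduce to showing that $\widetilde{\Sigma}^2_{2,q+1}$ is nonzero in that single bidegree. Your candidate element (generator in degree $-2$ tensor the newly appeared generator in degree $-(q+1)$) is the only possible thing to try. But the whole force of the lemma lives in the claim that its image in $\widetilde{Kh}^{-(q+3)}_\ast(T_{2,q+3})$ is nonzero, and you never verify this. Without it, the induction has neither a base case nor a working inductive step: it is perfectly consistent a priori that the fusion map vanishes on the top homological degree at every stage, so no amount of commuting squares will force surjectivity. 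You acknowledge the obstacle, but then defer it to ``the technique of Section~5.'' That technique is \emph{completing the triple}: given a $1$-handle move between $F$ and $F'$, one reverses orientations on one side and adds a crossing so that $F$ and $F'$ become $D_0$ and $D_1$ of an exact triple, putting the induced map into a long exact sequence. It is a homological-algebra device, not a chain-level bookkeeping tool for tracking representative cycles through $m$ and $\Delta$. Invoking it as a means to isolate ``the non-trivial contribution landing on the top generator'' reverses its purpose; it gives you global dimension constraints, not explicit chain computations.

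The paper's actual argument closes the gap differently and requires no induction at all. For $\widetilde{\Sigma}^2_{2,q}$, completing the triple produces an exact triple with third vertex a $T_{2,q-2}$, and one checks $\dim\widetilde{Kh}(T_{2,2}\sharp T_{2,q}) = 2q = (q-2)+(q+2)$, forcing the connecting map to be surjective purely for dimensional reasons. For $\widetilde{\Sigma}^2_{3,q}$, the completed triple involves an alternating twist knot $L_{q-2}$; thinness (Lemma~\ref{triplethin}) pins the boundary map down to being zero, injective, or surjective, dimension counting rules out injectivity, and a skein-theoretic bound $\dim\widetilde{Kh}(L_q)\leq 2q-1$ rules out the zero case by contradiction. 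This also shows why your claim that ``the argument for $\widetilde{\Sigma}^2_{3,q}$ is formally identical'' is too optimistic: the second case is structurally different, as the completed diagram is a twist knot rather than a torus link, and a substantial extra estimate is needed. If you want to salvage your inductive scheme, you would have to actually carry out the chain-level verification you flagged, which is likely more work than the triple-completion argument it would replace.
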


\begin{theorem}\label{Th2inf}
	There is a bi-graded algebra isomorphism:
	\[
	\widetilde{Kh}^{\ast}_{\ast}(T_{2,\infty}) \cong \mathbb{Z}_2[x,y]/(x^3=y^2).	
	\]
	The degrees of the generators are given by $|x|=(-2,0),\hspace{0.25cm} |y|=(-3,0).$
\end{theorem}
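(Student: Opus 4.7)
The plan is to exploit the fact that $\khr^{\ast}_{\ast}(T_{2,\infty})$ already has a very constrained vector-space structure: by Example \ref{2strandedlimit}, $\khr^{-k}_0(T_{2,\infty}) = \bbZ_2$ for $k \in \{0, 2, 3, 4, \ldots\}$ and vanishes in all other bidegrees. Write $g_k$ for the unique nontrivial class in bidegree $(-k, 0)$ and set $x := g_2$, $y := g_3$. The ring $\bbZ_2[X, Y]/(X^3 - Y^2)$ reduces each monomial to $X^a Y^b$ with $b \in \{0, 1\}$, and those monomials live in bidegrees $(-2a - 3b, 0)$; as $(a, b)$ varies, $-2a - 3b$ attains $0$ and every integer $-k$ with $k \geq 2$ exactly once. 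So source and target have identical Hilbert series, and it suffices to produce a surjective bi-graded algebra map and verify the defining relation $x^3 = y^2$.

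The whole computation is powered by Lemma \ref{2fusion} together with the naturality relation $\varphi_{q+q'} \circ \widetilde{\Sigma}^2_{q,q'} = \widetilde{\Sigma}^2_\infty \circ (\varphi_q \otimes \varphi_{q'})$ established before Theorem \ref{algstruc}. Denote by $g_k^{(q)}$ the generator of the one-dimensional space $\khr^{-k}_0(T_{2,q})[0, q-1]$ when $k \in \{0, 2, 3, \ldots, q\}$; since the inclusions $i_q^\ast$ are injective (Example \ref{2strandedlimit}), $\varphi_q(g_k^{(q)}) = g_k$. In the source of $\widetilde{\Sigma}^2_{2,q}$ at bidegree $(-q - 2, 0)$, the only way to write $q + 2 = a + b$ with $a \in \{0, 2\}$ (the active bidegrees of $\khr(T_{2,2})[0,1]$) and $b \in \{0, 2, 3, \ldots, q\}$ is $(a, b) = (2, q)$; this summand is therefore one-dimensional and spanned by $x_{2,2} \otimes g_q^{(q)}$. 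The target is also one-dimensional, so the surjectivity asserted by Lemma \ref{2fusion} forces $\widetilde{\Sigma}^2_{2, q}(x_{2,2} \otimes g_q^{(q)}) = g_{q+2}^{(q+2)}$. Applying $\varphi_{q+2}$ yields $x \cdot g_q = g_{q+2}$ in $\khr^{\ast}_{\ast}(T_{2,\infty})$ for every $q \geq 3$; the same bidegree-counting applied to $\widetilde{\Sigma}^2_{3, q}$ at $(-q - 3, 0)$ (unique decomposition $q + 3 = 3 + q$) produces $y \cdot g_q = g_{q+3}$.

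Three base computations, extracted from the same analysis, close the loop. In the target of $\widetilde{\Sigma}^2_{2, 3}$ at bidegree $(-4, 0)$ only $x_{2,2} \otimes x_{2,3}$ contributes, giving $x^2 = g_4$; at $(-5, 0)$ only $x_{2,2} \otimes y_{2,3}$ contributes, giving $xy = g_5$; and in the target of $\widetilde{\Sigma}^2_{3, 3}$ at $(-6, 0)$ only $y_{2,3} \otimes y_{2,3}$ contributes, giving $y^2 = g_6$. Associativity and commutativity (Theorem \ref{algstruc}) then give $x^3 = x \cdot g_4 = g_6 = y^2$, the required relation. Iterating $x \cdot g_k = g_{k+2}$, starting from $g_0 = 1, g_2 = x, g_3 = y$, expresses every $g_k$ as a polynomial in $x$ and $y$, so the evident algebra map $\bbZ_2[X, Y]/(X^3 - Y^2) \to \khr^{\ast}_{\ast}(T_{2,\infty})$ is a bi-graded surjection between spaces of identical finite Hilbert series, hence an isomorphism. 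The heart of the argument is really Lemma \ref{2fusion}; once its surjectivity is granted, everything reduces to the bidegree bookkeeping above.
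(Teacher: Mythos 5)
Your proof is correct and follows essentially the same route as the paper's: both deduce from Lemma \ref{2fusion} together with the injectivity of the projections $\varphi_q$ that the fusion maps restrict to isomorphisms on the relevant one-dimensional bidegree slots, then obtain the relation via $6 = 2+2+2 = 3+3$. The only cosmetic difference is that you spell out the Hilbert-series comparison at the end, where the paper simply declares the resulting surjection an isomorphism.
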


\begin{proof}
	Let $a_i$ be the generator of $\khr^{-i}_{\ast}(T_{2,q}) \cong \bbZ_2$. Recall from Example \ref{2strandedlimit} that the projections $\varphi_q: \khpqd{2}{q}[0,q-1] \longrightarrow \khpqd{2}{\infty}$ are injective and hence provide an identification between the $a_i$'s and the generators of $\khpqd{2}{\infty}$.
	As an algebra, there are at least 2 generators $a_2, a_3$ in homological degrees $-2, -3$ respectively.
	Let $n \geq 5$. By Lemma \ref{2fusion}, $a_n$ lies in the image of the map
	\[
	\widetilde{\Sigma}^2_{2,q'}:\khr^{-2}_{\ast}(T_{2,2}) \otimes \khr^{-n+2}_{\ast}(T_{2,n-2}) \longrightarrow \khr^{-n}_{\ast}(T_{2,n}).
	\]
	Moreover $\dim(\khr^{-2}_{\ast}(T_{2,2}) \otimes \khr^{-n+2}_{\ast}(T_{2,n-2})) =1 = \dim(\khr^{-n}_{\ast}(T_{2,n}))$. It follows that the restriction of $\widetilde{\Sigma}^2_{2,q'}$ to these particular degrees is an isomorphism and we have  an identification 
	\[
	a_n = \widetilde{\Sigma}^2_{2,q'}(a_2 \otimes a_{n-2}) = a_2 \cdot a_{n-2}.
	\]
	Furthermore, since $\dim(\khr^{-2}_{\ast}(T_{2,2}) \otimes \khr^{-2}_{\ast}(T_{2,3})) =1 = \dim(\khr^{-4}_{\ast}(T_{2,5}))$, we obtain the relation $a_4 = a_2 \cdot a_2 = a_2^2$.
	The other surjectivity in Lemma \ref{2fusion} can be used similarly and provides identifications $a_n = a_3 \cdot a_{n-3}$. Hence, by induction on $n$, our algebra has exactly $2$ generators, namely $a_2$ and $a_3$.
    Finally, decompose $6 = 2 + 2 + 2 = 3 + 3$ to obtain the relation
	\[
	a_2^3 = a_6 = a_3^2.
	\]	
	The obvious map between $\widetilde{Kh}^{\ast}_{\ast}(T_{2,\infty})$ and $\bbZ_2[x,y]/(x^3=y^2)$ is an algebra isomorphism.
\end{proof}

\subsection{Modules over $\khpq{p}{\infty}$}

The proof of Theorem \ref{Th2inf} relied heavily on the fact that some fusion maps where surjective. However, in general, such property is not easy to obtain. In order to circumvent that difficulty, we associate limits vector spaces to tangle closures and module structures over $\khpq{p}{\infty}$ to these limit spaces. The particular case of modules over $\khpq{p-1}{\infty}$ will play a key role in understanding $\khpq{p}{\infty}$.

\begin{center}
	\includegraphics[scale=0.90]{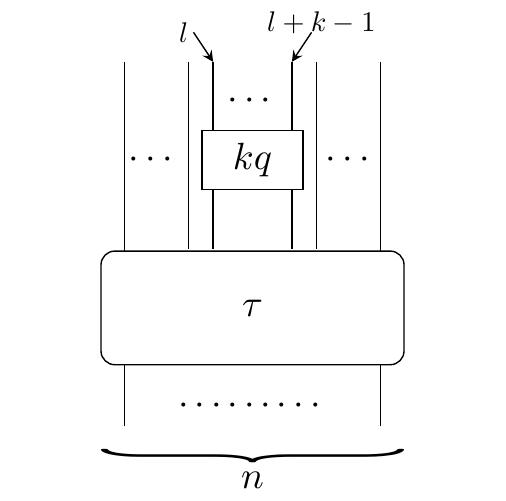}
\end{center}

Let $\tau$ be an oriented $(n,n)$-tangle. We assume that the orientations of the top and bottom strands agree, so that the closure of $\tau$ is naturally oriented. Fix two integers $2\leq k \leq n$ and $1 \leq l \leq n-k + 1$, and denote by $\Delta^k_l$ the negative full twist over $k$ strands starting at the strand $l$. Consider the family of tangles $\tau^q = (\Delta^k_l)^q \circ \tau$, pictured above. All tangles in that family inherit an orientation from $\tau$ since they differ only by a full twist. For each $q \geq 0$, let $D^q$ be the oriented closure of $\tau^q$.  From this family we construct a directed system of chain complexes associated to the sequence of inclusions below.
\[
\cdots \overset{i_{q-1}}{\hookrightarrow} \widetilde{C}(D^q)[0,(k-1)(q(k-1))] \overset{i_q}{\hookrightarrow} \widetilde{C}(D^{q+1})[0,(k-1)(qk)] \overset{i_{q+1}}{\hookrightarrow} \cdots
\]
where the $i_q$'s are obtained by $1$-smoothing the topmost full twist $\Delta^k_l$ of $(\Delta^k_l)^q$. Note that the degrees of the inclusions will match the shifts if and only if all the strands are oriented consistently in the same direction. This fact motivates the definition below.

\begin{definition}
		Let $\tau$ be an oriented $(n,n)$-tangle that closes to an oriented link. Let $k \leq n$, $1 \leq l \leq n-k + 1$ be two fixed integers. We say that $\tau$ is \emph{$(k,l)$-admissible} if it admits an orientation where all  strands $l,l+1,\ldots,l+k-1$ have the same orientation (either all upwards, or all downwards). By extension if $D=\hat{\tau}$ we say that $D$ is $(k,l)$-admissible if and only if $\tau$ is $(k,l)$-admissible.
\end{definition}

Let $D$ be a diagram presented as the closure of a  $(n,n)$-tangle $\tau$. The $(k,l)$-limit of $D$, denoted by $\widetilde{C}(D,k,l)$, is the direct limit of the directed system below.
	\[
	\langle \widetilde{C}(D^q)[0,(k-1)(q(k-1)))] , f_{jq} \rangle, \mbox{ where } f_{jq} = i_{j} \circ i_{j-1} \circ \cdots \circ i_{q} \mbox{ if } q < j, \mbox{ and } f_{qq} = 1.
	\]
The corresponding homology will be denoted by $\widetilde{Kh}^{\ast,\ast}(D,k,l)$. Again, we have shifted the complexes so that the inclusions have bi-degree $(0,0)$ if and only if our tangle is $(k,l)$-admissible and equipped with the (one of the two) orientation that make it so. Any of these limit is invariant under planar isotopies and Reidemeister moves acting outside the region where we twist the strands. Note that for $(k,l)$-admissible tangles, we can add the twists one by one with consistent orientations at each step, even if the number of components of the closure might change. This nets us a directed system more in tune with the definition of $\widetilde{Kh}^{\ast,\ast}(T_{p,\infty})$, and both procedure yield the same result.

\medskip

Given a $(k,l)$-admissible tangle $\tau$, any $(k,l)$-limit $\widetilde{Kh}^{\ast,\ast}(\tau,k,l)$ can be made into a module over the ring $\widetilde{Kh}^{\ast,\ast}(T_{k,\infty})$. This structure is achieved through the use of a movie made of successive $1$-handle moves pictured below (for $k=3$, and $D$ the closure of a tangle $\tau$).

\begin{center}
	\includegraphics{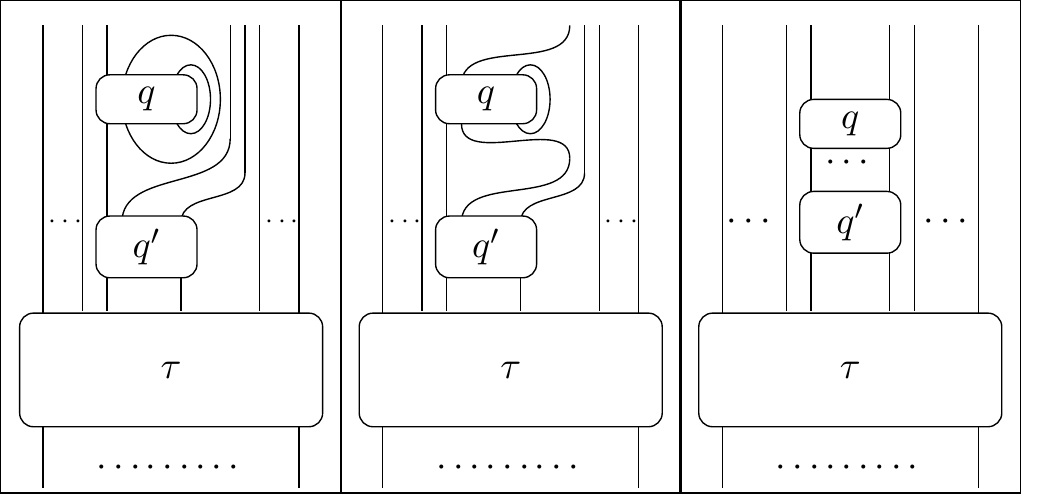}
\end{center}

The movie above produces an action of $\khpq{k}{\infty}$ on $\widetilde{Kh}^{\ast,\ast}(D,k,l)$, i.e a map:
\[
\Sigma_{(k,l)}:  \khpq{k}{\infty} \otimes \widetilde{Kh}^{\ast,\ast}(D,k,l)  \longrightarrow \widetilde{Kh}^{\ast,\ast}(D,k,l).
\]
All axioms of an action can be proved by adapting the proof of Theorem \ref{algstruc}. Thus $\widetilde{Kh}^{\ast,\ast}(D,k,l)$ can be considered a $\khpq{k}{\infty}$-module.

\begin{example}\label{unlink}
	Let $D=T_{p,1}$ and $\tau$ the underlying braid. For any  $2\leq k \leq n$, $1 \leq l \leq n-k + 1$ and any $q\geq 0$, we have $\hat{\tau}^q = T_{k,kq+1}$ so there is an isomorphism of $\widetilde{Kh}^{\ast,\ast}(T_{k,\infty})$-modules
	\[
	\widetilde{Kh}^{\ast,\ast}(D,k,l) \cong \widetilde{Kh}^{\ast,\ast}(T_{k,\infty}).
	\]
\end{example}

The total long exact sequence is compatible with this process. The limit functor is exact, so given a $(k,l)$-admissible diagram $D$ and for any choice of crossing, we get a sequence of ungraded vector spaces 
\[
0 \longrightarrow \widetilde{C}^{\ast}(D_1,k,l) \overset{i}{\longrightarrow} \widetilde{C}^{\ast}(D,k,l) \overset{q}{\longrightarrow} \widetilde{C}^{\ast}(D_0,k,l) \longrightarrow 0.
\]
In order to introduce gradings, we need to consider the shifts associated to our exact triple at each step. Let us make an easy observation: at least one of $D_0$ or $D_1$, the one with the induced orientation, is also $(k,l)$-admissible. But the other one might not be. This leads to the following consideration.

\medskip
\begin{lemma}\label{absorb}
	Let $D$ be a $(k,l)$-admissible diagram and $c$ be a crossing in $D$. 
	\begin{enumerate}[(i)]
		\item{If exactly one of $D_0$ or $D_1$ is $(k,l)$-admissible, then there is an isomorphism of $\khpq{k}{\infty}$-module 
			\[
			\widetilde{Kh}^{\ast,\ast}(D,k,l) \cong
			\begin{dcases*}
			\widetilde{Kh}^{\ast,\ast}(D_0,k,l)[0,1] & if $c$ is positive. \\
			\widetilde{Kh}^{\ast,\ast}(D_1,k,l)[0,-1] & if $c$ is negative.
			\end{dcases*}
			\]
		}
		\item{Let $\sigma_i$ be the $ith$ positive generator of the braid group over $n$ strands. If $l\leq i \leq l+k-2$, then  there are isomorphisms of $\khpq{k}{\infty}$-module 
			\[
			\widetilde{Kh}^{\ast,\ast}(\widehat{\sigma_i^{\pm 1}\circ \tau},k,l) \cong \widetilde{Kh}^{\ast,\ast}(\hat{ \tau},k,l)[0,\pm 1] \cong
			\widetilde{Kh}^{\ast,\ast}(\widehat{\tau \circ \sigma_i^{\pm 1}},k,l).
			\]
		}
		\item{If both are $(k,l)$-admissible, then for each $j \in \bbZ$ there is a limit short exact sequence:
			\[
			0 \longrightarrow \widetilde{C}^{\ast,j}(D_1,k,l)[w_{+},3w_{+}-1] \overset{i}{\longrightarrow} \widetilde{C}^{\ast,j}(D,k,l) \overset{q}{\longrightarrow} \widetilde{C}^{\ast,j}(D_0,k,l)[w_{-},3w_{-}+1] \longrightarrow 0,
			\]
			where $w_{+},w_{-}$ are the shifts associated to the original exact triple $(D_1,D,D_0)$. Additionally, the associated total long exact sequence respects the module structure.
		}
	\end{enumerate}
\end{lemma}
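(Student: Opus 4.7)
My plan is to prove the three claims in the order (iii), (i), (ii). Part (iii) is the engine of the argument: it should follow directly from the exactness of the direct-limit functor (Proposition~\ref{limit}(i)) applied to the level-wise short exact sequences of the exact triple. Part (i) is the subtle one and should follow from the same level-wise setup combined with a vanishing argument on the non-admissible side. Part (ii) is then a direct application of (i) to a single crossing.

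For (iii), admissibility of both $D_0$ and $D_1$ implies that all three of $D, D_0, D_1$ share a common orientation on the strands $l,\ldots,l+k-1$, so every crossing of every top twist $(\Delta^k_l)^q$ is negative in each of $D^q, D_0^q, D_1^q$. Consequently the exact-triple shifts $w_{\pm}$ do not depend on $q$. At each level $q$, the exact triple $(D_1^q, D^q, D_0^q)$ produces a short exact sequence, and these assemble into a morphism of directed systems: the maps $i$ and $q$ of the triple act at the crossing $c$, while the inclusion maps defining the directed system act by $1$-smoothing the topmost full twist of $(\Delta^k_l)^{q+1}$, so the two operate on disjoint regions of the diagram and commute. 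Exactness of $\underset{\longrightarrow}{\lim}$ (Proposition~\ref{limit}(i)) yields the limit short exact sequence, and the associated long exact sequence is $\widetilde{Kh}(T_{k,\infty})$-equivariant because the fusion movie is also supported in the twist region and therefore commutes with both $i, q$ and the connecting homomorphism at every finite level.

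For (i), say $c$ is positive so $D_0$ inherits the orientation and is $(k,l)$-admissible, while $D_1$ is not. The shift for the $D_1^q$ term is $w_+^q = 1 + n_-(D_1^q) - n_-(D^q)$, and non-admissibility of $D_1$ forces at least one crossing per top twist to be positive in any orientation of $D_1^q$; writing $f < (k-1)^2$ for the number of negative crossings per top twist in $D_1$'s chosen orientation, one finds $w_+^q = w_+^0 + (f - (k-1)^2)q \to -\infty$. The expectation---and the one delicate step of the proof---is that, after accounting for the shifts present in the directed system that defines $\widetilde{C}(D_1, k, l)$, this divergence forces the $D_1$ contribution to vanish in every fixed bi-degree of the limit long exact sequence, collapsing it to $\widetilde{Kh}^{\ast,\ast}(D, k, l) \cong \widetilde{Kh}^{\ast,\ast}(D_0, k, l)[0, 1]$. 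Module compatibility is immediate from the disjointness argument of (iii), and the case of $c$ negative (with $D_1$ admissible and $D_0$ not) is symmetric.

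For (ii), apply (i) to the crossing $c = \sigma_i^{\pm 1}$ of $\widehat{\sigma_i^{\pm 1} \circ \tau}$. Because $l \leq i \leq l+k-2$, the strands $i$ and $i+1$ both lie in the twist region and are co-oriented by admissibility of $\tau$. The oriented smoothing of $\sigma_i^{\pm 1}$ returns $\hat\tau$, which is $(k,l)$-admissible, while the opposite smoothing is horizontal and reverses the flow on strands $i, i+1$, destroying admissibility. Part (i) then yields $\widetilde{Kh}^{\ast,\ast}(\widehat{\sigma_i^{\pm 1} \circ \tau}, k, l) \cong \widetilde{Kh}^{\ast,\ast}(\hat\tau, k, l)[0, \pm 1]$; the post-composition case $\widehat{\tau \circ \sigma_i^{\pm 1}}$ is handled identically by placing the extra crossing at the bottom of the tangle. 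The main obstacle throughout is the vanishing statement in (i): reconciling the linear growth of $w_+^q$ with the simultaneously growing homological range of $\widetilde{Kh}(D_1^q)$ is nontrivial and likely requires either Sto\v{s}i\'c's stabilization theorem or a direct chain-level inspection of the inclusions in the directed system for the non-admissible side. Everything else reduces to routine diagram chases using the naturality results of the preceding subsections.
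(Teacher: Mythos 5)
Your plan follows the same skeleton as the paper's proof (the paper also derives (ii) from (i) and handles (iii) by constancy of the shifts plus exactness of $\underset{\longrightarrow}{\lim}$), but there is a genuine gap exactly where you flag one. You establish that $w_\pm^q \to -\infty$ on the non-admissible side and then write that ``the expectation --- and the one delicate step of the proof --- is that this divergence forces the \ldots contribution to vanish,'' speculating it requires Sto\v{s}i\'c's stabilization theorem or a chain-level inspection. In the paper this is not a hard step at all: once one knows $w_-^q \le w_-^{q-1} - (k-1)$ (because every added full twist of the non-admissible side contains at least one positive crossing, so contributes at most $(k-1)^2$ negative crossings, versus $k(k-1)$ for $D^q$), the directed system underlying the shifted complex $\widetilde{C}^{\ast,\ast}(D_0,k,l)[-\infty,-\infty]$ has its grading pushed off to $-\infty$ level by level, which makes it contractible; taking homology of the limit short exact sequence then collapses it to an isomorphism. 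No appeal to Sto\v{s}i\'c is needed. Until you supply this short degree-counting argument (or an equivalent one), part (i) --- and hence (ii), which you deduce from it --- is not proved.

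Two smaller points. First, your intermediate formula $w_+^q = w_+^0 + \bigl(f-(k-1)^2\bigr)q$ has a slip: the subtrahend should be the number of crossings $D^q$ gains per twist, namely $k(k-1)$, not $(k-1)^2$; the conclusion $w_+^q \to -\infty$ is unaffected since $f \le (k-1)^2 < k(k-1)$, but the formula as written is wrong. Second, your equivariance argument in (iii), that the fusion movie is supported in the twist region and hence commutes with the triple maps and the connecting homomorphism, is essentially what the paper invokes (``courtesy of the monoidality and functoriality of both $\underset{\longrightarrow}{\lim}$ and Khovanov functors''), so that part is fine.
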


\begin{proof}
	We begin with a very simple observation concerning the negative crossings of $D^q$. Since $D$ is $(k,l)$-admissible, i.e. all twisting strands are oriented in the same direction, we must have
	\[
	n_{-}(D^q)= k(k-1) + n_{-}(D^{q-1}).
	\]
	For $(i)$, let us assume that the crossing $c$ is negative, so that $D_1$ is $(k,l)$-admissible and $D_0$ is not. Since $D_0$ is not $(k,l)$-admissible, neither are any of the $D^q_0$. In particular, since there are always at least $k$ positive crossings in each added full twist, we add less than $(k-1)(k-1)$ negative crossings each step of the way:
	\[
	n_{-}(D^q_0) \leq (k-1)(k-1) + n_{-}(D^{q-1}_0).
	\]
	Therefore the shift $w^{q}_{-}$ in the short exact sequence for $D^q$ is related to the previous one $w^{q-1}_{-}$:
	\[
	w^q_{-}= n_{-}(D^q_0) - n_{-}(D^q) \leq (k-1)^2 + n_{-}(D^{q-1}_0) -k(k-1) - n_{-}(D^{q-1}) = w^{q-1}_{-} -(k-1).
	\]
	The sequence $w^q_{-}$ is strictly decreasing as $k \geq 2$. Since $w_{+}=0$, we have the graded short exact sequence
	\[
	0 \longrightarrow \widetilde{C}^{\ast,\ast}(D_1,k,l)[0,-1] \overset{i}{\longrightarrow} \widetilde{C}^{\ast,\ast}(D,k,l) \overset{q}{\longrightarrow} \widetilde{C}^{\ast,\ast}(D_0,k,l)[-\infty,-\infty] \longrightarrow 0.
	\]
	All elements of $\widetilde{C}^{\ast,\ast}(D_0,k,l)[-\infty,-\infty]$ have arbitrarily small homological degree and so this complex is contractible. Taking homology yields the result. If $c$ positive, one should just reverse the roles of $D_0$ and $D_1$, thus exchanging the roles of $w_{-}$  and $w_{+}$.
	
	The second claim $(ii)$ is a straightforward application of $(i)$. Consider the exact sequence associated to the crossing $\sigma_i^{\pm 1}$. One of $D_1$ or $D_0$ cannot be $(k,l)$-admissible, because of the cup in the case $\sigma_i^{\pm 1}\circ \tau$ (or the cap for $\tau \circ \sigma_i^{\pm 1}$). Hence, by $(i)$, the claim holds.
	
	For $(iii)$, the process is roughly the same. Since both $D_1$ and $D_0$ are $(k,l)$-admissible, we can count precisely the number of negative crossings at each step. Since all $k$ strands are consistently oriented, we add only negative crossings for each new full twist. Thus, if $c$ is negative, for any $q\geq 1$ we have
	\[
	\begin{array}{ccl}
	w^q_{-} &=& n_{-}(D^q_0) - n_{-}(D^q)  \\
	&=& k(k-1) + n_{-}(D^{q-1}_0) - (k(k-1) + n_{-}(D^{q-1})) \\
	&=& n_{-}(D^{q-1}_0) - n_{-}(D^{q-1}) = w^{q-1}_{-}.
	\end{array}
	\]
	Therefore $w^q_{-}=w_{-}$ for any $q$. Since $w^q_{+}=0$, the graded short exact is given by 
	\[
	0 \longrightarrow \widetilde{C}^{\ast,j}(D_1,k)[0,-1] \overset{i}{\longrightarrow} \widetilde{C}^{\ast,j}(D,k) \overset{q}{\longrightarrow} \widetilde{C}^{\ast,j}(D_0,k)[w_{-},3w_{-}+1] \longrightarrow 0.
	\]
	The positive case is treated similarly. The total exact sequence preserves the module structure, courtesy of the monoidality and functoriality of both the $\underset{\rightarrow}{\lim}$ and Khovanov functors.
\end{proof}

\begin{remark}
	First let us say that both properties $(i)$ and $(ii)$ were already treated in Rozansky's work \cite{rozansky2010infinite}, albeit in a different setting. In the rest of this work, we will to refer to the isomorphisms in $(ii)$ as the \emph{absorption property}.
\end{remark}

Let us first give two interesting applications, that will come in handy later.

\begin{example}\label{shiftedinf}
	Let $\tau$ be the $3$-braid given by $\sigma_1^{-1} \circ \sigma_2^{-n}$, so that $\hat{\tau}=T_{2,n}$. Then there are isomorphisms
	\[
	\widetilde{Kh}^{\ast,\ast}(\hat{\tau},3,1) \cong \widetilde{Kh}^{\ast,\ast}(\widehat{\sigma_1^{-1} \circ \sigma_2^{-1}},3,1) [0,-n+1] \cong \khpq{3}{\infty}[0,-n+1],
	\] 
	where the first isomorphism is an iteration of Lemma \ref{absorb} $(ii)$, and the second uses the definition of $\khpq{3}{\infty}$.
	Also, let us consider the $3$-braid $\tau=\sigma_1^{-1}$, that closes to a $2$-component unlink. Then there is an isomorphism
	\[
	\widetilde{Kh}^{\ast,\ast}(\hat{\tau},3,1) \cong \widetilde{Kh}^{\ast,\ast}(\widehat{\sigma_1^{-1} \circ \sigma_2^{-1}},3,1)[0,1] \cong  \khpq{3}{\infty}[0,1].
	\]
	by inserting a crossing via Lemma \ref{absorb} again.
\end{example}

The functoriality of the limit, combined with $(iii)$ of Lemma \ref{absorb}, also nets us projections at the level of the total exact sequence. First, note that the canonical projection
\[
\varphi:\widetilde{C}^{\ast,\ast}(D) \longrightarrow \widetilde{C}^{\ast,\ast}(D,k,l)
\]
induces a map of short exact sequences:
\begin{center}
\[
\begin{array}{ccccccc}
0 \longrightarrow& \widetilde{C}^{\ast,\ast}(D_1)[w_{+},3w_{+}-1]& \overset{i}{\longrightarrow}& \widetilde{C}^{\ast,\ast}(D)& \overset{q}{\longrightarrow}& \widetilde{C}^{\ast,\ast}(D_0)[w_{-},3w_{-}+1]& \longrightarrow 0 \\
& \Big\downarrow \varphi^1&&\Big\downarrow \varphi&&\Big\downarrow \varphi^0&\\
0 \longrightarrow& \widetilde{C}^{\ast,\ast}(D_1,k,l)[w_{+},3w_{+}-1]& \overset{i}{\longrightarrow}& \widetilde{C}^{\ast,\ast}(D,k,l)& \overset{q}{\longrightarrow}& \widetilde{C}^{\ast,\ast}(D_0,k,l)[w_{-},3w_{-}+1]& \longrightarrow 0,
\end{array}
\]
\end{center}
where $\varphi^1:\widetilde{C}^{\ast,\ast}(D_1) \longrightarrow \widetilde{C}^{\ast,\ast}(D_1,k,l)$ and $\varphi^0:\widetilde{C}^{\ast,\ast}(D_0) \longrightarrow \widetilde{C}^{\ast,\ast}(D_0,k,l)$ are also projections. Thus at the level of the total exact sequence, we have a chain map $\varphi^0 \oplus \varphi^1$:
\[
\begin{array}{ccc}
\khr^{i,j}(D_0)[w_{-},3w_{-}+1]  & \bigoplus &  \khr^{i+1,j}(D_1)[w_{+},3w_{+}-1] \\
\Big\downarrow \varphi^0& & \Big\downarrow \varphi^1\\
\khr^{i,j}(D_0,k,l)[w_{-},3w_{-}+1]  & \bigoplus &  \khr^{i+1,j}(D_1,k,l)[w_{+},3w_{+}-1]\\
\end{array}
\]

With the absorption property, we can deepen our understanding of the general setting. We aim to show that as a $\khpq{p-1}{\infty}$-module, $\khpq{p}{\infty}$ is determined only by two families of total exact sequences.

\begin{center}
	\includegraphics[scale=0.75]{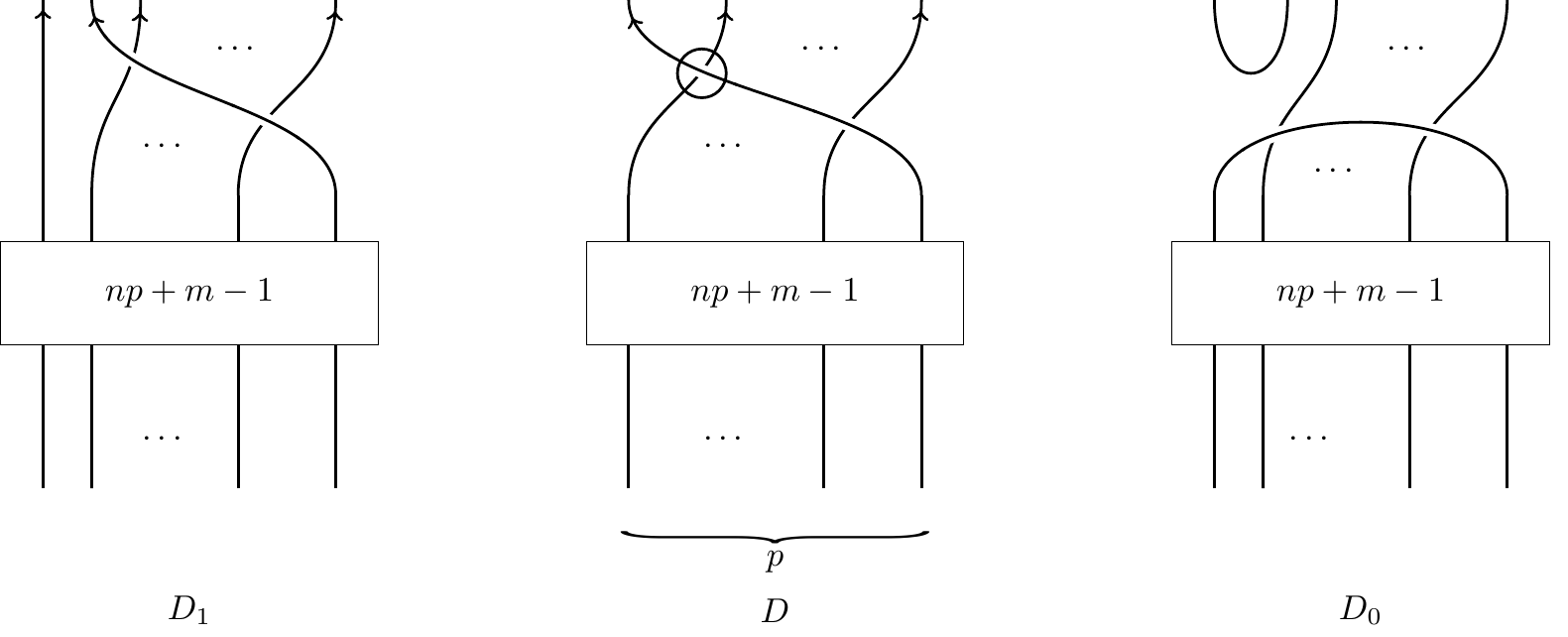}
		\captionof{figure}{The standard exact triple for torus links}\label{Generaltriple}
\end{center}

\begin{lemma}\label{intsteps}
	Fix $p\geq 3$. For any $2 \leq m \leq p-1$, and any positive integer $n \geq 0$, there are isomorphisms of graded $\khpq{p-1}{\infty}$-modules
	\[
	\khr^{\ast,\ast}(T_{p,np+m},p-1,2) \cong \khr^{\ast,\ast}(T_{p,np+1},p-1,2)[0,-(m-1)(p-1)].
	\]
\end{lemma}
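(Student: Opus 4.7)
My plan is to prove the recursion
\[
\widetilde{Kh}^{\ast,\ast}(T_{p,np+m},p-1,2) \cong \widetilde{Kh}^{\ast,\ast}(T_{p,np+m-1},p-1,2)[0,-(p-1)]
\]
for $2 \le m \le p-1$, from which the statement follows by induction on $m$, the base case $m=1$ being trivial. To set up the inductive step, I would write
\[
T_{p,np+m} = \widehat{\beta_{m-1}\cdot\sigma_1^{-1}\sigma_2^{-1}\cdots\sigma_{p-1}^{-1}},\qquad \beta_{m-1} := (\sigma_1^{-1}\sigma_2^{-1}\cdots\sigma_{p-1}^{-1})^{np+m-1},
\]
so that $\widehat{\beta_{m-1}}=T_{p,np+m-1}$. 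The strategy is to peel the topmost column $\sigma_1^{-1}\sigma_2^{-1}\cdots\sigma_{p-1}^{-1}$ off one crossing at a time, applying Lemma \ref{absorb}.

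The first step is routine. I would apply Lemma \ref{absorb}~(ii) successively to $\sigma_{p-1}^{-1}, \sigma_{p-2}^{-1}, \ldots, \sigma_2^{-1}$, each having index in $\{2,\ldots,p-1\}=\{l,\ldots,l+k-2\}$ with $(k,l)=(p-1,2)$. Every absorption contributes a shift $[0,-1]$, for a cumulative shift of $[0,-(p-2)]$; this reduces the question to analyzing $\widehat{\beta_{m-1}\sigma_1^{-1}}$. The remaining $\sigma_1^{-1}$ has index $1$ and so falls outside the range in which~(ii) applies; instead I would invoke Lemma \ref{absorb}~(i). Its $1$-smoothing recovers $\widehat{\beta_{m-1}}=T_{p,np+m-1}$, which is manifestly $(p-1,2)$-admissible. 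The technical heart of the argument is to establish that the $0$-smoothing $D_0$ is \emph{not} $(p-1,2)$-admissible; granted this, (i) supplies the final $[0,-1]$ shift.

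To verify the non-admissibility of $D_0$ I would trace orientations. The $0$-smoothing replaces $\sigma_1^{-1}$ by a cap at the top of the tangle joining positions $1$ and $2$, together with a cup at the top of $\beta_{m-1}$ joining the same positions. Beginning at top position $2$ of the tangle with tangent direction $d$: the cap forces top position $1$ to carry the opposite direction $\bar d$; the closure arc at position $1$ transports $\bar d$ down to bottom $1$ of $\beta_{m-1}$; the first strand of the braid, which preserves direction, carries it to top position $\sigma(1)$ of $\beta_{m-1}$, where $\sigma$ is the permutation induced by $\beta_{m-1}$; and since strands at positions $\ge 3$ pass straight through the former $\sigma_1^{-1}$ region, $\bar d$ is transmitted further to top position $\sigma(1)$ of the tangle. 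The key calculation is that $\beta_{m-1}$ induces the $(np+m-1)$-st power of the cyclic permutation $i \mapsto i-1 \pmod p$, so $\sigma(1) = p+2-m$; for $2 \le m \le p-1$ this value lies in $\{3,\ldots,p\}\subset\{2,\ldots,p\}$. Hence positions $2$ and $\sigma(1)$ both belong to the admissibility range $\{2,\ldots,p\}$ yet carry opposite directions, contradicting $(p-1,2)$-admissibility. This orientation analysis is the main obstacle; once it is in place, combining the two shifts yields the recursion and the result follows by induction.
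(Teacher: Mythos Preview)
Your proof is correct and follows essentially the same approach as the paper's own argument: both peel off the top row of $p-1$ crossings using Lemma~\ref{absorb}, obtaining the recursion
\[
\widetilde{Kh}^{\ast,\ast}(T_{p,np+m},p-1,2)\cong\widetilde{Kh}^{\ast,\ast}(T_{p,np+m-1},p-1,2)[0,-(p-1)],
\]
and then iterate. The only difference is the order of operations. The paper first applies Lemma~\ref{absorb}(i) to its circled crossing (arguing that $D_0$ fails to be $(p-1,2)$-admissible ``because of the cap'') and then absorbs the remaining $p-2$ crossings via~(ii); you instead absorb $\sigma_{p-1}^{-1},\ldots,\sigma_2^{-1}$ first via~(ii) and finish with~(i) on $\sigma_1^{-1}$. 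Your explicit orientation trace---computing the underlying permutation of $\beta_{m-1}$ and checking that $\sigma(1)=p+2-m\in\{3,\ldots,p\}$ so that positions $2$ and $\sigma(1)$ are forced to carry opposite directions---makes rigorous exactly the step the paper compresses into the phrase ``because of the cap,'' and it correctly explains why the range $2\le m\le p-1$ is needed.
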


\begin{proof}
	This statement follows readily from Lemma \ref{absorb} $(i),(ii)$. We consider the exact triple $(D_1,D,D_0)$ associated to the circled negative crossing in Figure \ref{Generaltriple}. For any $2 \leq m \leq p-1$, $D_0$ is not $(p-1,2)$-admissible, because of the cap, so we have
	\[
	\khr^{\ast,\ast}(T_{p,np+m},p-1,2) \cong \khr^{\ast,\ast}(D_1,p-1,2)[0,-1]
	\]
	We can now absorb the remaining $p-2$ negative crossings. This action induces an isomorphism
	\[
	\khr^{\ast,\ast}(D_1,p-1,2) \cong \khr^{\ast,\ast}(T_{p,np+m-1},p-1,2)[0,-p+2].
	\]
	The two previous isomorphisms can be combined into
	\[
	\khr^{\ast,\ast}(T_{p,np+m},p-1,2) \cong \khr^{\ast,\ast}(T_{p,np+m-1},p-1,2)[0,-p+1].
	\]
	If $m-1=1$, the result is as claimed. If $m-1 \geq 2$, we can repeat this process and each iteration nets us a $[0,-p+1]$ shift. This means that the inclusions induce isomorphisms
	\[
	\khr^{\ast,\ast}(T_{p,np+l},p-1,2) \cong \khr^{\ast,\ast}(T_{p,np+1},p-1,2)[0,-(l-1)(p-1)],
	\]
	as claimed.	
\end{proof}

Fortunately, this result does not hold if $l=p$ or $p+1$, since the resulting $D_0$ is $(p-1,2)$-admissible. For the next remark, let us omit the shifts associated to the long exact sequence. In both cases $l=p,p+1$, one checks easily that
\[
\khr^{\ast,\ast}(D_0,p-1,2) \cong \khpq{p-1}{\infty}.
\]
If $l=p+1$, our total exact sequence, which abuts to $\khr^{\ast,\ast}(T_{p,np+1},p-1,2)$, is just a family of boundary maps
\[
\khpq{p-1}{\infty} \longrightarrow \khr^{\ast,\ast}(T_{p,np},p-1,2),
\]
while if $l=p$, computing $\khr^{\ast,\ast}(T_{p,np},p-1,2)$ means studying a family 
\[
\khpq{p-1}{\infty} \longrightarrow \khr^{\ast,\ast}(T_{p,np-1},p-1,2) \cong \khr^{\ast,\ast}(T_{p,(n-1)p+1},p-1,2),
\]
where the last isomorphism was provided by Lemma \ref{intsteps}. To understand the algebra $\khpq{p}{\infty}$, one should first examine these two families of total exact sequences of $\khpq{p-1}{\infty}$-modules.

\section{The $3$-stranded torus links}

In this section, we begin by some computations. The two families whose homology we compute be central to our understanding of $\khpqd{3}{\infty}$ as a $\khpqd{2}{\infty}$-module. We then proceed to compute the algebra $\khpqd{3}{\infty}$.

\subsection{Examples of computations}

Recall that the limit total exact sequence is now a chain complex of $\khpq{k}{\infty}$-modules and that we can project the finite total exact sequences to the limit ones. This fact allows us to transfer information from finite cases to limits and vice-versa.
 
\begin{proposition}\label{332q}
 	The $\delta$-graded homology of $T_{(3,3),(2,q)}$ of Example \ref{T332q} is given by the following grid, where the $(-5,-q-2)$ is always empty:
 	
	\medskip
	\hspace*{1in}
 	\includegraphics{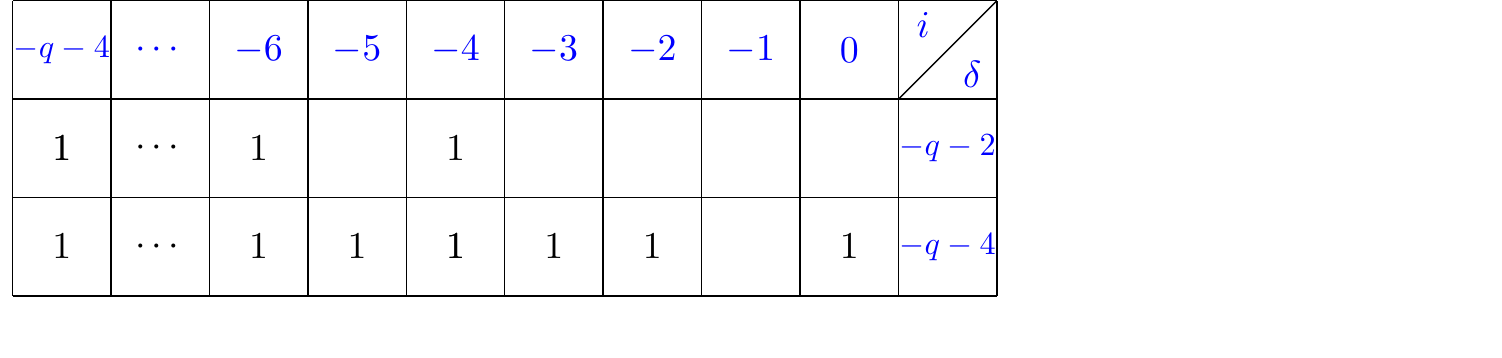}

\end{proposition}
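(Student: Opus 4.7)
The plan is to reuse the total exact sequence setup of Example \ref{T332q} for arbitrary $q \geq 0$. Applying the triple associated to the same circled crossing in $T_{(3,3),(2,q)}$, the additional $q$ half-twists lie above the smoothed crossing and are undisturbed, so the triple is $(D_1, D, D_0) = (T_{2,4+q}, T_{(3,3),(2,q)}, T_{2,q})$. The crossing counts $n_-(D) = 6+q$, $n_-(D_1) = 4+q$, $n_-(D_0) = 2+q$ yield the same shifts $w_+ = 0$ and $w_- = -4$ as in the base case.

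Feeding in the explicit formulas of Example \ref{2strandedlimit} for $\khr^{\ast}_{\ast}(T_{2,q})$ and $\khr^{\ast}_{\ast}(T_{2,4+q})$ and applying the shifts $[-4,-3]$ and $[0,-1]$ gives the $E_1$-page of the $\delta$-graded total exact sequence: two rows, the $D_0$-row in $\delta$-degree $-q-2$ at $i \in \{-4,-6,-7,\ldots,-4-q\}$ and the $D_1$-row in $\delta$-degree $-q-4$ at $i \in \{0,-2,-3,\ldots,-4-q\}$. Since neither row touches $(-5,-q-2)$, the stated emptiness of that entry is already visible on the $E_1$-page.

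Because the differential of the total sequence has bi-degree $(1,-2)$, the only candidate non-trivial maps are of the form $\partial^i: (D_0\text{-piece at }(i,-q-2)) \to (D_1\text{-piece at }(i+1,-q-4))$. The base case $q=0$ is handled by grading in Example \ref{T332q}. For $q \geq 1$, I would induct using the naturality of the total exact sequence under the canonical inclusion $\widetilde{C}(T_{(3,3),(2,q)}) \hookrightarrow \widetilde{C}(T_{(3,3),(2,q+1)})$ obtained by adding one more half-twist; on the two rows of the $E_1$-page this inclusion restricts to the standard injections $\khr^{\ast}_{\ast}(T_{2,q}) \hookrightarrow \khr^{\ast}_{\ast}(T_{2,q+1})$ and $\khr^{\ast}_{\ast}(T_{2,4+q}) \hookrightarrow \khr^{\ast}_{\ast}(T_{2,5+q})$ from Example \ref{2strandedlimit}. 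Hence any non-trivial differential at level $q$ persists at level $q+1$, and the surviving generators at level $q$ map injectively into those at level $q+1$.

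The main obstacle is committing to the exact pattern of non-vanishing differentials, since naturality under the inclusions above only propagates non-vanishing, not vanishing. To close the argument I would exploit the module viewpoint of Section 2.3: the underlying $3$-braid in $T_{(3,3),(2,q)}$ is $(2,1)$-admissible, so the limit $\khr^{\ast,\ast}(T_{(3,3),(2,\infty)})$ acquires a graded module structure over the ring $\khr^{\ast,\ast}(T_{2,\infty})$ computed in Theorem \ref{Th2inf}. Combined with the absorption isomorphisms of Lemma \ref{absorb}(ii) to slide extra crossings into the added twists, this module structure propagates the base case through all $q$ and forces the differentials of the total exact sequence to be exactly those needed for the surviving generators to match the grid in the statement.
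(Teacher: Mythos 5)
Your proposal follows the paper's strategy in its broad strokes: set up the total exact sequence for the exact triple $(T_{2,4+q},\,T_{(3,3),(2,q)},\,T_{2,q})$, pass to the $(2,2)$-limit so that everything becomes a $\khr^{\ast}_{\ast}(T_{2,\infty})$-module, use the $q=0$ case as an anchor, and conclude the finite cases from the limit. But the crux of the argument is never actually carried out, only pointed at. The precise chain of reasoning you need is: (a) in the limit total exact sequence, the $D_0$-row is the rank-one free module $\khr^{\ast}_{\ast}(T_{2,\infty})[-4,-2]$ with generator $z$; (b) $z$ lies in the image of the injective canonical projection $\varphi_0$ from the $q=0$ stage, the $q=0$ differential vanishes (this is read off by comparing dimensions with the known $\khr^{\ast}_{\ast}(T_{3,3})$, not merely \emph{by grading} as you write), and projections intertwine the finite total sequences with the limit one, so $d(z)=0$; (c) since the limit differential is $\khr^{\ast}_{\ast}(T_{2,\infty})$-linear and $z$ generates a free module, $d$ kills the entire $D_0$-row; (d) for each finite $q$ the injective $\varphi_q$ embeds the finite total sequence into the limit one, so every finite differential is zero, and the grid follows. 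Writing ``this module structure propagates the base case through all $q$ and forces the differentials'' skips the entire content of steps (b)–(d).

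Two further points. The intermediate induction from $q$ to $q+1$ via naturality of the inclusions, which you yourself flag as insufficient since inclusions only propagate \emph{non}-vanishing, is a detour: the paper's argument goes directly to the limit and avoids it. And there are small slips to correct: the twists sit on the second and third strands, so the relevant limit is the $(2,2)$-limit $\khr^{\ast}_{\ast}(T_{3,3},2,2)$, not $(2,1)$; the role of absorption (Lemma \ref{absorb}(ii)/Example \ref{shiftedinf}) in the paper is only to account for the shift corrections when $D_0$ at $q=0$ is a two-component unlink, not to propagate anything; and your $D_1$ row is off by one in homological degree (with $w_+=0$ the $\ast+1$ in the definition places it at $i\in\{-1,-3,-4,\ldots,-q-5\}$).
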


\begin{proof}
	The proof is done in two steps, first we compute $\widetilde{Kh}^{\ast}_{\ast}(T_{3,3},2,2)$, then we use this information to understand all finite cases. For fixed $q$, the exact triple is given by $(T_{2,q},T_{(3,3),(2,q)},T_{2,q+4})$ and the corresponding shift is $w_{-}=-4$. Since all diagrams are $(2,2)$-admissible, we obtain the limit total sequence:
	
	\hspace*{-0.425in}
	\includegraphics[scale=0.75]{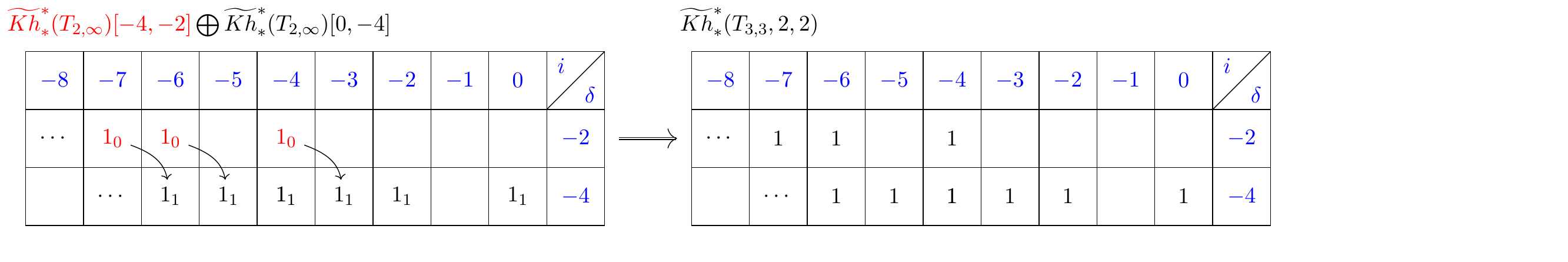}

	Let $z$ denote the generator of the $(-4,-2)$ entry of the left grid. This generator lies in the image of the canonical projection from $q=0$
	\[
	\varphi_0: \widetilde{Kh}^{\ast}_{\ast}(U \coprod U)[-4,-3] \longrightarrow \khr^{\ast}_{\ast}(T_{2,\infty})[-4,-2].
	\]
	Here there is a correction of shifts, due to the fact that the first diagram of our sequence is a $2$-component unlink (see Example \ref{shiftedinf}). Since the differential in the first stage (at $q=0$) is zero (see Example \ref{T332q}) then the limit differential from the entry $(-4,-2)$ to the entry $(-3,-4)$  must also be zero, i.e $d(z)=0$. Moreover, the total exact sequence is compatible with the module structure and $z$ generates a rank one free module, so the limit differential must be identically zero. It follows that $\widetilde{Kh}^{\ast}_{\ast}(T_{3,3},2,2)$ is given by the grid on the right. Let us now turn to the finite cases for $q \geq 1$. The corresponding pair of grids is given below.

	\hspace*{-0.425in}
	\includegraphics[scale=0.75]{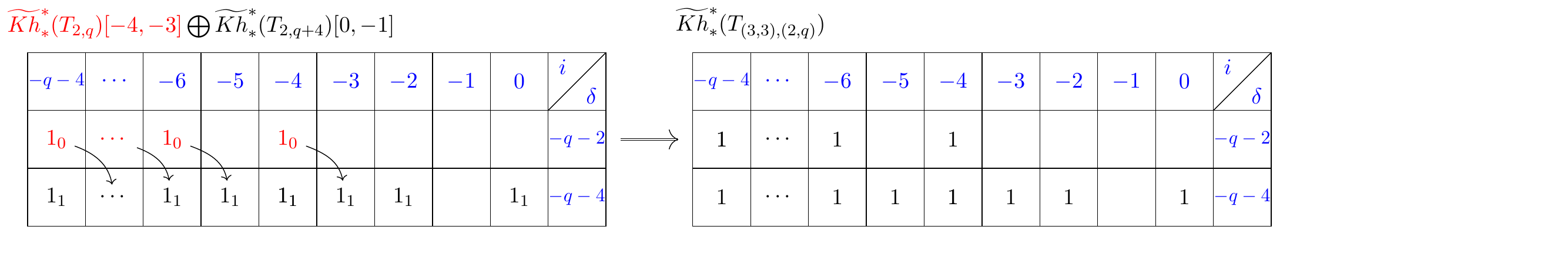}	
		
	One deduces the resulting grid by using the projections
	\[
	\varphi_q: \widetilde{Kh}^{\ast}_{\ast}(T_{2,q})[-4,-3] \longrightarrow \widetilde{Kh}^{\ast}_{\ast}(T_{2,\infty})[-4,-q - 2].
	\]
	Again, the shifts on the right hand side had to be corrected, since we start with $T_{2,q}$ and we have to absorb $q-1$ negative crossings.	These projections are injective: we can consider the finite total long exact sequence as a part of the limit one above. Thus it has the same differential, namely the zero map, and the result follows.
\end{proof}

The reader should be aware that the links $T_{(3,3),(2,q)}$ are isotopic to $3$-stranded pretzel links $P(-q-2,2,-2)$, whose (rational) homology has been computed by Manion \cite{manion2014rational}. Moreover note that along the way we have also shown that the projections
\[
	\varphi_q:\widetilde{Kh}^{\ast}_{\ast}(T_{(3,3),(2,q)}) \longrightarrow \widetilde{Kh}^{\ast}_{\ast}(T_{3,3},2,2)
\]
are all injective. We now turn to a similar family, based on the $T_{3,4}$ torus link. This family $T_{(3,4),(2,q)}$ is given by the middle diagram in the next figure.

\begin{center}
	\includegraphics[scale=0.6]{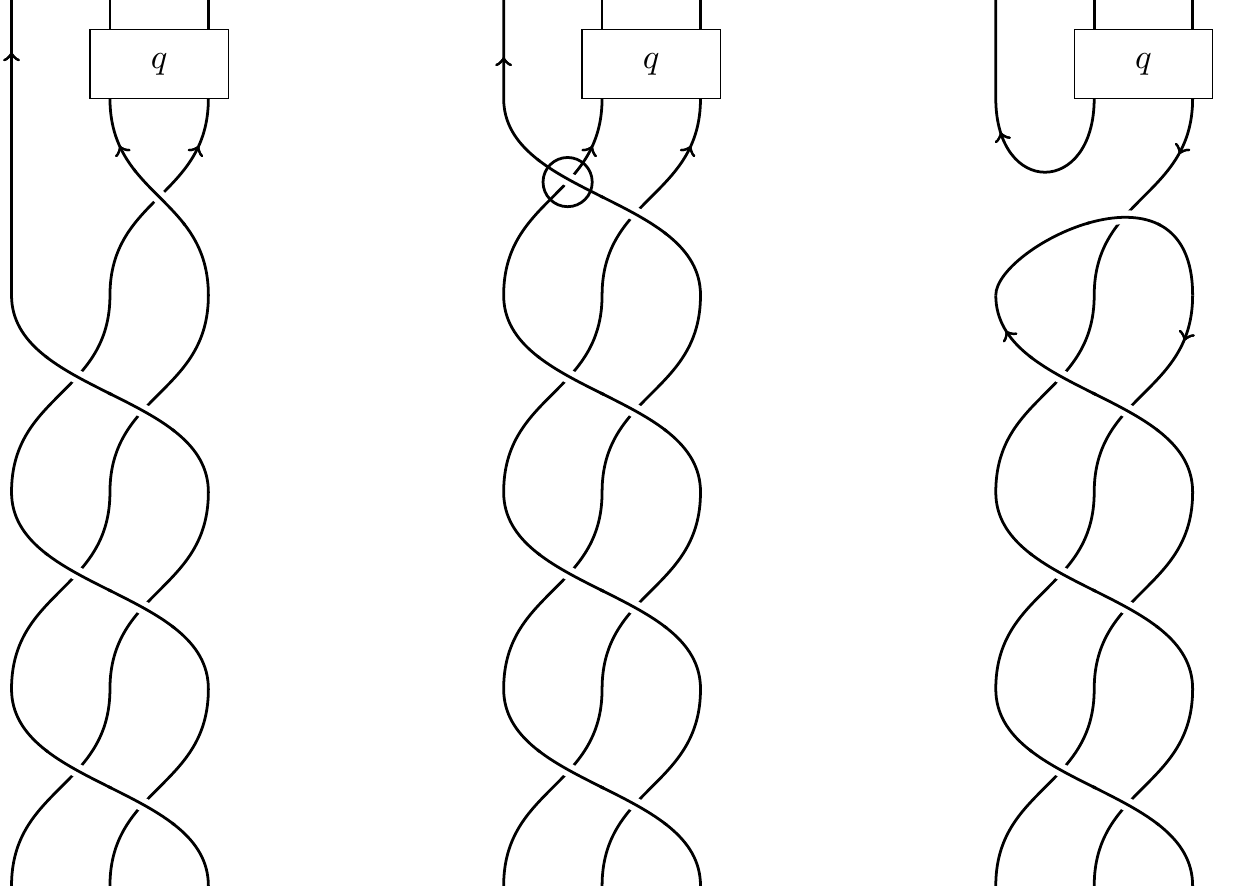}
	\captionof{figure}{The exact triple ($D_1,T_{(3,4),(2,q)},D_0)$}\label{T34triple}	
\end{center}

\begin{proposition}\label{342q}
	The $\delta$-graded homology of $T_{(3,4),(2,q)}$ is given by the grid below, where the $(-5,-q-4)$ slot is always empty:

	\medskip
	\hspace*{1in}
	\includegraphics{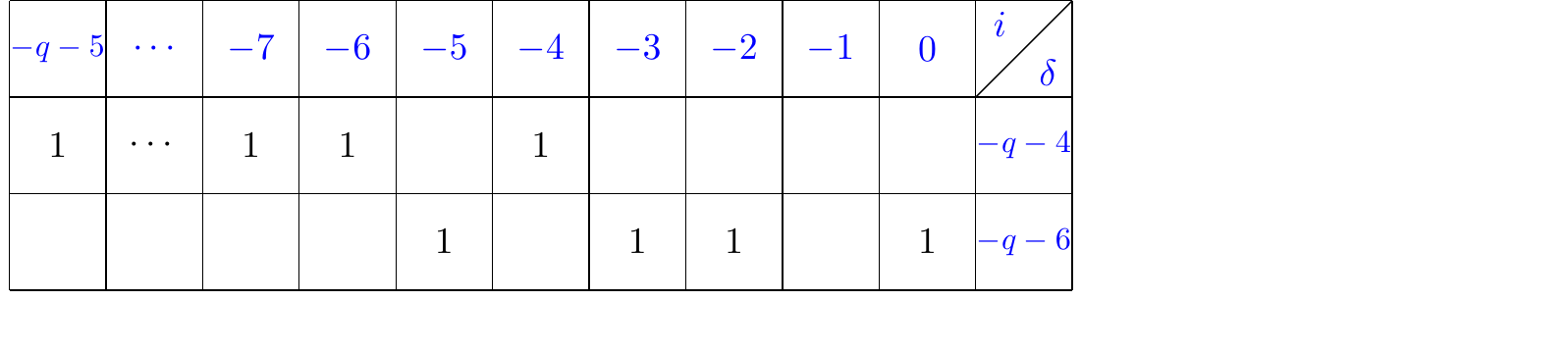}
	 
\end{proposition}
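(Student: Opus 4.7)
The plan is to follow the strategy of Proposition \ref{332q} almost verbatim, computing the $(2,2)$-limit $\widetilde{Kh}^{\ast}_{\ast}(T_{3,4},2,2)$ first and then transferring the result to each finite $q$ via the canonical projections $\varphi_q$.

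First I would set up the total long exact sequence associated to the circled crossing in Figure \ref{T34triple}. Inspecting the triple $(D_1, T_{(3,4),(2,q)}, D_0)$ one identifies $D_0$ and $D_1$ as two-stranded torus links $T_{2,q+a}$ and $T_{2,q+b}$ (up to a few absorbable Reidemeister twists, handled by Lemma \ref{absorb}(ii)); the exponents $a,b$ and the shift $w_{-}$ are computed directly from the crossing counts, exactly as was done for $T_{(3,3),(2,q)}$. Since all three diagrams are $(2,2)$-admissible, Lemma \ref{absorb}(iii) lets us pass to the limit and assemble the limit total exact sequence for $\widetilde{Kh}^{\ast}_{\ast}(T_{3,4},2,2)$ out of two appropriately shifted copies of $\widetilde{Kh}^{\ast}_{\ast}(T_{2,\infty})$, whose description is known from Example \ref{2strandedlimit}.

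Next I would show that the limit differential is zero. This is the heart of the argument and proceeds as in Proposition \ref{332q}: any potentially non-trivial differential would originate from a free $\khr^{\ast}_{\ast}(T_{2,\infty})$-generator $z$ lying in the image of the canonical projection from the stage $q=0$, namely $\varphi_0$ applied to $\widetilde{Kh}^{\ast}_{\ast}(T_{3,4})$. Since $\widetilde{Kh}^{\ast}_{\ast}(T_{3,4})$ is fully known (Turner \cite{turner2008spectral}, Sto\v{s}i\'c \cite{stovsic2009khovanov}), one reads off that $d(z)=0$ at $q=0$, and by functoriality the limit differential vanishes on $z$; then compatibility of the total exact sequence with the $\widetilde{Kh}^{\ast}_{\ast}(T_{2,\infty})$-module structure (again Lemma \ref{absorb}(iii)) forces the limit differential to vanish on the entire submodule generated by $z$, which exhausts the relevant entries. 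This determines $\widetilde{Kh}^{\ast}_{\ast}(T_{3,4},2,2)$.

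Finally, for each finite $q \geq 1$, I would apply the projection
\[
\varphi_q: \widetilde{Kh}^{\ast}_{\ast}(T_{2,q})[w_{-},w_{-}+1] \longrightarrow \widetilde{Kh}^{\ast}_{\ast}(T_{2,\infty})[w_{-},\,-q-4]
\]
(with a shift correction accounting for the $q-1$ negative crossings to absorb, as in the proof of Proposition \ref{332q}). These projections are injective in the bidegrees that matter, so the finite total exact sequence sits inside the limit one and inherits the vanishing of its differential; combining the two columns of the resulting grid produces the claimed table, and the $(-5,-q-4)$ slot is empty because the corresponding entry in the limit grid is zero. The main obstacle is purely bookkeeping: correctly identifying $D_0$ and $D_1$ up to absorbable crossings, tracking the shift $w_{-}$ through the $1$-smoothing of the topmost full twist at every stage, and checking that the projections $\varphi_q$ are injective on the relevant bidegrees. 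Once this is done, no new idea beyond Proposition \ref{332q} is required.
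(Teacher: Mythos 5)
Your proposal breaks down at the central step. You claim the argument proceeds "verbatim" from Proposition \ref{332q} and that the limit differential vanishes, but this is exactly where $T_{3,4}$ differs from $T_{3,3}$: the paper's computation shows that the unique potentially non-trivial differential in the total exact sequence for $T_{3,4}$ is in fact \emph{non-zero} and restricts to an isomorphism on the $(-5,-4)$ entry. The resulting cancellation of two free $\khr^{\ast}_{\ast}(T_{2,\infty})$-summands is precisely what produces the torsion module $\khpqd{2}{\infty}/(a_2^2=a_3^2=0)$ appearing in $\khr^{\ast}_{\ast}(T_{3,4},2,2)$, as recorded just after Proposition \ref{342q} and later used in the inductive computation of $\khpqd{3}{\infty}$. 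If the differential were zero, as you assert, you would obtain a free module and a strictly larger homology, contradicting the known answer for $\khr^{\ast}_{\ast}(T_{3,4})$.

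There is a second, related error: you identify both $D_0$ and $D_1$ as two-stranded torus links "up to absorbable twists." In fact the exact triple is $(T_{(3,3),(2,q+1)},\, T_{(3,4),(2,q)},\, T_{2,q+1})$, so $D_1$ belongs to the family of Proposition \ref{332q}, not to the $T_{2,q}$ family, and the shift is the constant $w_{-}=-5$ rather than something read off from a $T_{2,q+a}$ identification. With the wrong $D_1$ and the wrong differential, the bookkeeping cannot be salvaged. The missing idea is that one must actually \emph{compare} the two grids at $q=0$ against the known $\khr^{\ast}_{\ast}(T_{3,4})$, observe that a cancellation is forced, transport the non-zero differential to the limit via the injective projection $\varphi_0$, and then note that both the generator $v$ and its image $d(v)$ support rank-one free modules which therefore cancel completely in the limit.
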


\begin{proof}
	The proof follows the same strategy as the one for Proposition \ref{332q}. We begin with the $T_{3,4}$ case, then compute a limit, from which we extract information about the finite cases. From Figure \ref{T34triple}, we find the exact triple $(T_{(3,3),(2,q+1)},T_{(3,4),(2,q)},T_{2,q+1})$. All $3$ diagrams are $(2,2)$-admissible so we will have a total exact sequence for any fixed $q$, and a limit one. The shift is given by $w_{-}= 3+q -(8+q)=-5$. We begin our study with case $q=0$. In this case we have the link $T_{(3,4),(2,0)}=T_{3,4}$ whose homology is known.
	
	\hspace*{0.3in}
	\includegraphics[scale=0.75]{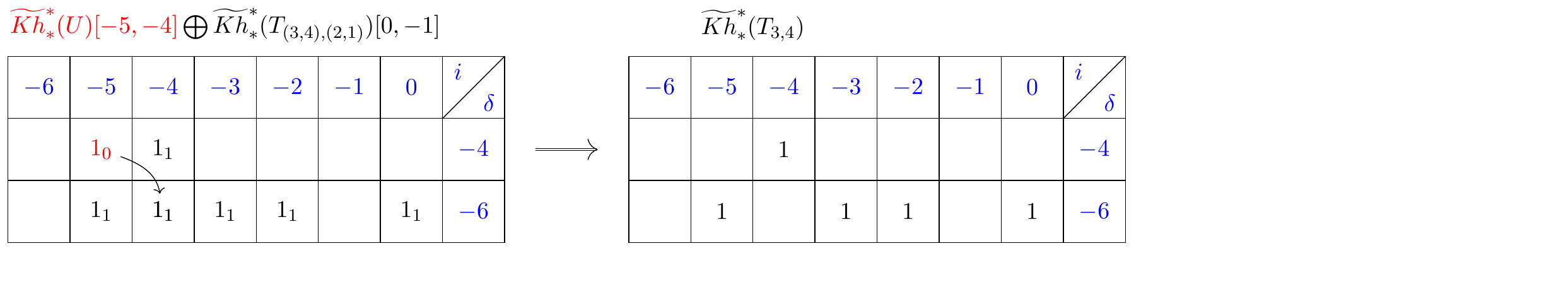}

	Comparing the data of the two grids above, one realises that the only possibly non-trivial differential (the arrow) is indeed non trivial, and even an isomorphism when restricted to the $(-5,-4)$ entry. For the limit case, we have the two grids below.

	\hspace*{-0.425in}
	\includegraphics[scale=0.75]{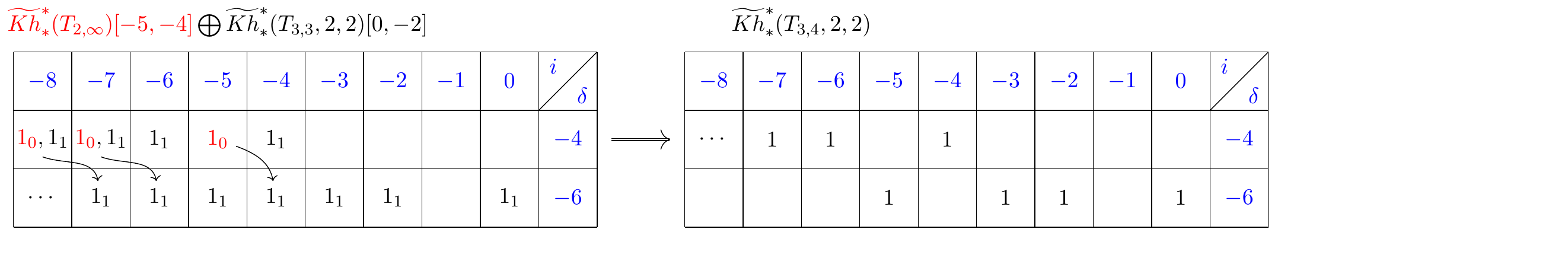}

	Let $v$ be the generator of the module $\widetilde{Kh}^{\ast}_{\ast}(T_{2,\infty})[-5,-4]$. Clearly $v$ lies in the image of the injective projection
	\[
	\varphi_q: \khr^{0}_{0}(U)[-5,-4] \longrightarrow \widetilde{Kh}^{0}_{0}(T_{2,\infty})[-5,-4].
	\] 
	Since the differential restricted to the $(-5,-4)$ entry in the first stage ($q=0$) is an isomorphism, and the projection is injective, then the restricted limit differential must also be non zero. For dimensional reasons, it must be an isomorphism (when restricted). Both $v$ and $d(v)$ generate a rank one free module, so these two modules must cancel each other. Thus the grid on the right. To extract information about the finite case, consider the pair of grids is given below.

	\hspace*{-1in}
	\includegraphics[scale=0.75]{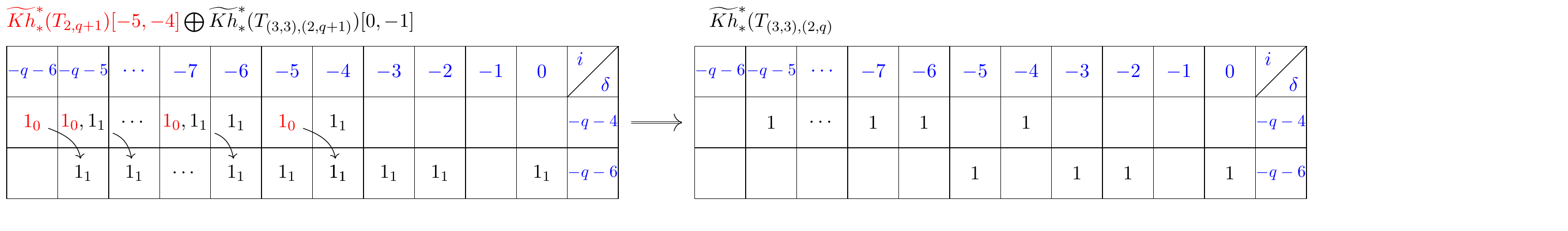}

	Since the projections are injective, we can again consider the total long exact sequence as a part of the limit one, which implies the differential restricts on each entry to a non-zero map. The result follows.
\end{proof}

As a final comment on these computations, let us mention that along the way, we have actually proved that there are isomorphism of $\khpqd{2}{\infty}$-modules
\[
\begin{array}{lcl}
\widetilde{Kh}^{\ast}_{\ast}(T_{3,3},2,2) & \cong & \widetilde{Kh}^{\ast}_{\ast}(T_{2,\infty})[0,-4] \bigoplus \widetilde{Kh}^{\ast}_{\ast}(T_{2,\infty})[-4,-2], \\
\khr^{\ast}_{\ast}(T_{3,3N+1},2,2) & \cong & \dfrac{\khpqd{2}{\infty}}{(a_2^2=a_3^2=0)}\left[0,-6 \right]  \bigoplus \widetilde{Kh}^{\ast}_{\ast}(T_{2,\infty})[-4,-4].
\end{array}
\]

\subsection{The algebra $\khpqd{3}{\infty}$}
We now turn our attention to the algebra structure for the $3$-stranded case. Again we will rely heavily on the fact that the homology of any $3$-stranded torus link is known. First we will study a particular $\khpqd{2}{\infty}$-module structure for $\khpqd{3}{\infty}$. From this point on we will be able to identify our algebra structure, thus proving the main theorem of this section. 
 
 \begin{proposition}
 	There is a bi-graded $\khpqd{2}{\infty}$-module isomorphism
 	\[
 	\khpqd{3}{\infty} \cong \bigoplus\limits_{i=0}^{\infty} \khpqd{2}{\infty}/(a_2^2=a_3^2=0)[-4i,2i],
 	\]
 	where $a_2$ and $a_3$ are the generators of the algebra $\khpq{2}{\infty}$ in homological degrees $-$2 and $-3$ respectively.
 \end{proposition}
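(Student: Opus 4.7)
The plan is to identify explicit generators of $\khpqd{3}{\infty}$ as a $\khpqd{2}{\infty}$-module and verify the defining relations using the computations of Propositions \ref{332q} and \ref{342q} together with the explicit bi-graded dimensions of $\khr^{\ast}_{\ast}(T_{3,q})$ (Turner, Sto\v{s}i\'c). My first step is to identify a distinguished class $z \in \khpqd{3}{\infty}$ in bi-degree $(-4, 2)$, realized as the image under the canonical projection $\varphi_4 : \khr^{\ast}_{\ast}(T_{3,4})[0,6] \to \khpqd{3}{\infty}$ of the unique non-zero class in that bi-degree. By the algebra structure of Theorem \ref{algstruc}, each power $z^i$ then lies in bi-degree $(-4i, 2i)$ and arises as the projection of a class in $\khr^{\ast,\ast}(T_{3, 3i+1})$.

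Next, I would show that for every $i \geq 0$ the cyclic $\khpqd{2}{\infty}$-submodule generated by $z^i$ is isomorphic to $\khpqd{2}{\infty}/(a_2^2, a_3^2)[-4i, 2i]$. For $i = 1$ this essentially restates the observation made just after the proof of Proposition \ref{342q}: the module $\khr^{\ast,\ast}(T_{3,4}, 2, 2)$ contains $\khpqd{2}{\infty}/(a_2^2, a_3^2)[0,-6]$ as a direct summand which, under the canonical projection to the limit, covers the submodule of $\khpqd{3}{\infty}$ generated by $z$. The vanishing relations $a_2^2 \cdot z = a_3^2 \cdot z = 0$ are read off directly from the bi-graded structure of that summand. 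For general $i$, these relations propagate through the algebra structure by associativity: $a_2^2 \cdot z^i = (a_2^2 \cdot z) \cdot z^{i-1} = 0$, and similarly for $a_3^2 \cdot z^i$.

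Finally, I would verify that these cyclic submodules are pairwise disjoint and together exhaust $\khpqd{3}{\infty}$ by a bi-graded dimension count: the Poincar\'e series of the claimed right-hand side equals
\[
\frac{1 + t^{-2} + t^{-3} + t^{-5}}{1 - t^{-4} q^{2}},
\]
which should coincide with the Poincar\'e series of $\khpqd{3}{\infty}$ extracted from the explicit $\khr^{\ast}_{\ast}(T_{3,q})$ through the direct limit. The main obstacle will lie in the second step: one must check that the classes $z^i, a_2 z^i, a_3 z^i, a_2 a_3 z^i$ are all non-zero in the limit $\khpqd{3}{\infty}$, so that the cyclic submodule generated by $z^i$ is genuinely isomorphic to $\khpqd{2}{\infty}/(a_2^2, a_3^2)[-4i, 2i]$ rather than a proper quotient thereof. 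This non-vanishing can be checked either by lifting representative cycles to $\khr^{\ast}_{\ast}(T_{3, 3i+1})$ and verifying they survive under $\varphi_{3i+1}$, or by matching Poincar\'e series term by term in each bi-degree.
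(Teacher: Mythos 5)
Your proposal runs into a circularity problem with the paper's logical structure. The Proposition is proved \emph{before} Theorem~\ref{Algebra3}; indeed Theorem~\ref{Algebra3} takes the module decomposition furnished by the Proposition as its starting point and then proves the genuinely new fact that the generators of successive copies of $K:=\khpqd{2}{\infty}/(a_2^2,a_3^2)$ are the powers $z, z^2, z^3, \dots$ of a single class. Your plan, by contrast, wants to \emph{define} the $i$th copy of $K$ as the cyclic submodule generated by $z^i$, then verify non-vanishing of $z^i$ by ``lifting representative cycles to $\khr^{\ast}_{\ast}(T_{3,3i+1})$ and verifying they survive.'' That verification is precisely the content of Theorem~\ref{Algebra3} and Lemma~\ref{Alg3} (the surjectivity of fusion maps over three strands) and is not a free consequence of Propositions~\ref{332q} and~\ref{342q}. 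In effect you would be proving the Proposition and the Theorem simultaneously, which is a different (and harder) plan than the paper's.

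Your fallback via Poincar\'e series has a second, independent gap: matching bi-graded dimensions establishes at best a vector-space isomorphism, not a $\khpqd{2}{\infty}$-module isomorphism. To read off that $a_2^2$ and $a_3^2$ annihilate the generator in bi-degree $(-4i,2i)$, you appeal to the direct-summand structure of $\khr^{\ast,\ast}(T_{3,3i+1},2,2)$ for all $i$, but that structure is only established in the paper for $i=1$ (Propositions~\ref{332q},~\ref{342q}); for $i\geq 2$ it is exactly what is being proved. The paper's actual argument avoids all of this: it proceeds \emph{inductively} on $N$ in the $(2,2)$-limit setting, showing that passing from $T_{3,3N+1}$ to $T_{3,3(N+1)}$ and from $T_{3,3N}$ to $T_{3,3N+1}$ each add one shifted copy of $\khpqd{2}{\infty}$ to the exact-triple decomposition, and then uses Turner's known behavior of the differentials (zero in the $3N$ case, an isomorphism in the $3N+1$ case) to determine which pieces survive or cancel. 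Because the total exact sequence is a sequence of $\khpqd{2}{\infty}$-modules, the module structure comes for free at each stage, with no reference to any candidate polynomial generator $z$. You would need either to adopt this inductive strategy, or to prove the fusion-map surjectivity (Lemma~\ref{Alg3}) and the $z$-power identification up front, before the conclusion you want becomes available.
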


 \begin{proof}
 	 The strategy for this proof is similar to the one used by Turner. We build the limit inductively, by showing that the two statements below are true.
 	\begin{enumerate}
 		\item{For any $N\geq 1$, there are graded $\khpqd{2}{\infty}$-module isomorphisms
 			\[
 			\begin{array}{ccl}
 			\khr^{\ast}_{\ast}(T_{3,3N},2,2) & \cong & \khpqd{2}{\infty}[-4N,-4N+2] \bigoplus \khpqd{2}{\infty}[-4(N-1),-4N]\\
 			&&\bigoplus_{0 \leq i \leq N-2} \dfrac{\khpqd{2}{\infty}}{(a_2^2=a_3^2=0)}\left[-4i,-6N+2+2i \right].
 			\end{array}
 			\]
 		}
 		\item{For any $N\geq 1$, there are graded $\khpqd{2}{\infty}$-module isomorphisms
 			\[
 			\begin{array}{ccl}
 			\khr^{\ast}_{\ast}(T_{3,3N+1},2,2) & \cong & \khpqd{2}{\infty}[-4N,-4N]\\
 			&&\bigoplus_{0 \leq i \leq N-1} \dfrac{\khpqd{2}{\infty}}{(a_2^2=a_3^2=0)}\left[-4i,-6N+2i \right],
 			\end{array}
 			\]
 		}
 		where $a_2$ and $a_3$ are the generators of $\khpqd{2}{\infty}$ in homological degree $-2$ and $-3$ respectively.	
 	\end{enumerate}
	The two cases for $N=1$ have already been treated in Propositions \ref{332q} and \ref{342q} respectively. We will show that if $(1)$ is true for $T_{3,3N}$ then $(2)$ holds for $T_{3,3N+1}$ and that if $(2)$ holds for $T_{3,3N+1}$, then $(1)$ holds for $T_{3,3(N+1)}$. Note that we can ignore the $T_{3,3N+2}$ torus knots, courtesy of Lemma \ref{intsteps}. Let us first compute the shifts associated to our long exact sequences for $T_{3,3N}$ and $T_{3,3N+1}$. With the orientations provided in Figure \ref{T33N}, we have
 	\[
 	w_{-}(T_{3,3N})=2N-6N=-4N, \hspace{1cm} w_{-}(T_{3,3N+1})=2N+1-(6N+2)=-4N-1.
 	\]
 	For any $N\geq 2$, the rest of the proof is identical to the base case, with arguments verbatim from Propositions \ref{332q} and \ref{342q}. We know from Turner's work that the total exact sequence for $T_{3,3N}$ using the top-most, left-most crossing has a trivial differential. This differential is transported to the limit exact sequence by the projections, thus the whole copy of $\khpqd{2}{\infty}[-4N,-4N+2]$ survives. Similarly, the differential in the total exact sequence for $T_{3,3N+1}$ is non-zero, and maps - at the limit - a generator of a copy of $\khpqd{2}{\infty}$ to an element that generates a rank one free $\khpqd{2}{\infty}$-module, namely $a_2^2$. Thus the whole copy disappears.

 	The description of the $\khpqd{2}{\infty}$-module structure is a straight-forward consequence of $(1)$ and $(2)$. When $q$ grows, we add a copy of $\khpqd{2}{\infty}$, which is then partially cancelled. This process repeats at each step. One should then proceed to shift this limit module so that the unique element in homological degree $0$ also has $\delta$-grading $0$. Since $\khr^{0}_{\ast}(T_{3,3N+1})$ is supported in $\delta$-grading $-6N$, we apply a $[0,6N]$ at each step. This concludes the proof.
 \end{proof}

 \begin{figure}
	\includegraphics[scale=0.75]{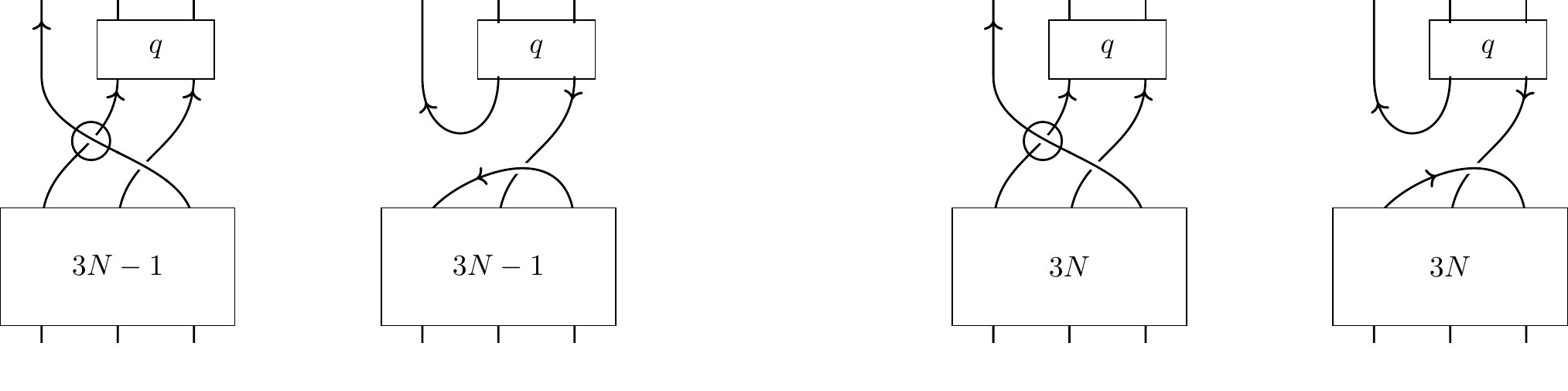}
	\captionof{figure}{On the left: the link $T_{(3,3N),(2,q)}$ and a $0$-smoothing companion. On the right: the link $T_{(3,3N+1),(2,q)}$ and a $0$-smoothing.}
	\label{T33N}
\end{figure}

 Before discussing the algebra, let us mention that one can also compute the homology of all intermediate diagrams, $T_{(3,3N),(2,q)}$ and $T_{(3,3N+1),(2,q)}$, by following the procedure of Propositions \ref{332q} and \ref{342q}.
 
 Thus $\khr^{\ast}_{\ast}(T_{3,\infty})$ should be understood as an infinite number of shifted copies of the row (which is a torsion module) of the form
 \[
 \begin{tabular}{c|c|c|c|c|c}
 &     &  &   & &     \\
 \hline  
 $1$& & $1$ & $1$ &      &  $1$ \\
 \hline
 &&&&&
 \end{tabular}
 \]
 In particular, the vector space $\khr^{-4N}_{2N}(T_{3,\infty})$ is one dimensional for all $N \geq 0$. From this point of view, it makes sense for us to look for a polynomial generator of $\khr^{\ast}_{\ast}(T_{3,\infty})$ - now seen as an algebra - that would take one row to the next one. This consideration leads to the main theorem of this section.
 
 \begin{theorem}\label{Algebra3}
 	There is a bi-graded algebra isomorphism
 	\[
 	\khr^{\ast}_{\ast}(T_{3,\infty}) \cong \bbZ_2[x,y,z]/(x^2=y^2=0),
 	\]
 	where $|x|=(-2,0),|y|=(-3,0)$ and $|z|=(-4,2)$.
 \end{theorem}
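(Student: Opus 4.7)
The plan is to leverage the $\khpqd{2}{\infty}$-module isomorphism just established to read off the bi-graded dimensions of $\khpqd{3}{\infty}$, and then build an algebra map from $\bbZ_2[x,y,z]/(x^2,y^2)$ and show it is a bijection bi-degree by bi-degree. Since the 4-dimensional quotient $\khpqd{2}{\infty}/(a_2^2 = a_3^2 = 0)$ has basis $\{1, a_2, a_3, a_2 a_3\}$ in bi-degrees $(0,0), (-2,0), (-3,0), (-5,0)$, the module decomposition gives $\dim \khr^{i}_{\delta}(T_{3,\infty}) = 1$ precisely when $(i,\delta) = (-2a-3b-4c, 2c)$ for some $a,b \in \{0,1\}$ and $c \geq 0$, and $0$ otherwise. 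These are exactly the bi-degrees of the monomial basis $\{x^a y^b z^c\}$ of $\bbZ_2[x,y,z]/(x^2,y^2)$ with the claimed degrees, so the two bi-graded vector spaces have matching Poincaré series.

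I would then pick $x, y, z$ to be non-zero elements of $\khr^{-2}_0, \khr^{-3}_0, \khr^{-4}_2$ of $\khpqd{3}{\infty}$ respectively (each choice is unique up to a non-zero scalar). The bi-degrees $(-4,0)$ and $(-6,0)$ are empty by the dimension count, so the relations $x^2 = 0$ and $y^2 = 0$ hold for free, yielding a well-defined bi-graded algebra homomorphism $\psi: \bbZ_2[x,y,z]/(x^2,y^2) \to \khpqd{3}{\infty}$. Since both sides agree dimensionally in every bi-degree and each graded piece is at most $1$-dimensional, showing that $\psi$ is an isomorphism reduces to verifying that each monomial image $\psi(x^a y^b z^c)$ is non-zero.

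The crux is therefore to prove that $z^c \neq 0$ for every $c \geq 1$; the non-vanishing of $x z^c, y z^c, x y z^c$ will then follow from the compatibility of the algebra multiplication with the $\khpqd{2}{\infty}$-module action, i.e.\ the fact that the fusion movie defining multiplication by $x$ (respectively $y$) can be resolved into the movie defining the action of $a_2$ (respectively $a_3$), so that $x \cdot z^c$ coincides with $a_2 \cdot z^c$ inside the $c$-th summand $M_c$ of the module decomposition, which is non-zero as soon as $z^c$ is. To establish $z^c \neq 0$, I would proceed inductively. The element $z$ lifts to a distinguished generator $\tilde{z} \in \khr^{-4}_{-4}(T_{3,4})$ along the canonical projection $\varphi_4$, and each iterated product $z^{c+1}$ is the image under $\varphi_{4(c+1)}$ of the finite fusion $\widetilde{\Sigma}^3_{4,4c}(\tilde{z} \otimes \tilde{z}^{\otimes c})$. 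The hard part, and the main obstacle, is tracking this chain-level computation: one must identify the output of the finite fusion map with the unique surviving generator of $\khr^{-4(c+1)}_{-4(c+1)+?}(T_{3,4(c+1)})$ whose projection into $\khpqd{3}{\infty}$ is the top generator $\zeta_{c+1}$ of the $(c+1)$-st layer $M_{c+1}$. This is analogous in spirit to the surjectivity argument of Lemma \ref{2fusion} in the $2$-stranded case, and I would expect to resolve it by combining the naturality of fusion with respect to the long exact sequences of Lemma \ref{absorb}(iii) with the explicit form of $\khr^{\ast}_{\ast}(T_{3,3N})$ and $\khr^{\ast}_{\ast}(T_{3,3N+1})$ used in the module proposition.
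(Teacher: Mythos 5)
Your setup and reduction are sound and follow essentially the same path as the paper. The module decomposition does give the bi-graded dimension count you state (each $\khpqd{2}{\infty}/(a_2^2=a_3^2=0)$-summand contributes one-dimensional pieces in bi-degrees $(-4c,2c)$, $(-4c-2,2c)$, $(-4c-3,2c)$, $(-4c-5,2c)$, exactly matching the monomials $x^ay^bz^c$), and the relations $x^2=y^2=0$ hold for free since $\khr^{-4}_{0}$ and $\khr^{-6}_{0}$ vanish. You also correctly identify that everything rests on showing $z^c\ne 0$ for all $c$, and that the non-vanishing of $x^ay^bz^c$ for $a,b\in\{0,1\}$ follows from compatibility of the fusion product with the $\khpqd{2}{\infty}$-module action, since each $z^c$ must generate a rank-one $\khpqd{2}{\infty}$-submodule (which, when quotiented, is a full copy of $K$).

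However, there is a genuine gap: the non-vanishing of $z^c$ is asserted but not proved. You describe the key step (identifying the output of the finite fusion map with the surviving generator of the relevant bi-degree in $\khr^{\ast}_{\ast}(T_{3,\cdot})$, transported through the canonical projections) and then say you ``would expect to resolve it'' by analogy with Lemma~\ref{2fusion}. That analogy is not automatic: the $2$-stranded surjectivity argument exploits thinness of all diagrams in the completed triple, whereas here the diagrams are connected sums involving $T_{3,3}$, and surjectivity is instead obtained from the direct-sum decomposition $\khr^{\ast,\ast}(T_{3,3})\cong\khr^{\ast,\ast}(U\coprod U)[-4,-11]\oplus\khr^{\ast,\ast}(T_{2,4})[0,-1]$ together with Lemma~\ref{homalg} and the ``completing the triple'' construction. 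This is exactly the content of Lemma~\ref{Alg3} in the paper, which you would need to prove (or an equivalent surjectivity statement for the $T_{3,4}$-based fusion you propose) before the induction closes. Without this, the argument that $z$ is a polynomial generator, i.e.\ that $z\cdot z_N=z_{N+1}$, is not established.

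A smaller point: you lift $z$ to $\widetilde{Kh}^{-4}_{-4}(T_{3,4})$ and fuse via $\widetilde{\Sigma}^3_{4,4c}$, whereas the paper uses $T_{3,3}$ and $\widetilde{\Sigma}^3_{3,3N}$. Either base case could in principle work, but the notation $\tilde z\otimes\tilde z^{\otimes c}$ does not parse as an input to $\widetilde{\Sigma}^3_{4,4c}$ (whose second factor is $\widetilde{Kh}(T_{3,4c})$, not $\widetilde{Kh}(T_{3,4})^{\otimes c}$); you would need to phrase this as an inductive statement about iterated fusion images, and then the surjectivity issue above is precisely where the induction lives.
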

 
 \begin{proof}
 	As mentioned above, we only need to show that there is a polynomial generator  connecting two consecutive copies of $K:=\khpqd{2}{\infty}/(a_2^2=a_3^2=0)$. For $N\geq 1$, let $z_N$ be the generator of the $N$th copy $K[-4N,2N]$ of $K$ inside $\khr^{\ast}_{\ast}(T_{3,\infty})$. The element $z_N$ has bi-degree $(-4N,2N)$ and generated $\khr^{-4N}_{2N}(T_{3,\infty}) \cong \bbZ_2$. It is sufficient to show that the product structure
 	\[
 	\khr^{-4}_{2}(T_{3,\infty})\otimes \khr^{-4N}_{2N}(T_{3,\infty}) \longrightarrow \khr^{-4(N+1)}_{2(N+1)}(T_{3,\infty})
 	\]
 	maps $z_1 \otimes z_N$ to $z_{N+1}$. Indeed, this provides an identification $z_1 \cdot z_N = z_{N+1}$
 	and thus for all $N\geq 1$:
 	\[
 	z_1^N=z_N,
 	\]
 	i.e. $z_1$ is a polynomial generator. This is achieved by studying finite fusion maps and transport the result via the projections. Each $z_N$ appears first as an element of $\khr^{\ast}_{\ast}(T_{3,3N},2,2)$, so there must exist -abusing notations- $z_N \in \khr^{\ast}_{\ast}(T_{3,3N})$ that survives through the projection. Therefore we will consider the fusion maps
 	\[
 	\widetilde{\Sigma}^3_{3,3N}: \khr^{\ast}_{\ast}(T_{3,3}) \otimes \khr^{\ast}_{\ast}(T_{3,3N}) \longrightarrow \khr^{\ast}_{\ast}(T_{3,3(N+1)})  
 	\]
 	For each $N\geq 2$, consider the total long exact sequence for $T_{3,3N}$ with respect to the leftmost topmost crossing. The associated exact triple is given by $(D^N_0,D^N,D^N_1)= (U \coprod U, T_{3,3N}, D^N_1)$. Here we omit a description of the diagrams $D^N_1$, since they are irrelevant to the proof. Fusion maps induce maps of short exact sequences, so for $D_0^N$ we have:
 	\[
 	\Sigma_N: \khr^{\ast}_{\ast}(T_{3,3}) \otimes \khr^{\ast}_{\ast}(U \coprod U)[-4N,-4N+2] \longrightarrow \khr^{\ast}_{\ast}(U \coprod U)[-4(N+1),-4(N+1)+2].
 	\]
 	This map is surjective by lemma \ref{Alg3}. Additionally we have
 	\[
 	| z_1 \otimes z_N| = (-4,-2) + (-4N,-4N+2) = (-4(N+1),-4N).
 	\]
 	Since the fusion map is a succession of two $1$-handles it has a bi-degree $(0,-2)$. This means that $|\Sigma_N(z_1 \otimes z_N)| = (-4(N+1),-4(N+1)+2)$. The restriction of $\Sigma_N$ to these bi-degree is a surjective mapping between one dimensional vector spaces therefore an isomorphism. Thus our original map (for the $D^N$'s), namely
 	\[
 	\Sigma_N:\khr_{-2}^{-4}(T_{3,3}) \otimes \khr^{-4N}_{-4N+2}(T_{3,3N}) \longrightarrow \khr^{4(N+1)}_{-4(N+1)+2}(T_{3,3(N+1)})
 	\]
 	is also an isomorphism at these degrees. All three vector spaces are preserved by the projection to the limit, hence we have an isomorphism (after re-grading)
 	\[
 	\Sigma_{\infty}^{\ast}:\khr_{2}^{-4}(T_{3,\infty}) \otimes \khr^{-4N}_{2N}(T_{3,\infty}) \longrightarrow \khr^{-4(N+1)}_{2(N+1)}(T_{3,\infty})
 	\]
 	which gives the desired identification $z_1 \cdot z_N = z_{N+1}$ and concludes the proof.
 \end{proof}

\section{The $4$-stranded torus links}
 We move on to the case of $4$-stranded torus links. We shall compute the algebra $\khpqd{4}{\infty}$ and use it to obtain lower bounds for the width of $\khpqd{4}{q}$, for any $q >0$.

\subsection{The algebra structure}

The $2$ and $3$ stranded cases were treated by relying upon explicit descriptions of $\khpqd{2}{q}$ and $\khpqd{3}{q}$ for all $q \geq 0$. However the reduced Khovanov homology of $4$-stranded torus links is still unknown in general. It follows, that in order to understand $\khpqd{4}{\infty}$, we must become as independent as possible from the explicit description of  $\khpqd{4}{q}$.

Let us begin with the family of $T_{4,4N+1}$ torus knots. The exact triple $(D^N_1,D^N,D^N_0)$ below yields a total sequence that abuts to $\khr^{\ast}_{\ast}(T_{4,4N+1})$. 
 \begin{center}
 	\includegraphics[scale=0.75]{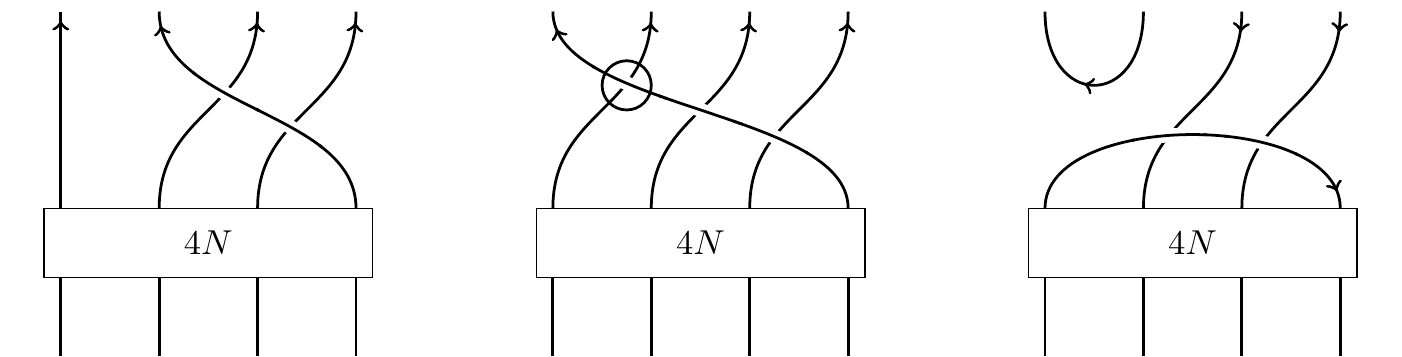}
 	\captionof{figure}{The exact triple for $T_{4,4N+1}$}\label{T44triple}
 \end{center}
 Clearly $D^N_0=T_{2,2N+1}$.
 The shifts for all $N \geq 1$ can be computed using the orientations provided, and:
 \[
 w^N_{-}= -6N-1.
 \]
We start with the case $N=1$. Using the KnotTheory Mathematica package, we obtained a description of both $\khr^{\ast}_{\ast}(T_{4,5})$ and $\khr^{\ast}_{\ast}(D^1_1)$. Our present goal is to understand the differential. The associated grid is given by:
 \begin{center}
 	\includegraphics[scale=0.75]{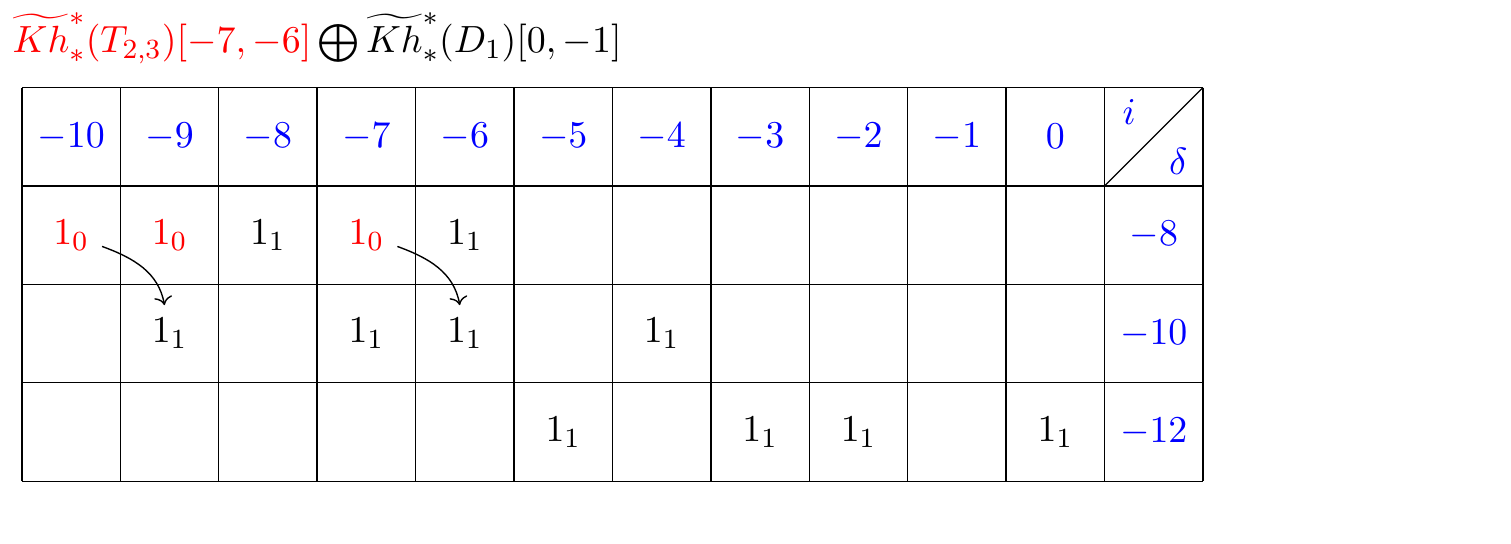}
 \end{center}
 And it converges to:
 \begin{center}
 	\includegraphics[scale=0.75]{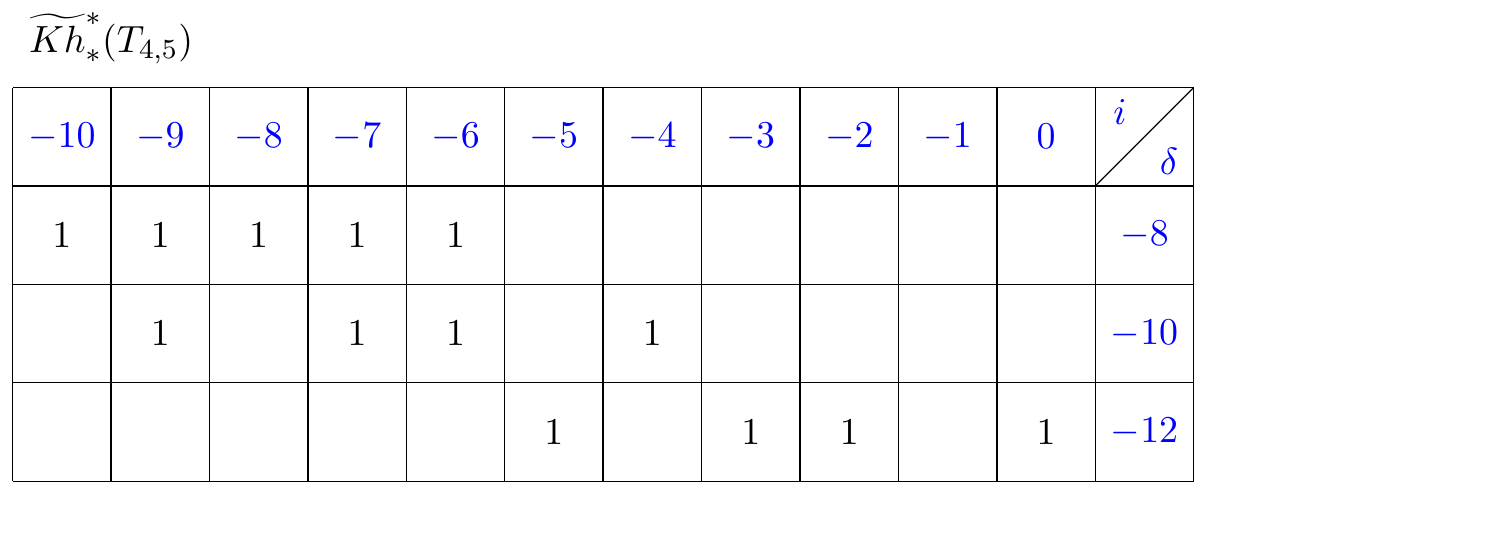}
 \end{center}
 The differential is thus the zero map.
 In particular, the element $w \in \khr^{0}_{-2}(D_0=T_{2,3})$, at the $(-7,-8)$ slot of the grid, survives.
 
 We now turn to the module $\khr^{\ast}_{\ast}(T_{4,5},2,3)$. The total sequence has underlying vector space
 $\khr^{\ast}_{\ast}(T_{3,\infty})[-7,-8] \bigoplus \khr^{\ast}_{\ast}(D_1,2,3)$.
 The canonical projection maps $w$ to the generator of the shifted rank one free module. Since $d(w)=0$, the whole module survives: 
 \[
 d(\khr^{\ast}_{\ast}(T_{3,\infty}))=0.
 \]
 Thus an isomorphism 
 \[
 \khr^{\ast}_{\ast}(T_{4,5},2,3) \cong \khr^{\ast}_{\ast}(T_{3,\infty})[-7,-8] \bigoplus\khr^{\ast}_{\ast}(T_{4,4},2,3)[0,-3].
 \]
 A complete description of $\khr^{\ast}_{\ast}(T_{4,4})$ is given in Section $5$, but for now we need to know that  $\khr^{-6}_{-5}(T_{4,4})\cong \bbZ_{2}$. Consider Figure \ref{T44triple}, with $4N-1$ blocs instead of $4N$ blocs. With our usual choice of crossings, we obtain the exact triple $(D_1^N, T_{4,4N}, D_0^N = U \coprod T_{2,2N})$. All diagrams are $(3,2)$-admissible, with associated shift $w_{-}=-6N$.
We now have everything we need to state, then prove, the main result of this section.

\begin{theorem}\label{Alg4}
	There is a bi-graded algebra isomorphism
	\[
	\widetilde{Kh}^{\ast}_{\ast}(T_{4,\infty}) \cong \dfrac{\mathbb{Z}_2[x,y,z,v,w]}{(x^2=y^2=0, w^2 = \alpha vz^2 + \beta xv^2 )},
	\]
	where $\alpha,\beta \in \bbZ_2$. The degrees are given by
	\[
	|x|=(-2,0), \hspace{0.1cm}|y|=(-3,0), \hspace{0.1cm} |z|=(-4,2), \hspace{0.1cm} |v|=(-6,4), \hspace{0.1cm} |w|=(-7,4).
	\]
\end{theorem}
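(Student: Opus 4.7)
The plan is to follow the inductive template used for Theorems \ref{Th2inf} and \ref{Algebra3}, now working with the $\widetilde{Kh}^{\ast}_{\ast}(T_{3,\infty})$-module structure on the tangle limits $\widetilde{Kh}^{\ast}_{\ast}(T_{4,q},3,2)$ introduced in Subsection 2.3. By Lemma \ref{intsteps}, the limits for $q=4N+2$ and $q=4N+3$ are determined by those for $q=4N$ and $q=4N+1$, so I only need to analyze the two families $T_{4,4N}$ and $T_{4,4N+1}$ via the total exact sequences built from the crossings indicated (in particular Figure \ref{T44triple}), all of whose diagrams are $(3,2)$-admissible by Lemma \ref{absorb}.

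First I would establish the base of the induction: the computer-assisted computations for $\widetilde{Kh}^{\ast}_{\ast}(T_{4,4})$ and $\widetilde{Kh}^{\ast}_{\ast}(T_{4,5})$ (and the partial analysis already displayed just before the theorem, which gave $\widetilde{Kh}^{\ast}_{\ast}(T_{4,5},3,2) \cong \widetilde{Kh}^{\ast}_{\ast}(T_{3,\infty})[-7,-8] \oplus \widetilde{Kh}^{\ast}_{\ast}(T_{4,4},3,2)[0,-3]$) feed the induction. The inductive step mimics the proof of Theorems \ref{332q}, \ref{342q} and the module-level statement in Subsection 3.2: the finite total sequence is known explicitly for small $q$, the canonical projections to the limit are injective on the surviving pieces, so the limit differentials are determined. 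Iterating produces a description
\[
\widetilde{Kh}^{\ast}_{\ast}(T_{4,\infty}) \cong \bigoplus_{N\geq 0} \widetilde{Kh}^{\ast}_{\ast}(T_{3,\infty})[-6N,4N] \oplus (\mbox{torsion pieces}),
\]
as a bi-graded $\widetilde{Kh}^{\ast}_{\ast}(T_{3,\infty})$-module, where the free generators in each successive block produce, after the appropriate shift, elements of bi-degrees $(-6,4)$ and $(-7,4)$ which I name $v$ and $w$. The relations $x^2=y^2=0$ lift from $\widetilde{Kh}^{\ast}_{\ast}(T_{3,\infty})$ via the ring homomorphism induced by inclusion. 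That $v^N$ is nonzero and generates the $N$th free block is then a fusion-map argument identical to the one used for the generator $z$ in the proof of Theorem \ref{Algebra3}: the restriction of $\widetilde{\Sigma}^4_{4,4N}$ to a one-dimensional source and target in the relevant bi-degree is a surjection between one-dimensional spaces, hence an isomorphism, giving $v \cdot v^N = v^{N+1}$.

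The main obstacle, and the reason for the stated indeterminacy, is the square $w^2$. By bi-degree, $|w^2|=(-14,8)$, and in that bi-degree the subalgebra already generated by $x,y,z,v$ has basis exactly $\{vz^2,\, xv^2\}$; all other monomials are killed by $x^2=y^2=0$ or have the wrong bi-degree. Hence, $w^2 = \alpha vz^2 + \beta xv^2$ with $\alpha,\beta\in\bbZ_2$ is forced, and no further relation among the generators is possible by a Hilbert-series comparison against the module description above. Pinning down $\alpha$ and $\beta$ would require tracking a specific composite of two fusion movies on sufficiently complicated diagrams and extracting a concrete cycle representative for $w$, which the module-theoretic arguments above do not provide; I would therefore leave these two coefficients as the stated indeterminacy. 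Finally, surjectivity of the obvious algebra map from $\bbZ_2[x,y,z,v,w]/(x^2=y^2=0,\, w^2=\alpha vz^2+\beta xv^2)$ to $\widetilde{Kh}^{\ast}_{\ast}(T_{4,\infty})$ follows from the fact that the indicated generators together with their products exhaust the module description, and injectivity then follows by matching graded dimensions in each bi-degree.
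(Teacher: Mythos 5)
There is a genuine gap, and also one point where the proposal is simply wrong about the structure that appears, which matters because the final step you invoke (Hilbert-series matching) would fail with the structure you wrote.

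First, the module decomposition you write down is incorrect. You claim
\[
\widetilde{Kh}^{\ast}_{\ast}(T_{4,\infty}) \cong \bigoplus_{N\geq 0} \widetilde{Kh}^{\ast}_{\ast}(T_{3,\infty})[-6N,4N] \oplus (\text{torsion pieces}),
\]
but the correct answer, and what the paper's argument actually produces, is an entirely \emph{free} $\widetilde{Kh}^{\ast}_{\ast}(T_{3,\infty})$-module with \emph{two} copies per level $N\geq 1$:
\[
\widetilde{Kh}^{\ast}_{\ast}(T_{4,\infty})\;\cong\;\widetilde{Kh}^{\ast}_{\ast}(T_{3,\infty})\;\oplus\;\bigoplus_{N\geq 1}\Bigl(\widetilde{Kh}^{\ast}_{\ast}(T_{3,\infty})[-6N-1,4N]\oplus\widetilde{Kh}^{\ast}_{\ast}(T_{3,\infty})[-6N,4N]\Bigr),
\]
the two summands being supported by $v^N$ and by $w v^{N-1}$ respectively. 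There are no torsion summands: each full twist contributes two new free rank-one blocks. This is not cosmetic; the graded-dimension comparison you use at the end to conclude injectivity of the presentation map
only goes through because the basis $\{v^N, wv^{N-1}\}_{N\geq 0}$ of the ring $\bbZ_2[v,w]/(w^2=\alpha vz^2+\beta xv^2)$ over $\bbZ_2[x,y,z]/(x^2=y^2=0)$ matches the set of free module generators exactly. With the decomposition you wrote, the dimensions would not match and the argument would collapse.

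Second, and this is where the real gap lies, you say that $v^{N+1}=v\cdot v^N$ is obtained by ``a fusion-map argument identical to the one used for the generator $z$'' in Theorem~\ref{Algebra3}. That argument depended on a dedicated surjectivity lemma for the relevant fusion map. In the $4$-stranded setting the surjectivity established in the paper (Lemma~\ref{4surj}) is for $\Sigma_{4N+1}$, i.e.\ the map showing up in the exact triple for the $T_{4,4N+1}$ family. No analogous surjectivity is proved (or claimed) for the $T_{4,4N}$ family, and it is not obvious how to run the ``complete the triple'' technique for that map. The paper does something different and cheaper: it first exploits the $4N+1$ family (via Lemma~\ref{4surj}) to get a chain of nonzero products $w_{N+1}=w_1 v^N$, and only then deduces $v^N\neq 0$ by associativity, from which the triviality of the boundary map in the $4N$ family follows by a module-theoretic argument rather than a new surjectivity lemma. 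Your route would need to supply that missing surjectivity statement to work, or to be rerouted through the $w_1 v^N$ computation as the paper does.

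The rest of the structure of your argument (reduction to the $4N$ and $4N+1$ families via Lemma~\ref{intsteps}, the base cases from $T_{4,4}$ and $T_{4,5}$, the bi-degree count for $w^2$ in degree $(-14,8)$ forcing $w^2\in\mathrm{span}\{vz^2,xv^2\}$, and the resulting indeterminacy $\alpha,\beta$) tracks the paper faithfully.
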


\begin{proof}
Let us start with a warning: the proof contains a lot of compounded shifts. We strongly advise the reader to go through Lemma \ref{intsteps} and Example \ref{shiftedinf} again, as they contains the properties (derived from Lemma \ref{absorb}) we will refer to. We follow the strategy suggested by the discussion that followed Lemma \ref{absorb}, i.e.  we need only understand two families of total exact sequences. We begin with the family for the $4N+1$ case, with our usual choice of crossing (see Figure \ref{T44triple}) and $(D_1^N,D^N=T_{4,4N+1},D_0^N=T_{2,2N+1})$ be the associated triple. 
\[
\khr^{\ast}_{\ast}(D_0^N)[-6N,-6N] \cong \khpqd{3}{\infty}[-6N-1,-6N -2N]  \overset{\partial}{\longrightarrow} \khr^{\ast}_{\ast}(T_{4,4(N-1)},3,2)[0,-3].
\]
Here the additional $-2N$ shift is due to the fact that $\khr^{0}_{\delta}(D_0^N) \neq 0$ if and only $\delta = -2N$ (more details in Example \ref{shiftedinf}). Our first step is to obtain isomorphisms
\[
\khr^{\ast}_{\ast}(T_{4,4N+1},3,2) \cong \khr^{\ast}_{\ast}(T_{3,\infty})[-6N-1,-8N] \bigoplus\khr^{\ast}_{\ast}(T_{4,4(N-1)},3,2)[0,-3].
\]
The main argument relies on fusion maps of the form
\[
\widetilde{\Sigma}^4_{4,4N+1}:  \khpqd{4}{4N+1}  \otimes \khpqd{4}{4} \longrightarrow \khpqd{4}{4(N+1)+1}.
\]
We know, from Lemma \ref{4surj}, that for any $N\geq 1$ they induce a surjective map
\[
\Sigma_{4N+1}:\khr^{\ast}_{\ast}(D_0^N)[-6N-1,-6N] \otimes   \khpqd{4}{4} \longrightarrow \khr^{\ast}_{\ast}(D_0^{N+1})[-6(N+1)-1,-6(N+1)].
\]
So it restricts to an isomorphism (between one dimensional vector spaces) for a particular choice of degrees, where the shifts have been taken into account:
\[
\Sigma_{4N+1}:\khr^{0}_{-2N}(D_0^N)\otimes \khr^{-6}_{-5}(T_{4,4}) \longrightarrow \khr^{0}_{-2(N+1)}(D_0^{N+1}).
\]
For the same choice of degrees, the projections are non-zero and yield a commutative diagram:
\[
\begin{array}{ccc}
\bbZ_2 \cong \khr^{0}_{-2N}(D_0^N)\otimes  \khr^{-6}_{-5}(T_{4,4}) &\overset{\Sigma_{4N+1}}{\longrightarrow}& \khr^{0}_{-2(N+1)}(D_0^{N+1}) \cong \bbZ_2 \\
\Big\downarrow \varphi \otimes 1 & & \Big\downarrow \varphi \\
\bbZ_2 \cong	\khr^{0}_{0}(T_{3,\infty})[0,-2N]\bigotimes \khr^{-6}_{-5}(T_{4,4}) & \overset{\Sigma^{\infty}_{4N+1}}{\longrightarrow} &	\khr^{0}_{0}(T_{3,\infty})[0,-2(N+1)] \cong \bbZ_2. \\
\end{array}
\]
Hence $\Sigma^{\infty}_{4N+1}$ is an isomorphism. Since $\khr^{\ast}_{\ast}(T_{3,\infty})$ is a rank one free module generated by the element in bi-degree $(0,0)$, this isomorphism extends to an isomorphism of modules.  We can now consider the limit total exact sequence, via the commutative diagram below, where vertical maps are fusions maps (shifts omitted)
\[
 	\begin{array}{ccc}
	\khr^{\ast}_{\ast}(T_{3,\infty}) \bigotimes \khr^{-6}_{-5}(T_{4,4}) & \overset{d^N \otimes 1}{\longrightarrow} & \khr(D_1^N,3,2) \bigotimes \khr^{-6}_{-5}(T_{4,4})\\
 	 \Sigma^{\infty}_{4N+1}  \big \downarrow& & \big \downarrow \\
 	\khr^{\ast}_{\ast}(T_{3,\infty}) & \overset{d^{N+1}}{\longrightarrow} & \khr(D_1^{N+1},3,2).
 	\end{array}
\]
We know from our previous study of $\khr^{\ast}_{\ast}(T_{4,5},2,2)$ that $d^1=0$. As shown above, the left-hand side vertical map is an isomorphism for all $N\geq 1 $. By induction starting at $N=1$, all differentials $d^N$ must vanish. Hence the isomorphism:
\[
\khr^{\ast}_{\ast}(T_{4,4N+1},3,2) \cong \khr^{\ast}_{\ast}(T_{3,\infty})[-6N-1,-8N] \bigoplus\khr^{\ast}_{\ast}(T_{4,4(N-1)},3,2)[0,-3].
\]
Let $w_N$ be the generator of the copy $\khpqd{3}{\infty}[-6N+1,-6N]$ and $v$ that of $\khr^{-6}_{-5}(T_{4,4})$. The earlier isomorphism of modules translates into an identification, for $N\geq 2$:
\[
w_N = w_{N-1}\cdot v.
\]
In particular, we have shown that $w_{N+1}=w_1v^N = (\cdots(((w_1v)v)v\cdots) \neq 0 \in \khpqd{4}{\infty}$. Thus, by associativity, $v^N \neq 0$ in $\khpqd{4}{\infty}$. The latter information can be used to understand the remaining family of total exact sequences, i.e.
\[
\khpqd{3}{\infty}[-6N,-8N]  \longrightarrow \khr^{\ast}_{\ast}(T_{4,4(N-1)+3},3,2)[0,-3] \cong \khr^{\ast}_{\ast}(T_{4,4(N-1)+1},3,2)[0,-9].
\]
The isomorphism is a particular case of Lemma \ref{intsteps}. Let $v_N$ be the generator of the $N^{th}$ copy $\khpqd{3}{\infty}[-6N,-8N]$. Since for all $N$, $w_1v^{N-1}$ supports a rank one free $\khpqd{3}{\infty}$-module, it follows that $v^N$ itself supports such a module. This module must be the one above, as there is no other choice. Thus $v_{N} = v^N$ and so $d^N(v_N)=0$ for any $N \geq 1$. This translates into an isomorphism
\[
\khr^{\ast}_{\ast}(T_{4,4N},3,2) \cong \khr^{\ast}_{\ast}(T_{3,\infty})[-6N,-8N] \bigoplus \khr^{\ast}_{\ast}(T_{4,4(N-1)+1},3,2)[0,-9].
\]
We now know the behaviour of our $\khr^{\ast}_{\ast}(T_{3,\infty})$ when we add a full twist:
\[
\begin{array}{ccl}
\khr^{\ast}_{\ast}(T_{4,4N+1},3,2)& \cong& \khr^{\ast}_{\ast}(T_{3,\infty})[-6N-1,-8N] \bigoplus \khr^{\ast}_{\ast}(T_{3,\infty})[-6N,-8N] \\
&&\bigoplus \khr^{\ast}_{\ast}(T_{4,4(N-1)+1},3,2)[0,-12]
\end{array}
\]
Let us summarize were we stand. Each time we add a full twist over $4$ strands, we add two copies of $\khr^{\ast}_{\ast}(T_{3,\infty})$, shifted suitably and supported by $v^N$ and $w_1v^N$ respectively. We will give an explicit description of $\khpqd{4}{\infty}$ as a $\khr^{\ast}_{\ast}(T_{3,\infty})$-module in a moment. Before that we need to normalize the directed system. The homology of $T_{4,4N+1}$ in homological degree $0$ is supported in $\delta$-grading $-12N$, so we apply a $[0,12N]$ shift. Hence for each $N\geq 1$, we have an isomorphism (of $\khr^{\ast}_{\ast}(T_{3,\infty})$-modules), after shift:
\[
\begin{array}{ccl}
\khr^{\ast}_{\ast}(T_{4,4N+1},3,2)& \cong& \khr^{\ast}_{\ast}(T_{3,\infty})[-6N-1,4N] \bigoplus \khr^{\ast}_{\ast}(T_{3,\infty})[-6N,4N] \\
&&\bigoplus \khr^{\ast}_{\ast}(T_{4,4(N-1)+1},3,2)[0,12(N-1)].
\end{array}
\]
Thus the limit gives us the final $\khr^{\ast}_{\ast}(T_{3,\infty})$-module:
\[
\khpqd{4}{\infty} \cong \khr^{\ast}_{\ast}(T_{3,\infty}) \oplus  \bigoplus\limits_{N=1}^{\infty} 
\left( \khr^{\ast}_{\ast}(T_{3,\infty})[-6N-1,4N] \oplus \khr^{\ast}_{\ast}(T_{3,\infty})[-6N,4N]
\right).
\]
One of these families is generated by elements in even homological degrees $-6N$, the other in odd homological degrees $-6N-1$. Moreover, the fusion map tell us we can jump from one copy to the next by multiplication by $v \in \khr^{-6}_{4}(T_{4,\infty})$. So we have a polynomial generator of our algebra, $v$. At this point it is quite clear that there is no room for $w_1$ to also be a polynomial generator. Indeed if that were the case, there would be more copies of $\khr^{\ast}_{\ast}(T_{3,\infty})$ living inside $\khr^{\ast}_{\ast}(T_{4,\infty})$. The element $w_1^2$ has a degree
\[
|w_1^2| = 2|w_1| = 2(-7,4) = (-14,8).
\]
One checks easily that there are only two other elements of the algebra $\khpqd{4}{\infty}$ with such a degree, namely $vz^2$ and $v^2x$. These three must then be linearly dependent. The obvious identification yields an isomorphism:
\[
\widetilde{Kh}^{\ast}_{\ast}(T_{4,\infty}) \cong \dfrac{\mathbb{Z}_2[x,y,z,v,w]}{(x^2=y^2=0, w^2 = \alpha vz^2 + \beta xv^2 )},
\]
where $\alpha,\beta \in \bbZ_2$. This concludes the proof.
\end{proof}
 
\subsection{The width of $\khpqd{4}{q}$} As an application of the description of the algebra structure, we give a lower bound to the width the family of $4$-stranded torus links.

\begin{corollary}\label{widthT4}
	The following inequalities hold
	\begin{enumerate}
		\item {$2n + 1 \leq w(\khpqd{4}{4n})$,\hspace{0.1cm}$w(\khpqd{4}{4n+1})$,\hspace{0.1cm}$w(\khpqd{4}{4n+2})$.
			}
		\item {$2n + 2 \leq w(\khpqd{4}{4n+3})$.
			}
	\end{enumerate}
\end{corollary}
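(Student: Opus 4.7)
The plan is to identify, for each $q$, a small collection of non-zero classes in $\widetilde{Kh}^{\ast}_{\ast}(T_{4,q})$ at distinct $\delta$-values, and then read off the width from the range of those values. These classes are produced from the stable algebra $\widetilde{Kh}^{\ast}_{\ast}(T_{4,\infty})$ of Theorem \ref{Alg4} by means of the canonical projection $\varphi_q: \widetilde{Kh}^{\ast}_{\ast}(T_{4,q})[0, 3(q-1)] \longrightarrow \widetilde{Kh}^{\ast}_{\ast}(T_{4,\infty})$ coming from the directed system: an algebra element at stable bi-degree $(i, \delta)$ lying in the image of $\varphi_q$ is the image of a non-zero class in $\widetilde{Kh}^{i}_{\delta - 3(q-1)}(T_{4,q})$, and fusion builds such pre-images explicitly.

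To handle part (1), I would use two classes. The first is the unit $1 \in \widetilde{Kh}^{0}_{0}(T_{4,\infty})$, which always lifts to a non-zero class in $\widetilde{Kh}^{0}_{-3(q-1)}(T_{4,q})$. The second is $v^n$: by Theorem \ref{Alg4}, $v$ is represented by the generator of $\widetilde{Kh}^{-6}_{-5}(T_{4,4})$, so iterated fusion gives a class in $\widetilde{Kh}^{-6n}_{-8n+3}(T_{4,4n})$ whose image in the stable algebra is the non-zero element $v^n$; transporting it through the further inclusions keeps it non-zero in $\widetilde{Kh}^{-6n}_{4n-3(q-1)}(T_{4,q})$ for every $q \geq 4n$, since the composed projection to the stable limit recovers $v^n$. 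For $q = 4n+r$ with $r \in \{0, 1, 2\}$ these two classes differ in $\delta$-value by exactly $4n$, whence $w \geq 2n + 1$.

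For part (2), a third class at a new $\delta$-value is required to push the bound up to $2n+2$. The natural candidate is $zv^n$, obtained by fusing $v^n \in \widetilde{Kh}(T_{4,4n})$ with a representative of $z$ living in $\widetilde{Kh}(T_{4,3})$. Since the algebra generator $z$ has stable bi-degree $(-4, 2)$, it should correspond to a class in $\widetilde{Kh}^{-4}_{-4}(T_{4,3}) = \widetilde{Kh}^{-4}_{-4}(T_{3,4})$; granting this, a bi-degree computation for the fusion (which has unshifted $\delta$-degree $-3$) places $zv^n$ at $\widetilde{Kh}^{-6n-4}_{-8n-4}(T_{4,4n+3})$. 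Combined with the lift of $1$ at $(0, -12n-6)$, this produces a $\delta$-range of length $4n+2$, and hence $w \geq 2n+2$.

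The main obstacle will be certifying the generator $z$ at the level $q=3$: Sto\v{s}i\'c's stability theorem as stated requires $p < q$ and so does not directly identify $\widetilde{Kh}^{-4}_{\ast}(T_{4,3})$ with its stable counterpart. I would address this either by invoking the known reduced Khovanov homology of the $(3,4)$-torus knot (accessible from the work of Turner and Sto\v{s}i\'c on $3$-stranded torus links) or by extracting the desired class from the $(3,2)$-module decomposition of $\widetilde{Kh}^{\ast,\ast}(T_{4,4N+1},3,2)$ produced in the proof of Theorem \ref{Alg4}, which embeds a copy of $\widetilde{Kh}(T_{3,\infty})$ and lets one track where its generators first originate. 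Once this ingredient is in place, the remaining bookkeeping — reading off bi-degrees after the shifts $[0, -3(q-1)]$, confirming non-vanishing of fused classes through their non-zero stable images, and taking $\delta$-differences — is routine.
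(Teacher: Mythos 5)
Your argument is the same as the paper's: both parts are obtained by exhibiting a non-zero class at $\delta=0$ (the unit) and a non-zero power of $v$ (respectively $v^n z$) at maximal $\delta$, then reading off the width from the $\delta$-gap. Part (1) uses $v^n$ at stable bi-degree $(-6n,4n)$, part (2) uses $v^n z$ at stable bi-degree $(-6n-4,4n+2)$ (the paper's proof misprints the homological degree as $-6n+2$, but works with the same $\delta$-value $4n+2$). Your bi-degree bookkeeping through the shifts $[0,3(q-1)]$ is correct.

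The one place you stop short is the same place the paper is silent: you need the generator $z$ to lie in the image of the canonical projection $\varphi_3:\widetilde{Kh}^{\ast}_{\ast}(T_{4,3})[0,6]\to\widetilde{Kh}^{\ast}_{\ast}(T_{4,\infty})$, equivalently that there is a non-zero class in $\widetilde{Kh}^{-4}_{-4}(T_{4,3})=\widetilde{Kh}^{-4}_{-4}(T_{3,4})$ which is \emph{not} killed by the subsequent inclusions. You correctly diagnose that Sto\v{s}i\'c's stability range does not hand this to you for free and you name two reasonable routes (the explicit homology of $T_{3,4}$, or tracking the $\widetilde{Kh}^{\ast}_{\ast}(T_{3,\infty})$-summands in the proof of Theorem \ref{Alg4}); either works, but your proposal outlines rather than carries out this check. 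Once it is filled in, the proof coincides with the paper's.
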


Note that $2n + 1 \leq w(\khpqd{4}{4n})$ was already known from Sto\v{s}i\'c's work \cite{stovsic2007homological}.
\medskip

\begin{proof}
	For $(1)$, the result follows from the fact that $v^n$ can be understood as an element of $\khpqd{4}{4n}$, $\khpqd{4}{4n+1}$, $\khpqd{4}{4n+2}$ via the canonical projections. It appears first as an element of $\khr^{\ast}_{\ast}(T_{3,3N},3,2)$ and survives all the way to $\khpqd{4}{\infty})$. As an element of  $\khpqd{4}{\infty})$, it has bi-degree $(-6n,4n)$. Therefore, since the shifts do not alter the width, it follows that for $q=4n,4n+1,4n+2$:
	\[
	w(\khpqd{4}{q}) \geq \frac{1}{2}(4n - 0) + 1 = 2n + 1.
	\]
	For $(2)$, we consider not $v^n$, but $v^nz$. We have the commutative diagram, where the vertical maps are fusion maps and the horizontal maps are projections
	\[
	\begin{array}{ccc}
	\khpqd{4}{4n} \bigotimes \khpqd{4}{3} & \longrightarrow & \khpqd{4}{\infty} \bigotimes \khpqd{4}{\infty} \\
	\big \downarrow  & & \big \downarrow \\
	\khpqd{4}{4n+3} & \longrightarrow & \khpqd{4}{\infty}.
	\end{array}
	\]
	Since $v^n \in \khpqd{4}{4n}$, $z \in \khpqd{4}{3}$,  $v^nz \neq 0 \in \khpqd{4}{\infty}$, we must have that $v^nz \neq 0$ in $\khpqd{4}{4n+3}$ also. As an element of $\khpqd{4}{\infty}$, $v^nz$ has bi-degree $(-6n + 2, 4n +2)$. It follows that
	\[
	w(\widetilde{Kh}(T_{4,4n+3})) \geq \frac{1}{2}(4n + 2 - 0)  + 1 = 2n+2.
	\]
\end{proof}

\subsection{Comparison with the work of Gorsky-Oblomkov-Rasmussen}
We conclude this section by comparing our result with the Gorsky-Oblomkov-Rasmussen conjecture \cite{gorsky2013stable} (whose statement we adapt to our conventions). Consider the polynomial ring in variables $x_1, \ldots x_{p-1}$ and an equal number of odd variables $\chi_1,\ldots, \chi_{p-1}$ bi-graded as
\[
|x_k|= (-2k, 2k-2), \hspace{1cm} |\chi_k|=(-2k-1,2k-2).
\]
The differential $d$ is given by the formula
\[
d(x_k)=0, \hspace{1cm} d(\chi_k)= \sum_{i=1}^{k-1}x_ix_{k-i}.
\]

\begin{conj}[Gorsky-Oblomkov-Rasmussen,\cite{gorsky2013stable}]
The stable homology $\khpqd{p}{\infty}$ is isomorphic to the homology of $\bbZ_2[x_1,\ldots,x_{p-1}]\otimes \Lambda[\chi_1,\ldots,\chi_{p-1}]$ with respect to $d$.
\end{conj}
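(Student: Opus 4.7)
The plan is not to attempt the GOR conjecture in full generality, but rather to verify it explicitly for $p=2,3,4$ and to compare the predicted algebras with Theorems \ref{Th2inf}, \ref{Algebra3}, \ref{Alg4}. For each of these $p$ the Koszul-type complex in the conjecture is small, and its homology can be computed by hand because at most one of the odd generators fails to be closed: $d(\chi_k)$ is either an empty sum (for $k=1$), equal to $x_1^2$ (for $k=2$), or equal to $2x_1 x_2 = 0$ in $\bbZ_2$ (for $k=3$).

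For $p=2$, only $x_1$ and $\chi_1$ appear and $d$ vanishes identically, so the predicted algebra is $\bbZ_2[x_1] \otimes \Lambda[\chi_1] \cong \bbZ_2[x,y]/(y^2)$. This imposes $y^2 = 0$, whereas Theorem \ref{Th2inf} yields $y^2 = x^3 \neq 0$; so a naive reading of the conjecture already fails at $p=2$. For $p=3$, a short computation (quotient by the ideal $(x_1^2)$ cut out by the image of $d$) gives $\bbZ_2[x_1, x_2]/(x_1^2) \otimes \Lambda[\chi_1]$, matching Theorem \ref{Algebra3} under the identification $(x_1, \chi_1, x_2) \leftrightarrow (x, y, z)$. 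For $p=4$, the same reasoning produces $\bbZ_2[x_1, x_2, x_3]/(x_1^2) \otimes \Lambda[\chi_1, \chi_3]$, which matches Theorem \ref{Alg4} precisely in the case $\alpha = \beta = 0$; thus the undetermined pair $(\alpha, \beta) \in \{0,1\}^2$ from Section 4 becomes a concrete test of the conjecture.

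The main obstacle is conceptual rather than computational: explaining the genuine $p=2$ discrepancy. Two directions I would explore are, first, whether the GOR conjecture is fundamentally a statement at the triply graded level, so that the specialization to bi-graded reduced Khovanov homology hides secondary differentials that act trivially for $p \geq 3$ but nontrivially for $p=2$; and second, whether the mod-$2$ reduction introduces Bockstein-type relations (which would account for the anomalous $y^2 = x^3$) that are invisible over $\bbQ$. The natural follow-up would then be to propose a modified statement of the conjecture adapted to $\bbZ_2$ coefficients, and to attempt to decide the pair $(\alpha,\beta)$ either by a direct computation of $\khr^{\ast}_{\ast}(T_{4,q})$ for one further value of $q$, or by identifying a movie-theoretic obstruction that forces $w^2$ to land in the predicted direction.
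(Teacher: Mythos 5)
The statement you were handed is a conjecture, and the paper does not prove it; what the paper actually does is compare its computed algebras $\khpqd{p}{\infty}$ ($p=2,3,4$) to the conjectured $H_c(T_{p,\infty})$ and then resolve the mismatch. Your computations of $H_c(T_{p,\infty})$ for $p=2,3,4$ are correct and agree with the paper's, and you correctly spot that the $p=2$ case fails on the nose because $y^2 = x^3 \neq 0$ rather than $y^2 = 0$. You also put your finger on the right intuition: this is the usual defect of a spectral sequence computing an associated graded object rather than the thing itself. But you leave the resolution as an open exploration, whereas the paper makes it precise with a very short argument you could have supplied. Define the two-step filtration $0 \subset \calF^0(\khpqd{p}{\infty}) \subset \khpqd{p}{\infty}$, where $\calF^0$ is the even-homological-degree part (a subalgebra). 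In the associated graded $\calG_p = \calF^0 \oplus \khpqd{p}{\infty}/\calF^0$, any odd-degree element has square zero \emph{automatically}, because its square must land in the degree-$2$ filtration piece, which is zero. This kills $y^2 = x^3$ for $p=2$ and kills $w^2 = \alpha v z^2 + \beta x v^2$ for $p=4$, giving $\calG_p \cong H_c(T_{p,\infty})$ in both cases (and for $p=3$ the algebra already matches without passing to $\calG_p$).

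This also corrects a misreading in your proposal: you present the indeterminate pair $(\alpha,\beta)$ as ``a concrete test of the conjecture,'' suggesting the conjecture holds only if $\alpha=\beta=0$. That is not how the paper reads the situation. Since the associated graded forces $\bar{w}^2 = 0$ regardless of $\alpha,\beta$, the comparison with $H_c(T_{4,\infty})$ succeeds for every value of $(\alpha,\beta)$, and the GOR conjecture is thus neither confirmed nor refuted by pinning the pair down. Determining $(\alpha,\beta)$ remains an interesting problem about the $\bbZ_2$ Khovanov algebra itself, but it is orthogonal to the conjecture. Your two speculative directions (secondary differentials from the triply graded theory, and Bockstein-type phenomena) are thematically plausible but unnecessary once the filtration argument is in place, and they would have been the harder route.
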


One checks easily that the expected stable homology $H_c(T_{p,\infty})$ can be described by
\[
\begin{array}{ccl}
H_c(T_{2,\infty}) &=& \bbZ_2[x_1,\chi_1]/(\chi_1^2=0), \\
H_c(T_{3,\infty}) &=& \bbZ_2[x_1,x_2,\chi_1]/(x_1^2=\chi_1^2=0), \\
H_c(T_{4,\infty}) &=& \bbZ_2[x_1,x_2,x_3,\chi_1,\chi_3]/(x_1^2=\chi_1^2=\chi_3^2=0). \\
\end{array}
\]

The reader should be aware that there exists a corresponding statement for the triply-graded homology that generalizes the HOMFLY-PT polynomial. Moreover, that version has been shown to be true by Hogoncamp \cite{hogancamp2015stable}, for integer coefficients. Let us also recall that there is a spectral sequence, due to Rasmussen \cite{rasmussen2016some}, that starts at the triply-graded homology and converges to the usual Khovanov homology. Hence it seemed reasonable to think that the conjecture also held for the usual variant. However, for $p=2$, our result is slightly different. This mismatch is actually a common defect of spectral sequences, these compute associated graded objects, rather than a homology itself. Consider the filtration on $\khpqd{p}{\infty}$ defined as
\[
0 \subset \calF^0(\khpqd{p}{\infty}) = \{ u \in \khpqd{p}{\infty} | u \mbox { has even homological degree } \} \subset \khpqd{p}{\infty}.
\]
Clearly $\calF^0(\khpqd{p}{\infty})$ is sub-algebra, so we have a filtered algebra and the associated graded algebra is given by
\[
\calG_p = \calF^0(\khpqd{p}{\infty}) \bigoplus \dfrac{\khpqd{p}{\infty}}{\calF^0(\khpqd{p}{\infty})}.
\]
The second summand contains the elements with odd homological degree and their squares must be zero. By definition of  $\calG_p$, there is an isomorphism of vector spaces $\calG_p \cong \khpqd{p}{\infty}$, and even an isomorphism of algebras for $p=3$. For the two other cases $p=2$ and $p=4$, it also follows immediately that there is an algebra isomorphism
\[
\calG_p \cong H_c(T_{p,\infty}).
\]
Hence it is $\calG_p$ and not $subset \khpqd{p}{\infty}$ that satisfies the conjecture.
 
\section{Odds and ends: surjective maps}
This section is dedicated to a method we use in order to study surjectivity of maps induced by $1$-handle moves. The key technique is a way to fit these maps into long exact sequences, which we call \emph{completing the triple}.

Consider an oriented $1$-handle move between two diagrams $F$ and $F'$.

\begin{center}
	\includegraphics{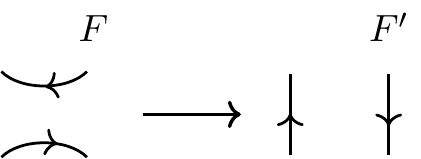}
\end{center}

If the two strands shown in $F$ belong to different components, then reverse the orientation of one of them to obtain a diagram $\bar{F}$.
\begin{center}
	\includegraphics{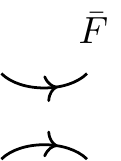}
\end{center}
Let $D$ be the diagram identical to $\bar{F}$ except in the region shown where it is:
\begin{center}
	\includegraphics{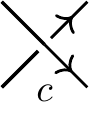}
\end{center}
$D$ has one more crossing, call it $c$, than $\bar{F}$. This crossing is positive and $D_0 = \bar{F}$. Moreover the two strands appearing in $D_1$ must be in the same component and we may choose the orientation to be that of $F'$.

If the two strands shown in $F$ belong to the same component, then the two strands shown in $F'$ must belong to different components. Thus one can reverse the orientation of any of these, yielding a diagram $\bar{F}'$.
\begin{center}
	\includegraphics{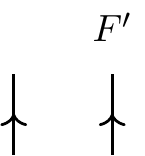}
\end{center}
Let $D$ be the diagram identical to $\bar{F}'$ except in the region shown, where it is
\begin{center}
	\includegraphics{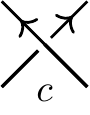}
\end{center}
$D$ has one more crossing than $\bar{F}'$. This crossing is negative and $D_1=\bar{F}'$. The orientation of $D_0$ can be chosen to be the orientation of $F$. In both case we obtain an exact triple $(D_1, D, D_0)$. We have summarized this procedure in Figure \ref{orientationchange}: on the left is the negative case, on the right the positive one.

\begin{center}
	\includegraphics{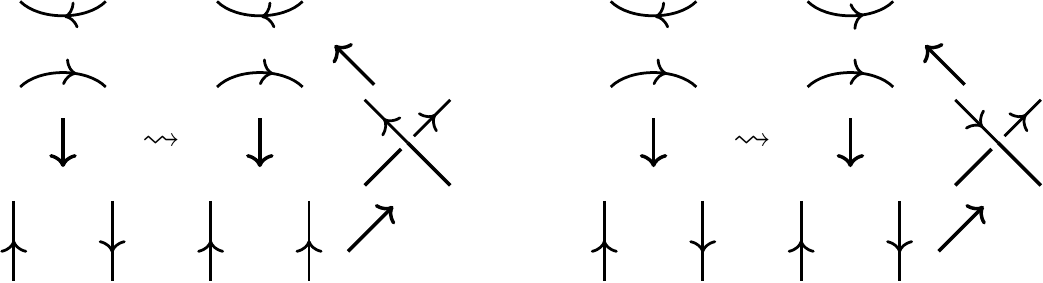}
	\captionof{figure}{The two ways of completing a triple.}\label{orientationchange}
\end{center}

The main ingredient of the procedure is the reversal of the orientation of a component, therefore we must know how the reduced Khovanov homology behaves with respect to this operation.

\begin{proposition}\label{proporchange}
	Let $D$ be an pointed diagram. If $D^r$ is obtained from $L$ by reversing the orientation of the $r$th component, then there are isomorphisms
	\[
	\begin{array}{l}
	\khr^{i,j}(D^r) \cong \khr^{i+2l_r, j+6l_r}(D), \\
	\\
	\khr^{i}_{\delta}(D^r) \cong \khr^{i+2l_r}_{\delta +2l_r}(D),
	\end{array}
	\]
	where $l_r = \sum\limits_{i \neq r} lk(L_r, L_i)$ and $L_i$ is the $i^{th}$ component of $D$.
\end{proposition}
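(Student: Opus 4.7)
The plan is to exploit the fact that the Khovanov chain complex depends on orientations only through the shifts governed by $n_+(D)$ and $n_-(D)$. Reversing the orientation of a single component leaves the underlying unoriented diagram, its crossings, smoothings, and the elementary maps $m_{A,B},\Delta_{A,B}$ completely unchanged; only the bookkeeping of the grading shifts moves. The same observation applies to the reduced chain complex, which is defined intrinsically as the image of $x_\bullet$ and is therefore insensitive to the choice of orientation.

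First I would compare $n_\pm(D)$ and $n_\pm(D^r)$. A crossing of $D$ flips sign on passing to $D^r$ if and only if exactly one of its two strands belongs to $L_r$. For each $i\neq r$, the number of positive minus negative crossings between $L_r$ and $L_i$ equals $2\,lk(L_r,L_i)$. Summing over $i\neq r$ and using that reversing $L_r$ swaps the positive and negative crossings shared by $L_r$ with other components, one obtains
\[
n_-(D^r)-n_-(D)=2l_r, \qquad n_+(D^r)-n_+(D)=-2l_r.
\]

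Next I would compare the gradings on the common underlying vector spaces. For any $A\subset\chi_D=\chi_{D^r}$, the smoothing $s_A$, the algebra $\Lambda V_A$, and the differentials $d_{A,B}$ are all intrinsic to the unoriented diagram. An element of $\Lambda V_A$ has homological degree $i=|A|-n_-(D)$ and quantum degree $j=|s_A|-2k+|A|+n_+(D)-2n_-(D)$ with respect to $D$; with respect to $D^r$ the same element acquires
\[
i^r = i - 2l_r, \qquad j^r = j + \bigl(n_+(D^r)-n_+(D)\bigr) - 2\bigl(n_-(D^r)-n_-(D)\bigr) = j - 6l_r.
\]
The identity map on chain vector spaces therefore realises $\widetilde{C}^{i,j}(D^r)=\widetilde{C}^{i+2l_r,\,j+6l_r}(D)$, and this is compatible with the restriction to the $x_\bullet$-subcomplex since the latter is defined purely from the unoriented picture. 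Taking homology yields the first claimed isomorphism $\widetilde{Kh}^{i,j}(D^r)\cong\widetilde{Kh}^{i+2l_r,\,j+6l_r}(D)$.

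Finally, for the $\delta$-graded version I would simply substitute $\delta=j-2i$: shifting $i\mapsto i+2l_r$ and $j\mapsto j+6l_r$ shifts $\delta$ by $6l_r-4l_r=2l_r$, giving $\widetilde{Kh}^{i}_{\delta}(D^r)\cong\widetilde{Kh}^{i+2l_r}_{\delta+2l_r}(D)$. The only subtlety worth checking carefully is the linking-number bookkeeping in the first step; no deeper obstacle arises because, once orientations are forgotten, the construction of the Khovanov complex is visibly rigid under component-orientation reversal.
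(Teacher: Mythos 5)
Your proposal is correct and takes essentially the same approach as the paper: the paper's proof also observes that the underlying chain complex is orientation-independent so the isomorphism is the identity map, and then defers the bi-degree $(2l_r,6l_r)$ to a "straightforward computation" from the definitions, which is exactly the $n_\pm$ and grading bookkeeping you carry out explicitly.
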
 

The isomorphism in question is in fact just the identity. However, reversing an orientation changes the crossings and thus the degrees. This means the identity has (a priori) a non trivial bi-degree. It is a straightforward computation from the definitions of the homological and quantum degrees to show that $|id|=(2l_r,6l_r)$. The $\delta$-graded version follows since $\delta=j-2i$.

\begin{lemma}\label{homalg}
	Let $E,F,G$ be a diagram such that 
	\[
	\khr^{\ast,\ast}(E) \cong \khr^{\ast,\ast}(F)[a_F,b_F] \oplus \khr^{\ast,\ast}(G)[a_G,b_G].
	\]
	If a diagram $D$ admits a crossing $c$ whose associated exact triple is either $(F,E,G)$ or $(G,E,F)$, then the boundary map in the long exact sequence is surjective.
\end{lemma}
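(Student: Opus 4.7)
The plan is a dimension count inside the long exact sequence of the triple, using the hypothesis on $\khr^{\ast,\ast}(E)$ to pin down the ranks of all connecting homomorphisms. Assume without loss of generality that the triple is $(F,E,G)$; the symmetric case is handled identically by swapping the roles of $F$ and $G$. The long exact sequence for fixed $j$ reads
\[
\cdots \to \khr^{i-1,j}(G)[w_-,3w_-+1] \xrightarrow{\partial} \khr^{i,j}(F)[w_+,3w_+-1] \xrightarrow{f} \khr^{i,j}(E) \xrightarrow{g} \khr^{i,j}(G)[w_-,3w_-+1] \xrightarrow{\partial} \cdots
\]
and at each bidegree exactness yields
\[
\dim \khr^{i,j}(F)[w_+,3w_+-1] + \dim \khr^{i,j}(G)[w_-,3w_-+1] - \dim \khr^{i,j}(E) = \dim\operatorname{im}\partial_{i-1,j} + \dim\operatorname{im}\partial_{i,j}.
\]

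Next, I would sum this identity over all $(i,j)$. Since grading shifts preserve total dimension, the left-hand side collapses to $\dim\khr^{\ast,\ast}(F) + \dim\khr^{\ast,\ast}(G) - \dim\khr^{\ast,\ast}(E)$, which vanishes by the direct sum hypothesis. Hence $\sum_{i,j}\dim\operatorname{im}\partial_{i,j}=0$, so every connecting homomorphism is zero wherever both its source and target are nonzero, and the long exact sequence degenerates into a family of short exact sequences
\[
0 \to \khr^{i,j}(F)[w_+,3w_+-1] \xrightarrow{f} \khr^{i,j}(E) \xrightarrow{g} \khr^{i,j}(G)[w_-,3w_-+1] \to 0
\]
at each bidegree $(i,j)$, with $f$ injective and $g$ surjective everywhere.

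Finally, to extract the asserted surjectivity of the boundary map, I would compare the long exact sequence shifts $(w_\pm,3w_\pm\pm 1)$ against the decomposition shifts $(a_F,b_F)$ and $(a_G,b_G)$. The direct sum structure of $\khr^{\ast,\ast}(E)$, taken bidegree by bidegree, identifies which summand arises from $\khr(F)$ and which from $\khr(G)$; in those bidegrees where the codomain of $\partial$ is forced to be trivial by the mismatch between these two sets of shifts, the boundary map is automatically surjective, and the vanishing established above shows there is no further obstruction. I expect the main obstacle to be precisely this final shift bookkeeping: one must verify that the hypothesis is consistent with the long exact sequence so that the argument closes cleanly, and this is the step that encodes how the decomposition of $\khr^{\ast,\ast}(E)$ really constrains the triple.
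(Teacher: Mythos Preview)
Your dimension count is set up correctly for the triple $(F,E,G)$ with $E$ in the middle, and it does yield $\sum_{i,j}\operatorname{rank}\partial_{i,j}=0$, hence $\partial\equiv 0$. The problem is that this is the \emph{opposite} of what the lemma asserts: a zero map is surjective only when its target vanishes. Your final paragraph tries to rescue the conclusion by a shift comparison, but no amount of bookkeeping on the shifts can turn $\partial=0$ into ``$\partial$ surjective'' onto a nonzero $\khr(F)$.

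The underlying issue is that the lemma, as written, has a typo. If you inspect the two applications (Lemmas~\ref{Alg3} and~\ref{4surj}), the diagram whose homology splits---your $E$---is always $D_0$ in the triple, not the middle term; the middle term is what you have called $G$. With the intended triple $(F,G,E)$ (so $D_1=F$, $D=G$, $D_0=E$) and boundary $\partial:\khr(E)\to\khr(F)$, the same alternating dimension identity reads
\[
\dim\khr^{i,j}(F)[\cdot]-\dim\khr^{i,j}(G)[\cdot]+\dim\khr^{i,j}(E)[\cdot]=\operatorname{rank}\partial_{i-1,j}+\operatorname{rank}\partial_{i,j}.
\]
Summing and using $\dim\khr(E)=\dim\khr(F)+\dim\khr(G)$ now gives $\sum_{i,j}\operatorname{rank}\partial_{i,j}=\dim\khr(F)$. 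Since each $\operatorname{rank}\partial_{i,j}$ is bounded above by the dimension of its target $\khr^{i+1,j}(F)[\cdot]$, equality in the sum forces equality termwise, i.e.\ $\partial$ is surjective in every bidegree. That is the ``standard homological algebra'' the paper alludes to. Your instinct to count dimensions was right; you just need to place $E$ at the end of the triple and close with the pointwise rank inequality rather than the shift discussion.
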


\begin{proof}
	The proof is standard homological algebra.
\end{proof}

\begin{lemma}\label{triplethin}
	Let $(D_1,D,D_0)$ be an exact triple of thin diagrams, and let $[a_1,b_1],[a_0,b_0]$ be the associated shifts. Then the following holds:
	\begin{enumerate}[(i)]
		\item{If $w(\khr^{\ast}_{\ast}(D_1)[a_1,b_1]\oplus \khr^{\ast}_{\ast}(D_0)[a_0,b_0]) = 1$, then the boundary map is zero.}
		\item{If $w(\khr^{\ast}_{\ast}(D_1)[a_1,b_1]\oplus \khr^{\ast}_{\ast}(D_0)[a_0,b_0]) = 2$ and  $\dim (\khr^{\ast}_{\ast}(D_1)) \geq \dim(\khr^{\ast}_{\ast}(D_0))$, then the boundary map is injective.}
		\item{If $w(\khr^{\ast}_{\ast}(D_1)[a_1,b_1]\oplus \khr^{\ast}_{\ast}(D_0)[a_0,b_0]) = 2$ and  $\dim (\khr^{\ast}_{\ast}(D_1)) \leq \dim(\khr^{\ast}_{\ast}(D_0))$, then the boundary map is surjective.}
	\end{enumerate}
\end{lemma}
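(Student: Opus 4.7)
The plan is to exploit the bi-degree $(1,-2)$ of the boundary map in the $(i,\delta)$-grading against the thinness assumption. Since $\delta = j - 2i$ and the long exact sequence fixes the quantum degree $j$ while raising $i$ by one, the boundary $\partial$ drops $\delta$ by two; meanwhile thinness forces each of $\khr^{\ast}_{\ast}(D_0)[a_0,b_0]$ and $\khr^{\ast}_{\ast}(D_1)[a_1,b_1]$ to be concentrated in a single $\delta$-value, call them $\delta_0$ and $\delta_1$ respectively. All three claims will then reduce to a short dimension count.

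For (i), width $1$ means $\delta_0 = \delta_1 =: \delta_\ast$, so the image of $\partial$ would have to land in $\delta$-grading $\delta_\ast - 2$, where the codomain is trivial; hence $\partial = 0$.

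For (ii) and (iii), width $2$ forces $|\delta_0 - \delta_1| = 2$, and I would split on the sign. In the sub-case $\delta_0 = \delta_1 + 2$, the map $\partial$ is a legitimate map between the two non-trivial strata, and the homology of the total complex, which equals $\khr^{\ast}_{\ast}(D)$, decomposes as $\ker \partial \oplus \operatorname{coker} \partial$ living in $\delta_0$ and $\delta_1$ respectively. Thinness of $D$ forces one of these to vanish, and the hypothesis of (ii) (resp.\ (iii)) together with rank--nullity singles out which: for (ii), if $\operatorname{coker} \partial = 0$ then surjectivity gives $\dim \khr^{\ast}_{\ast}(D_0) \geq \dim \khr^{\ast}_{\ast}(D_1)$, which combined with the assumed reverse inequality forces equality and hence makes $\partial$ an isomorphism (so injective); otherwise $\ker \partial = 0$ outright. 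Case (iii) is symmetric.

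In the opposite sub-case $\delta_1 = \delta_0 + 2$, the bi-degree alone forces $\partial = 0$, the long exact sequence splits, and $\khr^{\ast}_{\ast}(D)$ decomposes as $\khr^{\ast}_{\ast}(D_0)[a_0,b_0] \oplus \khr^{\ast}_{\ast}(D_1)[a_1,b_1]$, living in two distinct $\delta$-values; thinness forces one summand to vanish, and the dimension hypothesis in (ii) then forces $\khr^{\ast}_{\ast}(D_0) = 0$, so $\partial$ is trivially injective; (iii) is again symmetric. The main obstacle I expect is bookkeeping: keeping the two sub-cases for each of (ii) and (iii) disjoint and coherent, and correctly absorbing the degenerate regime where the dimensions coincide and $\partial$ happens to be an isomorphism. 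A uniform formulation along the lines of ``$\partial$ is rank-maximal between its one-dimensional $\delta$-strata'' should let (ii) and (iii) be dispatched in a single statement once the sub-case analysis is in place.
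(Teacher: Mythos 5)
Your argument is correct and follows essentially the same route as the paper: use the $(1,-2)$ bi-degree against thinness for (i), and for (ii)--(iii) observe that width $2$ with two thin pieces forces two distinct $\delta$-diagonals, so thinness of $D$ kills either $\ker\partial$ or $\operatorname{coker}\partial$, and the dimension hypothesis then singles out which. One small remark: your second sub-case $\delta_1 = \delta_0 + 2$ is vacuous rather than merely degenerate --- there $\partial = 0$ would force $\khr^{\ast}_{\ast}(D)$ to be a direct sum supported in two diagonals, so thinness of $D$ forces one summand to be $0$, which directly contradicts the width-$2$ hypothesis; you need not reach for the dimension inequality to dispatch it.
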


\begin{proof}
	For $(i)$, recall that the differential has degree $(1,-2)$. Under the thinness assumption, the differential must vanish. For $(ii)$ and $(iii)$, we start with the following observations: if $w(\khr^{\ast}_{\ast}(D_1)[a_1,b_1]\oplus \khr^{\ast}_{\ast}(D_0)[a_0,b_0]) = 2$, with $D_1$ and $D_0$ thin, then the two homologies must be supported in two different diagonals. Since $D$ is also thin, one the two diagonals must vanish. It follows that the boundary map is either injective or surjective. The hypotheses relating the dimensions of $\khr^{\ast}_{\ast}(D_1)$ and $\khr^{\ast}_{\ast}(D_0)$ allow us to decide when it is either.
\end{proof}

\begin{center}
	\includegraphics[scale=0.65]{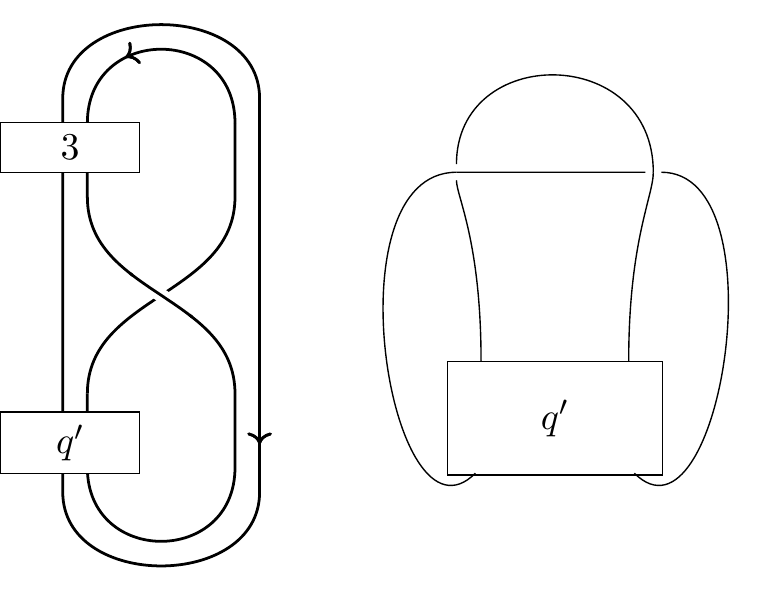}
	\captionof{figure}{Two diagrams for the alternating knot $L_{q'}$ ($q'$ odd) in the proof of lemma \ref{2fusion}.}\label{Twist}
\end{center}

\begin{lem}
	For any $q \geq 3$, the fusion maps
	\[
	\begin{aligned}
	\widetilde{\Sigma}_{2,q}^2: \widetilde{Kh}^{\ast}_{\ast}(T_{2,2}) \otimes \widetilde{Kh}^{\ast}_{\ast}(T_{2,q}) \longrightarrow \widetilde{Kh}^{\ast}_{\ast}(T_{2,2+q}), \\
	\widetilde{\Sigma}_{3,q}^2: \widetilde{Kh}^{\ast}_{\ast}(T_{2,3}) \otimes \widetilde{Kh}^{\ast}_{\ast}(T_{2,q}) \longrightarrow \widetilde{Kh}^{\ast}_{\ast}(T_{2,3+q})
	\end{aligned}
	\]
	are surjective.
\end{lem}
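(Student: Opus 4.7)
The plan is to fit the fusion map into an exact triple using the completing-the-triple technique of this section, and then deduce surjectivity from Lemma \ref{triplethin} (iii). For $p = 2$, the fusion map $\widetilde{\Sigma}^2_{q,q'}$ factors as the connected sum isomorphism $S^\ast$ of Lemma \ref{CSiso} followed by a single $1$-handle map $\phi: \khr^{\ast}_{\ast}(T_{2,q}\sharp T_{2,q'}) \to \khr^{\ast}_{\ast}(T_{2,q+q'})$, so it suffices to show that $\phi$ is surjective. I would apply the procedure of Figure \ref{orientationchange} to this $1$-handle; the appropriate case of the figure is determined by the parities of $q$ and $q'$, which dictate whether the two fusion strands lie in the same or in different components of $T_{2,q}\sharp T_{2,q'}$. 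In each of the four resulting parity subcases for $q' \in \{2,3\}$, the procedure produces an exact triple $(D_1, D, D_0)$ whose smoothings recover, up to a reversal of orientation on one component, the source $T_{2,q}\sharp T_{2,q'}$ and the target $T_{2,q+q'}$; a short diagrammatic argument identifies the new diagram $D$ topologically with a $2$-strand torus link $T_{2,k}$ via a Reidemeister II cancellation of the added crossing against an adjacent one. All three diagrams in the triple are therefore thin, since every $T_{2,n}$ is thin by Example \ref{2strandedlimit} and $\khr^{\ast}_{\ast}(T_{2,q}\sharp T_{2,q'})$ is thin via $S^\ast$.

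The core of the proof is a width computation. I would compute the numbers $n_{\pm}$ for $D, D_0, D_1$, correct the $\delta$-grading on the connected-sum side by the linking-number shift prescribed by Proposition \ref{proporchange} in order to account for the orientation reversal of one component, and verify that in every subcase the shifted $\delta$-supports of $\khr^{\ast}_{\ast}(D_0)$ and $\khr^{\ast}_{\ast}(D_1)$ end up on two adjacent diagonals, so that the total width of the shifted sum equals exactly $2$. Together with the elementary dimension bound $\dim \khr^{\ast}_{\ast}(T_{2,q+q'}) = q + q' \leq q\cdot q' = \dim \khr^{\ast}_{\ast}(T_{2,q}\sharp T_{2,q'})$, valid for $q \geq 3$ and $q' \in \{2,3\}$, this is precisely the hypothesis of Lemma \ref{triplethin} (iii), which then yields surjectivity of the boundary map of the total exact sequence.

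Since this boundary is induced by the saddle cobordism between $D_0$ and $D_1$, it coincides with $\phi$ up to the orientation-reversal isomorphisms of Proposition \ref{proporchange}, so $\phi$, and hence $\widetilde{\Sigma}^2_{q,q'}$, is surjective. The main obstacle is the parity-by-parity bookkeeping: simultaneously tracking crossing signs and linking numbers under the orientation reversal in each of the four subcases, in order to confirm that the total width is exactly $2$ in every case. Once this verification is in place, the rest of the argument is essentially formal.
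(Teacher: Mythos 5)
Your overall strategy --- complete the triple, use thinness of the pieces, and invoke Lemma~\ref{triplethin} together with a dimension count --- is the right one, and it is essentially what the paper does. But there is a concrete error that would derail the second half of the argument. Your identification ``$D$ is topologically a $2$-strand torus link $T_{2,k}$ via a Reidemeister~II cancellation'' is correct for the first map $\widetilde{\Sigma}^2_{2,q}$ (where the paper finds $D \cong T_{2,q-2}$), but it is \emph{false} for $\widetilde{\Sigma}^2_{3,q}$: completing the triple for the $1$-handle from $T_{2,3}\sharp T_{2,q}$ to $T_{2,q+3}$ produces the twist knot $L_{q-2}$ of Figure~\ref{Twist}, which is not a $2$-strand torus link and in particular admits no Reidemeister~II cancellation that would turn it into one. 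This matters in two ways. First, your justification that $D$ is thin (via Example~\ref{2strandedlimit}, which is only about torus links) breaks down; the paper instead uses the fact that $L_{q-2}$ is alternating and hence thin by Lee's theorem. Second, and more seriously, your plan implicitly relies on knowing $\dim \khr^{\ast}_{\ast}(D)$ to control the total exact sequence; with the wrong $D$ you would be working with the wrong dimension.

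There is also a methodological gap worth flagging: to apply Lemma~\ref{triplethin}(iii) you must first establish that the shifted widths yield exactly $2$ rather than $1$, and you describe this only as a bookkeeping task. The paper does not in fact establish ``width $= 2$'' by a direct shift computation for the second family. Instead it argues by contradiction: it shows by a skein induction that $\dim \khr^{\ast}_{\ast}(L_q) \leq 2q-1$, so the boundary map cannot be zero (which would force $\dim\khr^{\ast}_{\ast}(L_{q'-2}) = 4q'+3$, contradicting the bound), and it rules out injectivity by the size inequality $\dim(T_{2,3}\sharp T_{2,q'}) > \dim(T_{2,q'+3})$. In other words, the paper's width-$2$ fact for the second map is a \emph{consequence} of a nontrivial dimension estimate on $\khr^{\ast}_{\ast}(L_q)$, not an independent input. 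Your proposal collapses this to a shift calculation you have not carried out, while simultaneously carrying the wrong model for $D$. If you fix the identification of $D$, use Lee's theorem for its thinness, and either do the explicit shift verification or replace it with the paper's dimension bound on $\khr^{\ast}_{\ast}(L_q)$, the argument closes.
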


\begin{proof}
Let us treat the two cases separately. The maps in both cases are induced by oriented $1$-handle moves. Therefore we shall first "complete the triples".
For the first map, the movie begins with $F = T_{2,2}\sharp T_{2,q'}$ and $F' = T_{2,q'+2}$. The two strands of $F$ involved in the procedure don't belong to the same component, so we modify $F$. The triple is completed by a diagram $D$ equivalent to a negative torus link $T_{2,q'-2}$. The dimensions if these homologies are related by the formula:
\[
\dim(\khr^{\ast}_{\ast}(T_{2,2}\sharp T_{2,q'})) = 2q' = (q'-2) + (q'+2) =\dim{\khr^{\ast}_{\ast}(T_{2,q'-2})} + \dim(\khr^{\ast}_{\ast}(T_{2,q'+2})).
\]
Thus, for dimensional reasons the map $\Sigma^2_{2,q'}$ must be surjective.

The second map is induced by a movie starting at $F = T_{2,3}\sharp T_{2,q'}$ and ending at $F' = T_{2,q'+3}$. If $q'$ is even, then the strands in $F$ involved in the procedure belong to different components, so we modify $F$. If $q'$ is odd, one should modify $F'$ instead. In both cases, the triple is completed by a diagram $D$ equivalent to the knot $L_{q'-2}$ of Figure \ref{Twist}. All diagrams involved in the exact triple are alternating thus thin by Lee \cite{lee2008khovanov}. So from Lemma \ref{triplethin}, the boundary map is either $0$, surjective or injective. First notice that it cannot be injective since, for $q' \geq 2$:
\[
\dim(\khr^{\ast}_{\ast}(T_{2,3}\sharp T_{2,q'})) = 3q' > q' + 3 =\dim(\khr^{\ast}_{\ast}(T_{2,q'+3}))
\]
It remains to show that the boundary map cannot be zero. First we bound the dimension of the homology of $L_q$. If $q=0$, $L_0$ is an unknot $U$. Assume $q \geq 1$, consider the exact triple $(D_1, D, D_0)$ associated to one of the $q$ crossings. The diagram $D_0$ is equivalent to a $T_{2,2}$ torus link, whose homology has dimension $2$. For any $q \geq 1$, we have the inequality
\[
\dim(\khr^{\ast}_{\ast}(L_q)) \leq 2 + \dim(\khr^{\ast}_{\ast}(L_{q-1})),
\]
from which one extracts
\[
\dim(\khr^{\ast}_{\ast}(L_q)) \leq 2(q-1) + 1 = 2q - 1
\]
We shall prove that the boundary is non zero by contradiction. Let us assume that it is. Then we have
\[
4q' + 3 =\dim(\khr^{\ast}_{\ast}(T_{2,3}\sharp T_{2,q'})) + \dim(\khr^{\ast}_{\ast}(T_{2,q'+3})) = \dim(\khr^{\ast}_{\ast}(L_{q'-2})) \leq 2(q'-2) -1.
\]
The assumption gives the second equality. The inequality does not hold, the map $\Sigma^2_{3,q'}$ is non zero. It must then be surjective.
\end{proof}

\begin{lemma}\label{Alg3}
	For all $N\geq 1$, the map induced by the $3$-frame movie in Figure \ref{3movie}
	\[
	\Phi_N: \khr_{\ast}^{\ast}(E) \otimes \khr_{\ast}^{\ast}(U \coprod U) \longrightarrow \khr_{\ast}^{\ast}(U \coprod U),
	\]where $E$ is a $T_{3,3}$ with a change of orientation, is surjective.
\end{lemma}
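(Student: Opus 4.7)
The plan is to factor the $3$-frame movie into its three constituent $1$-handle moves and verify surjectivity of each induced map, then compose. By Lemma \ref{CSiso}, the map $\Phi_N$ agrees, up to the isomorphism $S^{\ast}$, with the chain map induced by the movie $E\sharp(U\coprod U) \to U\coprod U$, so it suffices to work with this composite. Writing the movie as $\phi_3\circ\phi_2\circ\phi_1$, the intermediate diagrams $D^1,D^2$ can be read off the fusion movie of Figure \ref{FusionCob}; each is a connected sum or split union of $T_{2,k}$'s with unknots, all of whose reduced Khovanov homologies are already known.

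To treat a single map $\phi_i:\khr^{\ast}_{\ast}(D^{i-1}) \to \khr^{\ast}_{\ast}(D^i)$, I would apply the \emph{completing the triple} procedure described at the start of Section 5. Depending on whether the two strands involved in the handle lie in the same component or not, one reverses an orientation (using Proposition \ref{proporchange} to track the resulting degree shift) and inserts one crossing, obtaining an exact triple in which $\phi_i$ appears as the boundary map of the associated long exact sequence. When all three diagrams in the triple are alternating, and hence thin by Lee's theorem, Lemma \ref{triplethin}(iii) reduces surjectivity of the boundary map to a dimension inequality between $\khr^{\ast}_{\ast}$ of the outer terms; I expect this to succeed for whichever handles only touch $T_{2,k}$ or unknot pieces.

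For the step where the $T_{3,3}$ factor intervenes, thinness fails (Proposition \ref{332q} exhibits $T_{3,3}$ as thick), and I would instead check the hypothesis of Lemma \ref{homalg} by hand: compute $\khr^{\ast}_{\ast}$ of the completing diagram using its own exact triple, Proposition \ref{332q}, and the monoidality provided by Lemma \ref{CSiso}, then match it with a shifted direct sum of the homologies of the other two diagrams, tracking the orientation-reversal shift via Proposition \ref{proporchange}. Composition of the three surjections yields the claim.

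The main obstacle is the thick step: identifying the completing diagram explicitly, computing its homology, and aligning the $\delta$-grading shifts produced by the change of orientation on $E$. The decomposition of the movie and the thin-triple steps should be routine once this is settled.
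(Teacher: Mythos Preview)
Your overall strategy matches the paper's: decompose the movie into its individual $1$-handle maps, show each is surjective, and treat the step involving the thick link $T_{3,3}$ via completing-the-triple together with Lemma~\ref{homalg}. There is one concrete error and one simplification you are missing.

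The error: a $3$-frame movie consists of three diagrams and hence \emph{two} $1$-handle moves, not three. The paper writes $\Phi_N=\Phi_2^N\circ\Phi_1^N$ with the single intermediate diagram $M_2^N=U\coprod U\coprod U$; your factorization $\phi_3\circ\phi_2\circ\phi_1$ with two intermediates $D^1,D^2$ does not correspond to the movie in Figure~\ref{3movie}.

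The simplification: the second map $\Phi_2^N:\khr^{\ast}_{\ast}(U\coprod U\coprod U)\to\khr^{\ast}_{\ast}(U\coprod U)$ realizes a connected sum (it merges two of the three unknotted circles), so it is surjective immediately by Lemma~\ref{CSiso}. No thin-triple or completing-the-triple argument is needed here; your plan via Lemma~\ref{triplethin} would work but is overkill.

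For $\Phi_1^N$ your plan coincides with the paper's. The completing diagram turns out to be $T_{2,4}\sharp(U\coprod U)$, independently of $N$, so the completed exact triple is
\[
\bigl((U\coprod U)\sharp(U\coprod U),\ T_{2,4}\sharp(U\coprod U),\ T_{3,3}\sharp(U\coprod U)\bigr).
\]
The relevant splitting is
\[
\khr^{\ast,\ast}(T_{3,3}) \cong \khr^{\ast,\ast}(U \coprod U)[-4,-11]\oplus\khr^{\ast,\ast}(T_{2,4})[0,-1],
\]
which is established in Example~\ref{T332q} (not Proposition~\ref{332q}); tensoring with $\khr^{\ast,\ast}(U\coprod U)$ via Lemma~\ref{CSiso} and invoking Lemma~\ref{homalg} then gives surjectivity of $\Phi_1^N$, exactly as you outline.
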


\begin{figure}
	\hspace*{-.125in}
	\includegraphics[scale=0.65]{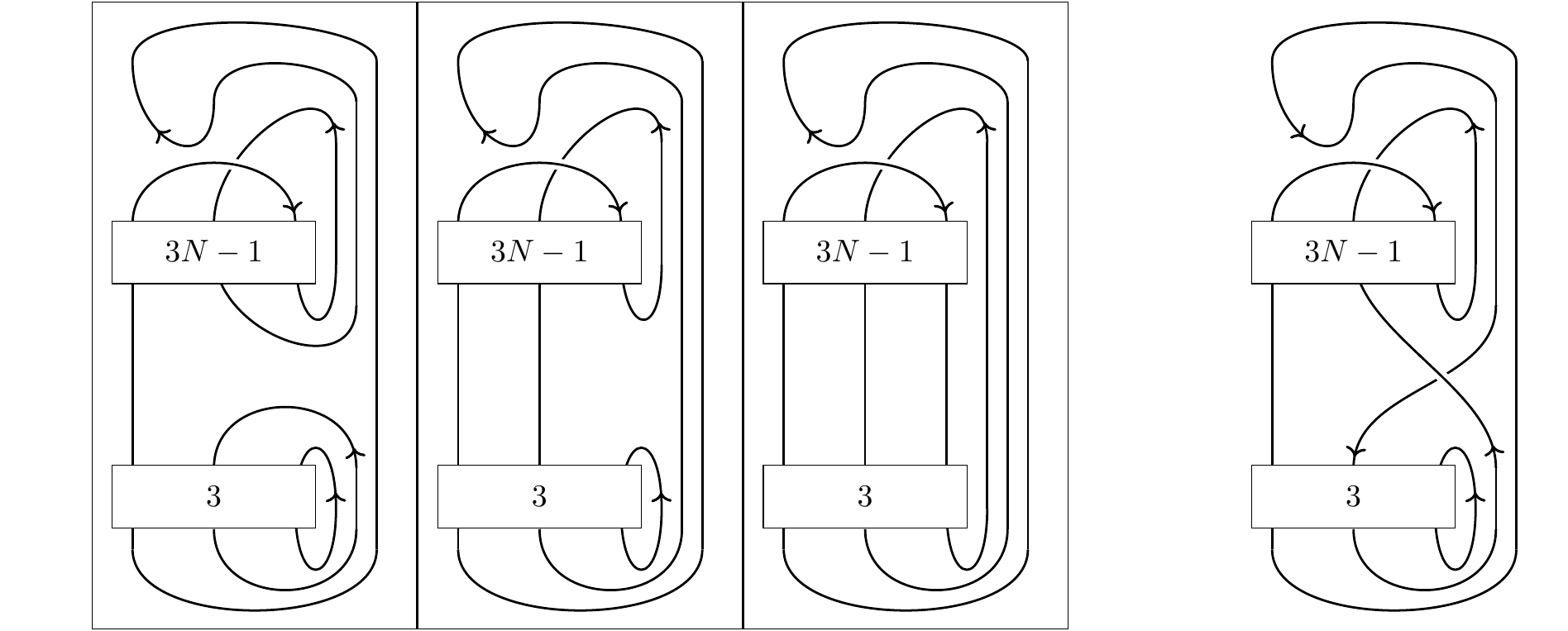}
	\captionof{figure}{On the left: the fusion movie for Lemma \ref{Alg3}. On the right: the completed link associated to the first $1$-handle move.}
	\label{3movie}
\end{figure}

\begin{proof}
	The diagrams involved in the movies can be easily described for any $N$:
	\[
	M_1^N = E \coprod U, \hspace{0,5cm} M_2^N = U\coprod U \coprod U, \hspace{0,5cm} M_3^N = U \coprod U.
	\]
	For any $N \geq 1$, the map $\Phi_N$ is defined as the composite:
	\[
	\Phi_N:\khr_{\ast}^{\ast}(E \sharp (U \coprod U)) \overset{\Phi_1^N}{\longrightarrow} \khr_{\ast-1}^{\ast}(U\coprod U \coprod U) \overset{\Phi_2^N}{\longrightarrow} \khr_{\ast-2}^{\ast}(U\coprod U).
	\]
	To prove that $\Phi_N$ is surjective, it is enough to show that both maps $\Phi_1^N$ and $\Phi^N_2$ are surjective. The map $\Phi_2^N$ is surjective as it realizes a connected sum. We now show that $\Phi_1^N$ is surjective. We  use our completing the triple procedure. First step is to reverse the orientation of one strand of $E$, turning it into a $T_{3,3}$. The complete link is easily shown to be isotopic to a $T_{2,4} \coprod U$ for any $N \geq 1$. Thus the completed exact triple for the $1$-handle move we consider is the same for all $N$, namely: 
	\[
	(M_2^N= U \coprod U \coprod U=  (U\coprod U) \sharp (U\coprod U) , T_{2,4}\sharp (U\coprod U), T_{3,3}\sharp (U\coprod U) ).
	\]
	Recall from Example \ref{T332q} that we have an isomorphism
	\[
	\begin{array}{lcl}
	\khr^{\ast,\ast}(T_{3,3}) &\cong& \khr^{\ast,\ast}(U \coprod U)[-4,-11] \bigoplus \khr^{\ast,\ast}(T_{2,4})[0,-1], \\
	\end{array}
	\]
	from which we can deduce the sequence of isomorphisms
	\[
	\begin{array}{lcl}
	\khr^{\ast,\ast}((U\coprod U)\sharp T_{3,3}) & \cong & \khr^{\ast,\ast}((U\coprod U))\otimes \khr^{\ast,\ast}(T_{3,3}) \\
	&\cong & \khr^{\ast,\ast}((U\coprod U))\otimes \left(\khr^{\ast,\ast}(U \coprod U)[-4,-11] \bigoplus \khr^{\ast,\ast}(T_{2,4})[0,-1]\right) \\
	&\cong & \left(\khr^{\ast,\ast}((U\coprod U))\otimes\khr^{\ast,\ast}(U \coprod U)[-4,-11]\right) \\
	& &\bigoplus \left(\khr^{\ast,\ast}((U\coprod U))\otimes \khr^{\ast,\ast}(T_{2,4})[0,-1]\right) \\
	&\cong & \khr^{\ast,\ast}((U\coprod U)\sharp(U \coprod U))[-4,-11] \bigoplus \khr^{\ast,\ast}((U\coprod U)\sharp T_{2,4})[0,-1].
	\end{array}
	\]
	This isomorphism, our exact triple and Lemma \ref{homalg}  allow us to conclude that $\varphi_1^N$ is indeed surjective. This concludes the proof.
\end{proof}

In order to prove our last statement, we need to understand the total exact sequence for $T_{4,4}$, with respect to our usual choice of topmost, leftmost crossing. The associated exact triple is
$(D_1=F,T_{4,4},U \coprod T_{2,2})$. Hence the grids

	\hspace*{-1.95in}
	\includegraphics[scale=0.75]{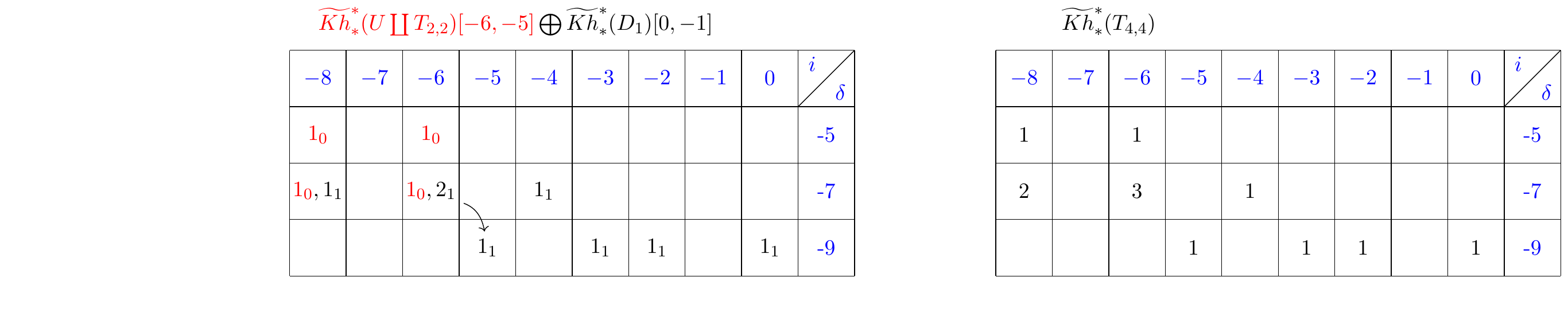}

Comparing the two grids, it is clear that the differential is identically zero, so we have an isomorphism
	\[
	\khr^{\ast}_{\ast}(T_{4,4}) \cong \khr^{\ast}_{\ast}((U\coprod T_{2,2})) \bigoplus \khr^{\ast}_{\ast}(F).
	\]

Our last order of business is to study the map induced by the movie below.

\begin{figure}
	\hspace*{-.5in}
	\includegraphics[scale=0.65]{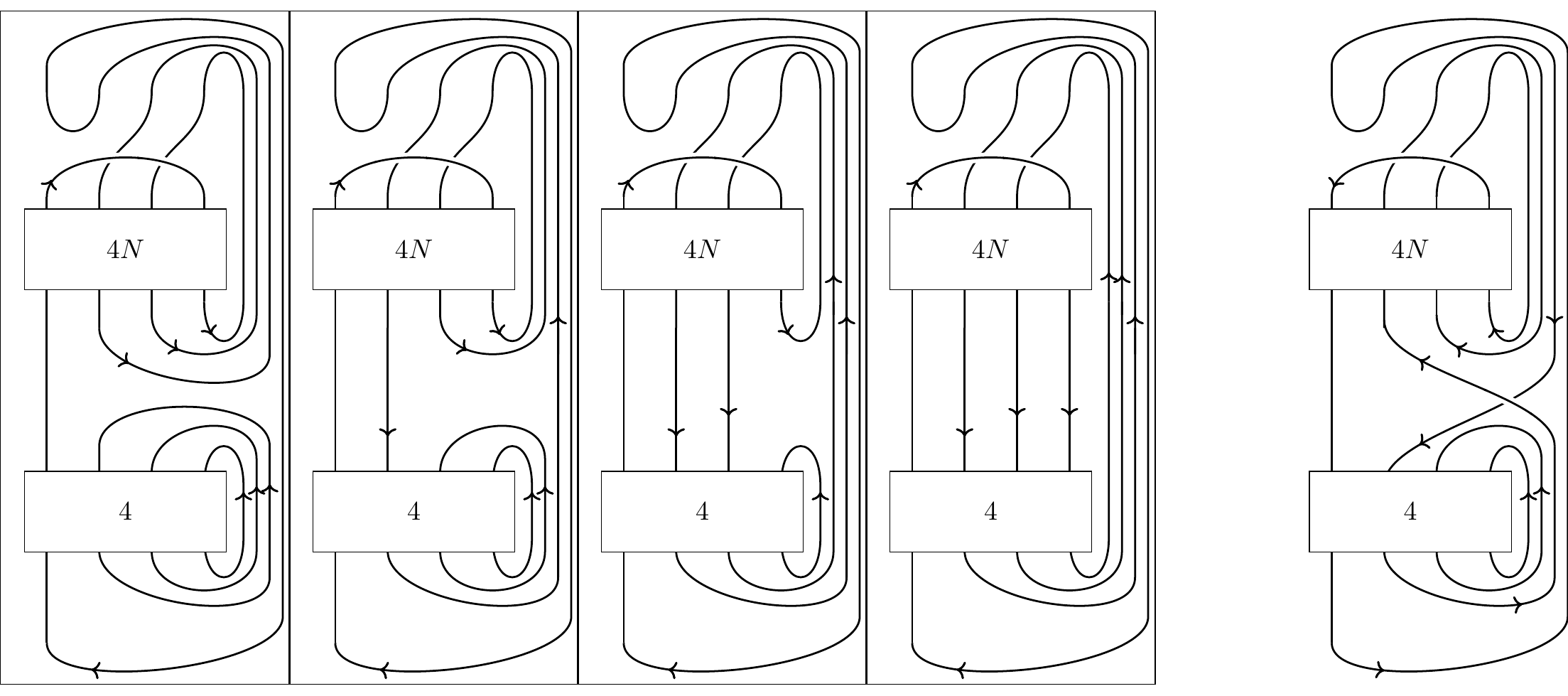}
	\captionof{figure}{On the left: the fusion movie for Lemma \ref{4surj}. On the right: the completed link associated to the first $1$-handle move.}
	\label{4movie}
\end{figure}

\begin{lemma}\label{4surj}
	The map
	\[
	\begin{aligned}
	\Sigma_{4N+1}: \widetilde{Kh}^{\ast}_{\ast}((T_{2,2N}\sharp D) \longrightarrow \widetilde{Kh}^{\ast}_{\ast}(T_{2,2(N+1)}),
	\end{aligned}
	\]
	induced by the movie in Figure \ref{4movie}, where $D$ is $T_{4,4}$ with a different orientation, are surjective.
\end{lemma}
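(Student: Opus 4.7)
The plan is to imitate the argument of Lemma \ref{Alg3} closely. First I would factor the movie in Figure \ref{4movie} into its successive $1$-handle moves,
\[
\Sigma_{4N+1} = \Phi_k \circ \cdots \circ \Phi_2 \circ \Phi_1,
\]
and show that each factor $\Phi_i$ is surjective; surjectivity of the composite then follows. Any $\Phi_i$ that realizes a merge of two circles belonging to distinct components of a connected sum is surjective by the same standard connected-sum argument used for $\Phi_2^N$ in the proof of Lemma \ref{Alg3}, so those moves can be dispatched immediately via Lemma \ref{CSiso}.

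The delicate step is the first $1$-handle move $\Phi_1$, i.e.\ the one that actually attacks the factor $D$. Because the two strands involved in this handle lie in different components of $T_{2,2N}\sharp D$, the completing the triple procedure of Section~5 reverses the orientation of one strand of $D$, which turns $D$ back into the standard $T_{4,4}$ at the cost of the grading shift prescribed by Proposition \ref{proporchange}. The resulting exact triple has middle term the completed link pictured on the right of Figure \ref{4movie}. I would then identify the $0$- and $1$-smoothings at the completed crossing with, respectively, $T_{2,2N}\sharp T_{4,4}$ and the target of $\Phi_1$ (or vice versa, depending on the sign of the crossing).

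The key input is the isomorphism
\[
\khr^{\ast}_{\ast}(T_{4,4}) \cong \khr^{\ast}_{\ast}(U\coprod T_{2,2}) \oplus \khr^{\ast}_{\ast}(F)
\]
derived immediately before the lemma. Combined with the monoidality of reduced Khovanov homology under connected sums (Lemma \ref{CSiso}), this yields a splitting
\[
\khr^{\ast}_{\ast}(T_{2,2N}\sharp T_{4,4}) \cong \khr^{\ast}_{\ast}(T_{2,2N}\sharp (U\coprod T_{2,2})) \oplus \khr^{\ast}_{\ast}(T_{2,2N}\sharp F).
\]
I would then verify that one of these two summands coincides (up to the shifts from Proposition \ref{proporchange}) with the homology of the other smoothing of the completed triple. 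With that identification in place, Lemma \ref{homalg} applies directly and forces the boundary map in the long exact sequence---which is $\Phi_1$ up to the shift---to be surjective.

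The main obstacle I expect is the bookkeeping: correctly identifying the two smoothings of the completed triple with the two summands in the decomposition of $\khr^{\ast}_{\ast}(T_{2,2N}\sharp T_{4,4})$, and propagating the grading shifts through the orientation reversal. This is the same kind of diagrammatic matching that makes Lemma \ref{Alg3} work; once it is established, the homological algebra of Lemma \ref{homalg} closes the argument mechanically.
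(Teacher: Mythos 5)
Your handling of the first $1$-handle move matches the paper exactly: complete the triple, reduce to the standard $T_{4,4}$ by an orientation reversal (Proposition \ref{proporchange}), feed the splitting $\khr^{\ast}_{\ast}(T_{4,4}) \cong \khr^{\ast}_{\ast}(U\coprod T_{2,2}) \oplus \khr^{\ast}_{\ast}(F)$ through Lemma \ref{CSiso}, and close with Lemma \ref{homalg}. That part of the proposal is sound.

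The gap is in how you dispose of the remaining moves. You assert that every subsequent handle ``realizes a merge of two circles belonging to distinct components of a connected sum'' and hence is surjective by the connected-sum argument of Lemma \ref{CSiso}. That argument only covers the specific saddle $\partial_\sharp$ that \emph{defines} the connected sum, where surjectivity comes for free from the isomorphism $S = \partial_\sharp\circ(1\otimes\nu)$. But the movie in Figure \ref{4movie} contains a move of a genuinely different type: the last $1$-handle is the $2$-stranded \emph{fusion} map $\Sigma^2_{2,2N+1}\colon \khr^{\ast}_{\ast}(T_{2,2}\sharp T_{2,2N+1}) \longrightarrow \khr^{\ast}_{\ast}(T_{2,2(N+1)+1})$. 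Its target is a prime torus link rather than a connected sum, and this handle is not the $\partial_\sharp$ of Lemma \ref{CSiso}. Its surjectivity is not automatic; it is exactly the content of Lemma \ref{2fusion}, which the paper proves separately in Section 5 via its own complete-the-triple argument (using thinness of the completed alternating diagrams and a dimension count, Lemma \ref{triplethin}). Without an explicit appeal to Lemma \ref{2fusion} for this step, the composite cannot be concluded to be surjective, so the proposal as written does not close. The fix is simply to recognize the final handle as a $2$-stranded fusion and cite Lemma \ref{2fusion}, as the paper does.
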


\begin{proof}
	The proof is split into $3$ pieces, each corresponding to one of the $3$ $1$-handle moves in the movie. First notice that the map induced by the second $1$-handle realizes a connected sum, so it is surjective. The third $1$-handle is just a $2$-stranded finite fusion map
	\[
	\Sigma_2: \khr^{\ast,\ast}(T_{2,2}\sharp T_{2,2N+1}) \longrightarrow \khr^{\ast,\ast}(T_{2,2(N+1)+1}),
	\]
	hence it is surjective. If we can prove that the map induced by the first handle move is surjective, then their composition, $\Sigma_{4N+1}$, will be surjective. We will use the complete the triple procedure, which results in an exact triple 
	\[
	((U\coprod T_{2,2})\sharp T_{2,2N+1}, F \sharp T_{2,2N+1},D\sharp T_{2,2N+1}),
	\]
	where $F$ is the diagram appearing in the computation of $	\khr^{\ast}_{\ast}(T_{4,4})$ above.	Thus in a similar fashion as the previous lemma, and since there is an isomorphism
	\[
	\khr^{\ast}_{\ast}(T_{4,4}) \cong \khr^{\ast}_{\ast}((U\coprod T_{2,2})) \bigoplus \khr^{\ast}_{\ast}(F).
	\]
	We get a sequence of isomorphisms
	\[
	\begin{array}{lcl}
	\khr^{\ast,\ast}(D\sharp T_{2,2N+1}) & \cong & \khr^{\ast,\ast}(T_{2,2N+1})\otimes \khr^{\ast,\ast}(D) \\
	&\cong & \khr^{\ast,\ast}(T_{2,2N+1})\otimes \left(\khr^{\ast,\ast}((U\coprod T_{2,2})) \bigoplus \khr^{\ast,\ast}(F) \right) \\
	&\cong & \left(\khr^{\ast,\ast}(T_{2,2})\otimes\khr^{\ast,\ast}(U\coprod T_{2,2})\right) \bigoplus\left(\khr^{\ast,\ast}(T_{2,2})\otimes \khr^{\ast,\ast}(F)\right) \\
	&\cong & \khr^{\ast,\ast}(T_{2,2N+1}\sharp(U\coprod T_{2,2}))\bigoplus \khr^{\ast,\ast}(T_{2,2N+1}\sharp F).
	\end{array}
	\]
	Again we call upon Lemma \ref{homalg} to conclude that the map induced by the $1$st $1$-handle is surjective.
\end{proof}
\bibliographystyle{amsalpha}
\bibliography{bibli}
\end{document}